\documentclass{amsart}

\usepackage{amsmath,amsfonts,amssymb}
\usepackage[colorlinks=true]{hyperref}
\hypersetup{urlcolor=blue, citecolor=blue}
\usepackage{geometry}
\usepackage{enumerate}
\usepackage{tikz}
\usepackage{graphicx}
\usepackage{subcaption}

\newtheorem{thm}{Theorem}[section]
\newtheorem{lem}[thm]{Lemma}
\newtheorem{prop}[thm]{Proposition}

\newtheorem{Def}[thm]{Definition}

\newtheorem{rem}[thm]{Remark}
\usepackage{stmaryrd}

\newcounter{cst}
\newcommand{\ctel}[1]{C_{\refstepcounter{cst}\label{#1}\thecst}}
\newcommand{\cter}[1]{C_{\ref{#1}}}

\def\be{\begin{equation}}
\def\ee{\end{equation}}
\def\eps{\epsilon}
\def\x{{\boldsymbol x}}
\def\d{{\rm d}}
\def\ov#1{\overline{#1}}

\def\bv{{\boldsymbol v}}

\def\bw{{\boldsymbol w}}

\def\bLambda{{\boldsymbol{\Lambda}}}
\def\0{{\bf 0}}
\def\1{{\bf 1}}
\def\p{{\partial}}
\def\bbA{{\mathbb A}}
\def\bbB{{\mathbb B}}

\def\bbE{{\mathbb E}}

\def\bbI{{\mathbb I}}

\def\Pp{\mathcal{P}}
\def\dt{{\Delta t}}

\newcommand{\dr}{\partial}
\newcommand{\dx}{\mathrm{d}x}

\def\k{{{K}}}
\def\l{{{L}}}
\newcommand{\ke}{{ {K}^*}}
\def\le{{{L}^*}}
\newcommand{\sig}{ \sigma}
\newcommand{\sige}{\sigma^*}

\newcommand{\M}{\mathfrak{M}}
\newcommand{\Mie}{\mathfrak{M}^*}
\newcommand{\dM}{\dr\M}
\newcommand{\dMie}{\dr\Mie}
\newcommand{\Me}{\Mie\cup\dr\Mie}
\newcommand{\ut}{u_{\T}}

\newcommand{\vt}{v_{\T}}

\newcommand{\Ee}{\mathcal{E}}
\newcommand{\Ds}{{\mathcal{D}}}

\newcommand{\n}{\boldsymbol{{\mathbf{n}}}}
\newcommand{\nksig}{{\mathbf{n}}_{\sig K}}

\newcommand{\nkesige}{\boldsymbol{{\mathbf{n}}}_{\sige\ke}}

\newcommand{\tkele}{\boldsymbol{{\boldsymbol{\tau}}}_{{\ke\!\!\!,\le}}}
\newcommand{\tkl}{\boldsymbol{{\boldsymbol{\tau}}}_{{K\!,L}}}

\newcommand{\petitt}{{\scriptscriptstyle\mathcal{T}}}
\newcommand{\DD}{\mathfrak{D}}
\newcommand{\DDext}{\mathfrak{D}_{ext}}

\newcommand{\DDk}{\mathfrak{D}_{\k}}

\newcommand{\D}{{\scriptstyle\mathcal{D}}}

\newcommand{\Dsig}{{\mathcal{D}_{\sigma,\sigma^*}}}

\newcommand{\petitD}{{\scriptscriptstyle\mathcal{D}}}

\newcommand{\petitk}{{\scriptscriptstyle \mathcal{K}}}

\newcommand{\petitke}{{\scriptscriptstyle \mathcal{K}^*}}

\newcommand{\petitDD}{{\scriptscriptstyle\mathfrak{D}}}

\newcommand{\uk}{u_{\k}}
\newcommand{\ul}{u_{\l}}
\newcommand{\uke}{u_{\ke}}
\newcommand{\ule}{u_{\le}}

\newcommand{\vk}{v_\k}

\newcommand{\vke}{v_\ke}

\newcommand{\xk}{x_{\k}}
\newcommand{\xl}{x_{\l}}

\newcommand{\xke}{x_{\ke}}
\newcommand{\xle}{x_{\le}}
\newcommand{\xD}{x_{\petitD}}

\newcommand{\uM}{u_{\M}}
\newcommand{\udM}{u_{\dM}}
\newcommand{\udMie}{u_{\dMie}}

\newcommand{\uMe}{u_{\Mie}}

\newcommand{\JDD}{J_{\petitDD}}
\newcommand{\JD}{J_{\petitD}}

\newcommand{\msig}{{\rm m}_{\sig}}

\newcommand{\msige}{{\rm m}_{\sige}}

\newcommand{\mk}{{\rm m}_{\k}}
\newcommand{\mke}{{\rm m}_{\ke}}
\newcommand{\md}{{\rm m}_{\Ds}}

\def\O{\Omega}
\newcommand{\dO}{\partial \O}

\newcommand{\dsp}{\displaystyle}

\def\div{\mathrm{div}}
\newcommand{\grad}{\nabla}

\newcommand{\R}{\mathbb R}

\newcommand{\gradDD}{\nabla^{\DD }}

\newcommand{\gradD}{\nabla^{\Ds}}

\newcommand{\divt}{\div^{\T}}

\newcommand{\Pet}{{\mathcal P}^{\T}}

\newcommand{\Pem}{\mathcal{P}^{\M}}
\newcommand{\Pedrm}{\mathcal{P}^{\dr\M}}
\newcommand{\Pemie}{\mathcal{P}^{\Mie}}
\newcommand{\Pedrmie}{\mathcal{P}^{\dr\Mie}}
\newcommand{\Pek}{\mathcal{P}_{K}}
\newcommand{\Peke}{\mathcal{P}_{\ke}}

\newcommand{\divm}{\div^{\M}}
\newcommand{\divdrm}{\div^{\dr\M}}
\newcommand{\divmie}{\div^{\Mie}}
\newcommand{\divdrmie}{\div^{\dr\Mie}}

\newcommand{\divk}{\div_{\k}}
\newcommand{\divke}{\div_{\ke}}

\newcommand{\uh}{u_{h}}
\newcommand{\uhdt}{u_{h,\dt}}
\newcommand{\vh}{v_{h}}

\newcommand{\T}{\mathcal{T}}
\newcommand{\Rt}{\R^\T}
\newcommand{\Ht}{H_\T}
\newcommand{\Htdt}{H_{\T,\dt}}
\newcommand{\HDD}{H_\DD}
\newcommand{\HDDdt}{H_{\DD,\dt}}

\newcommand{\RdDD}{\left(\R^2\right)^{\DD}}

\newcommand{\h}{\mathrm{ size}(\mathcal{T})}
\newcommand{\size}{\mathrm{size}}

\newcommand{\sumdiam}{\ssum_{\Ds\in\DD}}

\newcommand{\sumdiamext}{\ssum_{\Ds\in\DDext}}

\newcommand{\sumpri}{\ssum_{\k\in\M}}
\newcommand{\sumpriext}{\ssum_{\l\in\dr\M}}
\newcommand{\sumdua}{\ssum_{\ke\in{\overline\Mie}}}

\newcommand{\sumdiamsigk}{\mathop{\sum_{\D\in\DDk}}_{\D=\Dsig}}

\DeclareMathOperator*{\ssum}{\sum}


\newcommand{\xib}{{\boldsymbol \xi}}
\newcommand{\phib}{{\boldsymbol \varphi}}


\begin{document}

\title[A nonlinear DDFV scheme for convection diffusion equations]{Numerical analysis of a nonlinear free-energy diminishing Discrete Duality Finite Volume scheme for convection diffusion equations}\thanks{The authors are supported by the Inria teams RAPSODI and COFFEE, the LabEx CEMPI (ANR-11-LABX-0007-01), the GEOPOR project (ANR-13-JS01-0007-01) and the MOONRISE project (ANR-14-CE23-0007).}

\author{Cl\'ement Canc\`es}
\address{
Cl\'ement Canc\`es (\href{mailto:clement.cances@inria.fr}{\tt clement.cances@inria.fr}).
Team RAPSODI, Inria Lille -- Nord Europe, 40 av. Halley, F-59650 Villeneuve d'Ascq, France.
} 

\author{Claire Chainais-Hillairet}
\address{
Claire Chainais-Hillairet (\href{mailto:Claire.Chainais@math.univ-lille1.fr}{\tt Claire.Chainais@math.univ-lille1.fr}).\\
Univ. Lille, CNRS,UMR 8524-Laboratoire Paul Painlev\'e. F-59000 Lille, France.
}

\author{Stella Krell}
\address{
Stella Krell (\href{mailto:stella.krell@unice.fr}{\tt stella.krell@unice.fr}).
Universit\'e de Nice, CNRS, UMR7351-Laboratoire J.-A. Dieudonn\'e. 
F-06100 Nice, France.
}

\begin{abstract}
We propose a nonlinear Discrete Duality Finite Volume scheme to approximate the solutions of drift diffusion equations. 
The scheme is built to preserve  at the discrete level even on severely distorted meshes the energy / energy dissipation relation.
This relation is of paramount importance to capture the long-time behavior of the problem in an accurate way. 
To enforce it, the linear convection diffusion equation is rewritten in a nonlinear form before being discretized. 
We establish the existence of positive solutions to the scheme. Based on compactness arguments,
the convergence of the approximate solution towards a weak solution is established. 
Finally, we provide numerical evidences of the good behavior of the scheme when the discretization parameters 
tend to $0$ and when time goes to infinity.
\end{abstract}

\maketitle

\noindent
{\small {\bf Keywords.}
Convection diffusion equation, Discrete Duality Finite Volumes, convergence analysis, discrete entropy method
\vspace{5pt}

\noindent
{\bf AMS subjects classification. }
65M08, 65M12, 35K20
}

\section{Introduction}

\subsection{Motivation}\label{ssec:goal-pos}

The modeling of systems of interacting particles, like electrons in electronic devices, ions in plasmas, chemical species in biological membranes for instance, leads to systems of evolutive partial differential equations. The knowledge of the large time behavior of such systems is crucial for the understanding of the underlying physical phenomena. In many cases, the relaxation to an equilibrium configuration is based on the second law of thermodynamics and on the dissipation of some entropies. 

Based on works in kinetic theory, mathematicians have intensively developed the entropy method for the study of the large time behavior of different systems of PDEs. Let us mention works on Boltzmann and Landau equations \cite{toscani_villani_2000}, on linear Fokker-Planck equations \cite{carrillo_toscani_1998}, on porous media equations \cite{carrillo_toscani_2000}, on reaction-diffusion systems \cite{desvillettes_fellner_2006, desvillettes_fellner_2015, glitzky_2008}, on drift-diffusion systems for semiconductor devices \cite{gajewski_groger_1986, gajewski_groger_1989, gajewski_gartner_1996}. We also refer to the survey paper \cite{arnold_etal_2004} and to the reference book \cite{Jungel_entropybook}. 
Similar results were obtained based on the interpretation of PDE models as Wasserstein gradient flows~\cite{AGS08}. 
We refer for instance to~\cite{JKO98, BGG12} for linear Fokker Planck equations, 
to \cite{Otto01} for the porous medium equation, to~\cite{BGG13} for granular media. This list is far from being exhaustive.

The knowledge of the large time behavior of such evolution equations, the existence of some entropies which are dissipated along time  are  structural features, as positivity of densities or conservation of mass, that should be preserved at the discrete level by numerical schemes. The question of the large time behavior of numerical schemes has been investigated 
for instance for coagulation-fragmentation models \cite{filbet_2008}, for nonlinear diffusion equations \cite{chainais_jungel_schuchnigg, jungel_schuchnigg}, for reaction-diffusion systems \cite{glitzky_2008, glitzky_2011}, 
for drift-diffusion systems \cite{Chatard2011, chainais_bessemoulin_2016}. These last works show that the Scharfetter-Gummel numerical fluxes, first introduced in \cite{SG69} for the approximation of convection-diffusion fluxes and widely used later for the simulation of semiconductor devices, preserve the thermal equilibrium. Their use in the numerical approximation of drift-diffusion systems ensure the exponential decay towards equilibrium of the numerical scheme. 
Unfortunately, such numerical fluxes can only be applied in two-points flux approximation finite volume schemes and therefore  on restricted meshes. Moreover, they do not extend to anisotropic convection-diffusion equations. 

Therefore, it seems crucial to propose new finite volume schemes which preserve the large-time behavior of anisotropic convection-diffusion equations and which apply on almost general meshes. In \cite{CG2016}, the authors proposed and analyzed a VAG scheme satisfying these prescribed properties. 
In this work, we propose and study the convergence analysis of a nonlinear free-energy diminishing discrete duality finite volume scheme~\cite{CCK_FVCA}. 

\subsection{Presentation of the continuous problem}

We focus on a very basic drift-diffusion equation with potential convection and anisotropy. 
Let $\Omega$ be a polygonal connected open bounded subset of $\R^2$ and let $T>0$ be a finite time horizon. The problem writes:\begin{subequations}\label{pb}
\begin{align}
&\partial_t u+ \div {\mathbf J} =0, \ \mbox{ in } Q_T = \Omega\times (0,T),\label{cons}\\
& {\mathbf J}=-\bLambda\grad u -u \bLambda \grad V,\ \mbox{ in } Q_T,\label{flux}\\
&{\mathbf J}\cdot {\mathbf n} =0,\ \mbox{ on } \p \Omega\times (0,T),\label{CLneum}\\
&u(\cdot,0)=u_0,\ \mbox{ in }\Omega,\label{CI}
\end{align}
\end{subequations}
with ${\mathbf n} $ the outward unit normal to $ \p \Omega$ and the following assumptions on the data:
\begin{enumerate}\label{hyp}
\item[(A1)] The initial data $u_0$ is measurable, nonnegative and satisfies
\be\label{eq:A1}
\int_\Omega u_0 \d\x >0\quad  \mbox{ and }\quad  \int_\O H(u_0)\d\x < \infty, 
\ee
where $ H(s)=s\log s -s+1$ for all $s\ge 0$.
\item[(A2)] The exterior potential $V$ belongs to $C^1({\overline\Omega},\R)$. Without loss of generality, we assume that $V \ge 0$ in $\ov \O$.
\item[(A3)] The anisotropy tensor $\bLambda$ is supposed to be bounded 
(i.e., $\bLambda \in L^\infty(\O)^{2\times 2}$), symmetric (i.e., $\bLambda= \bLambda^T$ a.e. in $\O$), and 
uniformly elliptic: there exist $\lambda_m>0$ and $\lambda^M>0$ such that 
\be\label{eq:elliptic}
\lambda_m |\bv|^2 \leq \bLambda(\x) \bv \cdot \bv \leq \lambda^M |\bv|^2, \qquad \text{for all $\bv \in \R^2$ and almost all $\x \in \O$}.
\ee
\end{enumerate}

The flux ${\mathbf J}$ can be reformulated in the nonlinear form
$$
{\mathbf J} = -u \bLambda \grad (\log u+V).
$$
Testing equation~\eqref{cons}  by $\log(u)+V$ leads to the so-called {\em energy/energy dissipation} relation (energy/dissipation for short)
\be\label{eq:EI}
\dsp \frac{d {\mathbb E}}{dt}+{\mathbb I}=0, 
\ee
where the free energy $\mathbb E$ and the dissipation $\mathbb I$ for \eqref{pb} are respectively defined by
 \begin{align}
{\mathbb E}(t)=& \int_{\Omega} (H(u)+Vu)(\x,t)dx, \label{eq:E} \\
 {\mathbb I}(t)= & \int_\Omega u\bLambda \grad (\log u+V) \cdot \grad (\log u+V) dx. \label{eq:I}
 \end{align}
Since $u$ is nonnegative, so does ${\mathbb I}$ and the free energy $\mathbb E$ is decaying with time.
As highlighted for instance in~\cite{AMTU01} and \cite{BGG12},
the solution $u$ to~\eqref{pb} converges towards the steady-state
$$u_\infty=\left(\int_\O u_0 dx / \int_\O e^{-V} dx\right) e^{-V}$$ 
when time goes to infinity. In the case where $\bLambda$ does not depend on $\x$ and where both $\O$ and $V$ 
are convex, this convergence is exponentially fast. 

The energy/energy dissipation relation~\eqref{eq:EI} provides a control on the Fisher information 
\be\label{eq:Fisher-chainrule}
\iint_{Q_T} u |\grad \log(u)|^2 \d\x \d t 
= 4 \iint_{Q_T} |\grad \sqrt{u}|^2 \d\x \d t \leq C. 
\ee
Thus it is natural to seek the solution in the space
$$
\left\{ u: Q_T \to \R_+\; \middle| \; \int_\O H(u(\x,\cdot)) \d\x \in L^\infty(0,T) \;\text{and}\; 
\sqrt{u} \in L^2(0,T;H^1(\O))\right\}.
$$
This motivates the following notion of weak solution. 
\begin{Def}\label{Def:weak}
A function $u: Q_T \to \R_+$ is said to be a weak solution to the problem  \eqref{pb} if $H(u) \in L^\infty(0,T,L^1(\O))$, $\sqrt u \in L^2(0,T,H^1(\O))$, and~\eqref{pb}
is satisfied in the distributional sense, i.e., for all 
$\varphi \in C^\infty_c(\ov\O\times[0,T))$, there holds
\be\label{eq:weak}
\iint_{Q_T} u \p_t \varphi \, \d\x \d t + \int_\O u_0 \varphi(\cdot,0) \d\x 
- \iint_{Q_T} \left(  u \grad V + \grad u  \right) \cdot \bLambda \grad \varphi \, \d\x \d t = 0.
\ee
\end{Def}


\subsection{Outline of the paper}

In Section \ref{sec:scheme}, we introduce the numerical scheme and state the main results of the paper: existence of a positive solution to the scheme and convergence of a sequence of approximate solutions towards a weak solution. The existence of a solution to the scheme is established in Section \ref{sec:existence}. It strongly relies on the conservation of mass at the discrete level and on a discrete counterpart of an energy/dissipation estimate. Section \ref{sec:convergence} is devoted to the proof of convergence of the scheme. The effective behavior of the numerical method is eventually discussed  in Section~\ref{sec:num}. 
It is shown that the method is second order accurate w.r.t. space in $L^2$ norm, whereas the approximate gradient 
super-converges with observed order 3/2. Moreover, the method exhibit a very accurate long-time behavior.

\section{Presentation of the scheme and main results}\label{sec:scheme}

\subsection{Meshes and notations}\label{sec-meshes}


In order to define a DDFV scheme, as for instance in \cite{DO_2005,ABH_2007}, we need to introduce three different meshes -- the primal mesh, 
the dual mesh and the diamond mesh -- and some associated notations.

The primal mesh denoted $\overline{\M}$ is composed of the interior primal mesh $\M$ (a partition of $\Omega$ with polygonal control volumes) and the set $\dr\M$ of boundary edges seen as degenerate control volumes. 
For all $K\in \overline{\M}$, we define $x_K$ the center of $K$. The family of centers is denoted by ${\mathfrak X}=\{\xk,\k\in \overline{\M}\}$.

Let ${\mathfrak X}^*$ denote the set of the vertices of the primal control volumes in $\overline{\M}$. 
Distinguishing the interior vertices from the vertices lying on the boundary, 
 we split ${\mathfrak X}^*$ into ${\mathfrak X}^*={\mathfrak X}_{int}^*\cup {\mathfrak X}_{ext}^*$. To any point $\xke\in {\mathfrak X}_{int}^*$, we associate the polygon $\ke$, whose vertices are
 $\{\xk\in {\mathfrak X}/\xke\in{\overline\k},\k\in\M\}$. 
The set of these polygons defines the interior dual mesh denoted by $\M^*$. To any point $\xke\in {\mathfrak X}_{ext}^*$, we then associate the polygon $\ke$,
 whose vertices are $\{\xke\}\cup\{\xk\in X/\xke\in{\bar\k},\k\in\overline{\M}\}$. The set of these polygons is denoted by $\dr\M^*$ 
called the boundary dual mesh and the dual mesh is $\Me$, denoted by $\overline{\Mie}$.

For all neighboring primal cells $\k$ and $\l$, we assume that $\dr\k\cap\dr\l$ is a segment, corresponding to an edge of the mesh $\M$, 
denoted by $\sigma=\k\vert\l$. Let $\Ee$ be the set of such edges. We similarly define the set $\Ee^*$ of the edges  of the dual mesh.
For each couple $(\sigma,\sigma^*)\in\Ee\times\Ee^*$ such that $\sigma=\k\vert\l=(\xke,\xle)$ and $\sigma^*=\ke\vert\le=(\xk,\xl)$,
 we define the quadrilateral diamond cell $\Dsig$ whose diagonals are $\sigma$ and $\sigma^*$, as shown on Figure \ref{fig_diamonds}. If $\sigma\in \Ee\cap\dr\Omega$,
 we note that the diamond degenerates into a triangle. The set of the diamond cells defines the diamond mesh $\DD$.
 It is a partition of $\Omega$.
We can rewrite $\DD=\DD^{ext}\cup \DD^{int}$ where $\DD^{ext}$ is the set of all the boundary diamonds and $\DD^{int}$ the set of all the interior diamonds.

\begin{figure}[htb]
\begin{center}
\begin{tikzpicture}[scale=1.]
\node[rectangle,scale=0.8,fill=black!50] (xle) at (0,0) {};
\node[circle,draw,scale=0.5,fill=black!5] (xl) at (2,1.3) {};
\node[rectangle,scale=0.8,fill=black!50]  (xke) at (0,4) {};
\node[circle,draw,scale=0.5,fill=black!5] (xk) at (-2,2.3) {};
\draw[line width=1pt] (xle)--(xke);
\draw[dashed, line width=1pt] (xk)--(xl);
\draw[dash pattern=on 2pt off 3pt on 6pt off 3pt,line width=2pt] (xk)--(xke)--(xl)--(xle)--(xk);

\node[yshift=-8pt] at (xle){$\xle$};
\node[yshift=8pt] at (xke){$\xke$};
\node[xshift=10pt] at (xl){$\xl$};
\node[xshift=-10pt] at (xk){$\xk$};

\draw[->,line width=1pt] (-2+3.*0.4,2.3-3.*0.1)--(-2+4.8*0.4,2.3-4.8*0.1);
\draw[->,line width=1pt] (-2+3.*0.4,2.3-3.*0.1)--(-2+3.*0.4-1.8*0.1,2.3-3.*0.1-1.8*0.4);
\draw[->,line width=1pt] (0,2.7)--(0,2.);
\draw[->,line width=1pt] (0,2.7)--(.7,2.7);
\node[right] at (0,2.3){$\tkele$};
\node[right] at (0.,2.95){$\nksig$};
\node at (-0.5,2.2){$\tkl$};
\node at (-0.45,1.25){$\nkesige$};

\draw[line width=1pt]  (2.2,4)--(2.8,4);
\node[right,xshift=8] at (2.8,4){$\sigma=\k\vert\l$, edge of the primal mesh};
\draw[dashed, line width=1pt] (2.2,3.5)--(2.8,3.5);
\node[right,xshift=8] at (2.8,3.5){$\sigma^*=\ke\vert\le$, edge of the dual mesh};
\draw[dash pattern=on 2pt off 3pt on 6pt off 3pt,line width=2pt] (2.2,3)--(2.8,3);
\node[right,xshift=8] at (2.8,3.){Diamond $\Dsig$};
\node[rectangle,scale=0.8,fill=black!50] at (2.8,2.5) {};
\node[circle,draw,scale=0.5,fill=black!5] at (2.8,2.) {};
\node[right,xshift=8] at (2.8,2.5){Vertices of the primal mesh};
\node[right,xshift=8] at (2.8,2.){Centers of the primal mesh};
\node [right]at (0,1.5) {$x_\Ds$};
\node at (0,1.8) {$\bullet$};

\node[rectangle,scale=0.8,fill=black!50] (xle2) at (11,0) {};
\node[circle,draw,scale=0.5,fill=black!5] (xl2) at (11,2) {};
\node[rectangle,scale=0.8,fill=black!50]  (xke2) at (11,4) {};
\node[circle,draw,scale=0.5,fill=black!5] (xk2) at (9.5,2.3) {};
\draw[line width=1pt] (xle2)--(xke2);
\draw[dashed, line width=1pt] (xk2)--(xl2);
\draw[dash pattern=on 2pt off 3pt on 6pt off 3pt,line width=2pt] (xle2)--(xk2)--(xke2);

\node[yshift=-8] at (xle2){$\xle$};
\node[yshift=8] at (xke2){$\xke$};
\node[xshift=10] at (xl2){$\xl$};
\node[xshift=-10] at (xk2){$\xk$};

\end{tikzpicture}
\end{center}
\caption{Definition of the diamonds $\Dsig$ and related notations.}\label{fig_diamonds}
\end{figure}
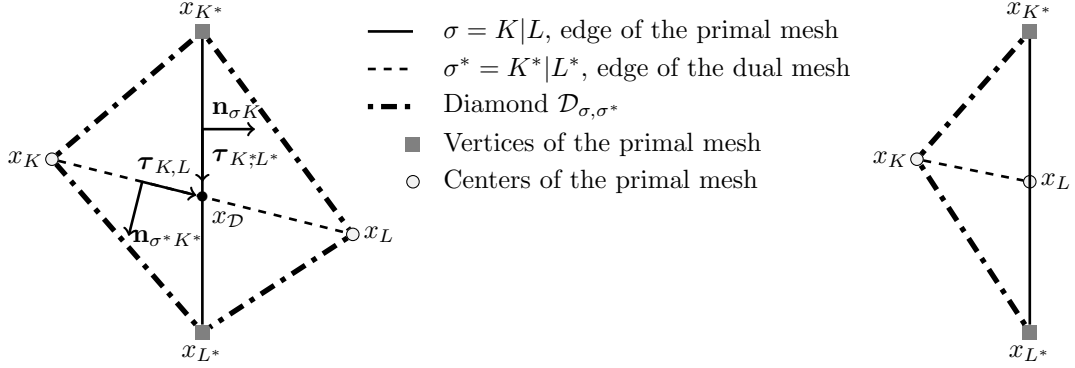

Finally, the DDFV mesh is made of  $\T=(\overline{\M},\overline{\Mie})$ and $\DD$.  For each primal or dual cell $M$ ($M\in {\overline \M} $ or 
$M\in {\overline \Mie}$), we define ${\rm m}_M$ the measure of $M$, $\Ee_M$ the set of the edges of $M$ 
(it coincides with the edge $\sigma=M$ if $M\in\dr\M$), $\DD_M$ the set of diamonds $\Dsig\in\DD$ such that ${\rm m}(\Dsig\cap M)>0$, 
and $d_M$ the diameter of $M$. 

For a diamond $\Dsig$, whose vertices are $(\xk,\xke,\xl,\xle)$, 
we define:
$x_\Ds$ the center of the diamond cell $\Ds$: $\{\xD\}=\sig\cap\sige$, $\msig$ the length of the primal edge $\sig$,
$\msige$ the length of the dual edge $\sige$, $d_\Ds$ the diameter of $\Ds$, $\alpha_\Ds$ the angle between $(\xk,\xl)$ and $(\xke,\xle)$. 
We will also use two direct basis $(\tkele,\nksig)$ and $(\nkesige,\tkl)$, where $\nksig$ is the unit normal to $\sigma$, outward $\k$, $\nkesige$ is the unit normal to $\sigma^*$, outward $\ke$, $\tkele$ is the unit tangent vector to $\sigma$, oriented from $\ke$ to $\le$, $\tkl$ is the unit tangent vector to $\sigma^*$, oriented from $\k$ to $\l$. 
Denoting by $\md$ the $2$-dimensional Lebesgue measure of $\Ds$, one has 
\be\label{eq:m_Dd}
\md = \frac12 \msig\msige \sin(\alpha_\Ds), \qquad \forall \Ds = \Ds_{\sig, \sig^\ast} \in \DD.
\ee

We define two local regularity factors $\theta_\Ds, \tilde \theta_\Ds$ of the diamond cell $\Ds= \Ds_{\sig, \sig^\ast} \in \DD$ by 
\be\label{eq:thetaD}
\theta_\Ds = \frac1{2\sin(\alpha_\Ds)}\left(\frac{\msig}{\msige} + \frac{\msige}{\msig}\right) \ge 1, 
\qquad 
\tilde \theta_\Ds = \max \left( \max_{K \in \M_\Ds} \frac{\md}{{\rm m}_{\Ds \cap K}}\, ; \,\max_{K^* \in \M^*_\Ds} \frac{\md}{{\rm m}_{\Ds \cap K^*}} \right).
\ee
In what follows, we assume that there exists $\theta^\star \ge 1$ such that 
\be\label{eq:theta_star}
1 \leq \theta_\Ds, \tilde \theta_\Ds \leq \theta^\star, \qquad \forall \Ds \in \DD.
\ee
In particular, this implies that 
\be\label{eq:alpha-star}
\sin(\alpha_\Ds) \ge \frac1{\theta^\star}, \qquad \forall \Ds \in \DD.
\ee
Moreover, owing to the definition of $\tilde \theta_\Ds$ and to~\eqref{eq:theta_star}, one has 
\be\label{eq:reg-meas}
\sum_{\Ds \in \DD_K} \md \leq \theta^\star \mk
\quad \text{and}\quad
\sum_{\Ds \in \DD_{K^*}} \md \leq \theta^\star \mke
\ee
Finally, we define the size of the mesh: 
${\rm size}(\T)= {\max_{\Ds\in\DD}} {\rm d}_{\Ds}.$

\subsection{Discrete unknowns and discrete operators}\label{ssec:operators}

We first define the different sets of discrete unknowns. As it is usual for DDFV methods,
we need several types of degrees of freedom to represent scalar and  vector  fields in the discrete setting.
We  introduce  $\Rt$  the linear space of scalar fields constant on the cells of ${\overline \M}$ and ${\overline \Mie}$:
\[
 \ut\in\Rt\Longleftrightarrow  \ut =\left(\left(\uk\right)_{\petitk\in{\overline \M}},\left(\uke\right)_{\petitke\in{\overline \Mie}}\right)
\]
and  $\RdDD$  the linear space of vector fields constant on the diamonds: 
$$ 
\xib_\DD \in \RdDD\Longleftrightarrow \xib_\DD=\left(\xib_\Ds\right)_{\Ds\in\DD}.
$$
Let us mention that we similarly denote by $\R^\DD$ the set of scalar fields constant on the diamonds.

Then,we define the positive semi-definite bilinear form\footnote{Although it mimics the continuous $L^2(\O)$ scalar product, the bilinear form 
$\llbracket\cdot,\cdot\rrbracket_\T $ is not a scalar product since it does not involve 
the primal boundary edges $\p\M$. It is therefore not definite.} $\llbracket\cdot,\cdot\rrbracket_\T $ 
on $\Rt $ and the scalar product $(\cdot,\cdot)_{\bLambda, \DD} $ on $\RdDD$ by 
$$
\begin{aligned}
&\dsp\left\llbracket\vt,\ut\right\rrbracket_\T&&=&&\frac{1}{2}\left(\sumpri\mk\uk \vk+\sumdua \mke\uke\vke
\right), \ \ \ \forall \ut,\vt\in \Rt,\\
&\dsp\left(\xib_\DD,\phib_\DD\right)_{\bLambda, \DD}&&=&&\sumdiam\md\ 
 \xib_\Ds\cdot \bLambda^\Ds \phib_\Ds,\ \ \ \forall \xib_\DD,\phib_\DD\in\RdDD,
\end{aligned}
$$
where 
$$\bLambda^\Ds = \frac1{\md} \int_{\Ds} \bLambda(\x) \d\x, \qquad \forall \Ds \in \DD.$$
We denote by $\|\cdot\|_{\bLambda,\DD}$ the Euclidian norm associated to the scalar product $\left(\cdot, \cdot\right)_{\bLambda,\DD}$, i.e.,
$$
\left\| \xib_\DD \right\|_{\bLambda,\DD}^2 = \left(\xib_\DD, \xib_\DD \right)_{\bLambda,\DD}, \qquad \forall \xib_\DD \in \left(\R^2\right)^\DD.
$$

The DDFV method is based on the definitions of a discrete gradient and  of a discrete divergence, which are linked by duality formula as shown in \cite{DO_2005}.
The discrete gradient has been introduced in \cite{CVV99} and developed in \cite{DO_2005}.
It  is a mapping from $\Rt$ to $\RdDD$ defined 
by  $\gradDD \ut =\dsp\left(\gradD \ut\right)_{\Ds\in\DD}$ for all  $\ut\in \Rt$, 
where
\[
 \gradD \ut =\frac{1}{\sin(\alpha_\Ds)}\left(\frac{\ul-\uk}{\msige}\nksig+\frac{\ule-\uke}{\msig}\nkesige\right), \quad \forall  \Ds\in\DD.
\]
Using \eqref{eq:m_Dd}, the discrete gradient can be equivalently written:
\[
 \gradD \ut =\frac{1}{2\md}\left(\msig(\ul-\uk)\nksig+\msige(\ule-\uke)\nkesige\right), \quad \forall  \Ds\in\DD.
\]

The discrete divergence has been introduced in \cite{DO_2005}. It is a mapping $\divt$ from $\RdDD$ to $\Rt$ defined for all 
 $\xib_\DD\in \RdDD$ by 
\[
 \divt \xib_\DD=\left(\divm \xib_\DD,\divdrm \xib_\DD,\divmie \xib_\DD,\divdrmie \xib_\DD \right),
\]
  with $\divm \xib_\DD=\left( \divk \xib_\DD\right)_{\petitk\in\M}$, $\divdrm \xib_\DD=0$,
 $\divmie\xib_\DD=\left( \divke \xib_\DD\right)_{\petitke\in\Mie} $
 and $\divdrmie\xib_\DD=\left( \divke \xib_\DD\right)_{\petitke\in\dr\Mie} $  such that:
$$
\forall\ \k\in\M,\divk \xib_\DD=\dsp\frac{1}{\mk}\sumdiamsigk\msig\ \xib_{\Ds}\cdot\nksig,
$$
and analogous definitions for $\divke\xib_\DD$ for $\ke\in\overline\Mie$.

In \cite{ABK2010}, the authors study the convergence of DDFV schemes for degenerate hyperbolic-parabolic problems. They show that a penalization operator is needed in order to establish the convergence proof. Indeed, this penalization operator ensures that the two components of a discrete function (reconstructions on the primal and dual meshes) converge to the same limit. For similar reasons (see Section \ref{sec:convergence}), we consider the same penalization operator ${\mathcal P}^\T:\Rt\to\Rt$ as in \cite{ABK2010}
and \cite{CKM15}. It is defined for all $\ut\in\Rt$ by 
\[
 \Pet \ut=\left(\Pem \ut,\Pedrm \ut,\Pemie \ut,\Pedrmie \ut \right),
\]
  with $\Pem \ut=\left( \Pek \ut\right)_{K\in\M}$, $\Pedrm \ut=0$,
 $\Pemie\ut=\left( \Peke \ut\right)_{\ke\in\Mie} $
 and $\Pedrmie\ut=\left( \Peke \ut\right)_{\ke\in\dr\Mie} $ such that, for a given parameter $\beta\in (0,2)$,
\begin{center}
$
\begin{aligned}
\forall\ \k\in\M,&\ \ \Pek \ut=\dsp\frac{1}{\mk}\frac{1}{\h^\beta}\sumdua {\rm m}_{K\cap\ke} (\uk-\uke),\\
\forall\ \ke\in{\overline \Mie},&\ \ \Peke \ut=\dsp\frac{1}{\mke}\frac{1}{\h^\beta}\sumpri {\rm m}_{K\cap\ke} (\uke-\uk).
\end{aligned}
$\end{center}

It clearly satisfies : for all $\ut,\vt\in\Rt$,
\be 
\llbracket \Pet \ut,\vt \rrbracket_\T=\frac{1}{2}\frac{1}{{\rm Size}(\T)^\beta}
\sumdua\sumpri{\rm m}_{K\cap\ke}(u_K - u_\ke)(v_K-v_\ke)\quad  \mbox{ with } \beta\in (0,2).
\label{prop_pen}
\ee

Finally, we introduce a reconstruction operator on diamonds $r^\DD$. It is a mapping from  $\R^\T$ to  $\R^\DD$ defined  for all  $\ut\in \Rt$ by  $r^\DD[\ut] =\left(r^\Ds(\ut)\right)_{\Ds\in\DD}$, 
where for $\Ds\in\DD$, whose vertices are $x_K$, $x_L$, $x_\ke$, $x_\le$,
\be\label{eq:rD}
r^\Ds(\ut)=\frac{1}{4}(\uk+\ul+\uke+\ule).
\ee

We conclude this section with a remark on the particular structure of the scalar product of two discrete gradients $(\gradDD \ut,\gradDD \vt)_{\bLambda, \DD}$ for $\ut,\vt\in \Rt$. Indeed, for  $\ut\in\Rt$ and $\Ds \in \DD$, we  define $\delta^\Ds \ut$ by 
$$
\delta^\Ds \ut=\left(\begin{array}{c} u_K-u_L\\ \uke-\ule\end{array}\right).
$$
Then, we can write 
$$
(\gradDD \ut,\gradDD \vt)_{\bLambda, \DD}=\sumdiam \delta^\Ds \ut\cdot {\mathbb A}^{\Ds}\delta^\Ds \vt,
$$
where the local matrices ${\mathbb A}^{\Ds}$ are defined by
\be\label{eq:AD}
\bbA^\Ds = \frac{1}{4 \md} \begin{pmatrix}
\msig^2 (\bLambda^\Ds \n_{\k,\sig}\cdot  \n_{\k,\sig})&  \msig {\msige}  (\bLambda^\Ds \n_{\k,\sig}\cdot  \n_{\ke,\sige}) \\
\msig \msige  (\bLambda^\Ds \n_{\k,\sig}\cdot  \n_{\ke,\sige})& \msige^2  (\bLambda^\Ds \n_{\ke,\sige}\cdot  \n_{\ke,\sige})
\end{pmatrix} = 
\begin{pmatrix}
A^\Ds_{\sig,\sig} & A^\Ds_{\sig,\sige} \\
A^\Ds_{\sig,\sige} & A^\Ds_{\sige,\sige} 
\end{pmatrix}.
\ee
It follows from elementary calculations left to the reader that 
the condition number of $\bbA^\Ds$ with respect to the 2-norm can be bounded by 
\be\label{eq:condAD}
{\rm Cond}_2(\bbA^\Ds) \leq {\rm Cond}_2(\bLambda^\Ds)  \left( \theta_\Ds + \sqrt{\theta_\Ds^2 - \frac{1}{{\rm Cond}_2(\bLambda^\Ds) }}\right)^2 < 4 (\theta^\star)^2 \frac{\lambda^M}{\lambda_m}, \qquad \forall \Ds \in \DD.
\ee

\subsection{The nonlinear DDFV scheme}\label{ssec:scheme}

Let $N_T$ be a positive integer, we consider for simplicity the constant time step is given by $\Delta t=T/{N_T}$.
For $n \in \{0,\dots, N_T\}$, we denote by $t^n = n \Delta t$.
We first discretize the initial condition by taking the mean values of $u_0$, i.e., 
\be\label{eq:u0K}
u_K^0 = \frac1{m_K} \int_K u_0 \d\x, \quad u_{K^\ast}^0 = \frac1{m_{K^\ast}} \int_K u_0 \d\x, \qquad \forall K \in \M, \; \forall K^\ast \in \ov{\M^\ast},\quad \udM^0=0,
\ee
 and the exterior potential $V$ by taking its nodal values on the primal and dual cells, i.e.,
 \be\label{eq:VK}
 V_K = V(\x_K), \quad V_{K^\ast} = V(\x_{K^\ast}), \qquad  \qquad \forall K \in \ov \M, \; \forall K^\ast \in \ov{\M^\ast}.
 \ee
 It defines in particular $\ut^0$ and $V_\T$. 

The scheme requires a stabilization parameter denoted by $\kappa>0$. It is a fixed parameter.
Then, for all $n\geq 0$, we look for $\ut^{n+1}\in (\R_+^\ast)^\T$ solution to the following variational formulation:
\begin{subequations}\label{scheme}
\begin{align}
&\Bigl\llbracket\dsp\frac{\ut^{n+1}-\ut^n}{\Delta t}, \psi_\T\Bigl\rrbracket_\T+T_{\DD}(\ut^{n+1}; g_\T^{n+1},\psi_\T)+
\kappa \left\llbracket {\mathcal P}^\T g_\T^{n+1},\psi_\T\right\rrbracket_\T =0,\quad \forall \psi_\T\in\Rt,\label{sch_formcompacte}\\
& T_{\DD}(\ut^{n+1}; g_\T^{n+1},\psi_\T)=\sumdiam r^\Ds(\ut^{n+1}) \, \delta^\Ds g_\T^{n+1}\cdot {\mathbb A}^{\Ds}\delta^\Ds \psi_\T,\label{sch_defTd}\\
& g_\T^{n+1}=\log (\ut^{n+1})+V_\T.\label{sch_defgt}
\end{align}
\end{subequations}
Let us mention that, in view of its implementation, the scheme can be rewritten on each mesh as follows:
\begin{subequations}\label{scheme_mesh}
\begin{align}
 &\frac{\uM^{n+1}-\uM^{n}}{\dt}+\divm(\JDD^{n+1})+\kappa \Pem g_\T^{n+1}=0,\label{eq:sch_primal}\\
 &\frac{\uMe^{n+1}-\uMe^{n}}{\dt}+\divmie(\JDD^{n+1})+\kappa \Pemie g_\T^{n+1}=0,\label{eq:sch_dual}\\
 & \frac{\udMie^{n+1}-\udMie^{n}}{\dt}+\divdrmie(\JDD^{n+1})+\kappa \Pedrmie g_\T^{n+1} =0,\label{eq:sch_dualbord}\\
 &\JDD^{n+1}=-r^\DD[\ut^{n+1}]\bLambda^\DD \gradDD g_\T^{n+1},\label{eq:sch_diam}\\
 & \msig\JD^{n+1}\cdot{\mathbf n}=0,\qquad\forall\ \Ds=\Dsig\in\DDext\label{eq:sch_primalbord}.
\end{align}
\end{subequations}

\subsection{Functional spaces}\label{ssec:functspaces}

For a given vector $\ut$ defined on a DDFV mesh $\T$ of size $h$, one usually reconstructs three different approximate solutions : $u_{h,\M}$ is a piecewise constant reconstruction on the primal mesh,  $u_{h,\overline{\Mie}}$ is a piecewise constant reconstruction on the dual mesh and $u_h$ is the mean value of $u_{h,\M}$ and $u_{h,\overline{\Mie}}$. They are defined by
$$
u_{h,\M}= \sumpri\uk\boldsymbol{1}_\k, \quad u_{h,\overline{\Mie}}=\sumdua\uke\boldsymbol{1}_\ke \mbox{ and }
u_h=\frac{1}{2}(u_{h,\M}+u_{h,\overline{\Mie}}).
$$
Then, the set of the approximate solutions is denoted by $\Ht$:
\begin{multline}\label{defHt}
 \Ht=\Bigg\{\uh\in L^1(\O)\ /\ \exists
 \ut=\left(\left(\uk\right)_{\petitk\in{\overline \M}},\left(\uke\right)_{\petitke\in{\overline \Mie}}\right) \in\Rt\\
\mbox{ such that } \uh=\frac12\sumpri\uk\boldsymbol{1}_\k+\frac12\sumdua\uke\boldsymbol{1}_\ke\Bigg\}.
\end{multline}
In the sequel, we will also need some reconstruction of the approximate solutions on the diamond cells. Thanks to the reconstruction operator on diamonds $r^\DD$, we can define $u_\Ds= r^\Ds(\ut)$ for all $\Ds\in\DD$ for instance. Therefore, we can define a  piecewise constant function on diamond cells $u_{h,\DD}$ by $u_{h,\DD}=\sumdiam u_\Ds\boldsymbol{1}_\Ds$. The set of such functions is denoted $\HDD$.

 For a function $\uh\in\Ht$, we define its approximate gradient $\nabla^h\uh\in (\HDD)^2$ by
 $$
 \nabla^h\uh=\dsp\sumdiam\gradD\ut\boldsymbol{1}_\Ds.
 $$
 
 As the problem  \eqref{pb} is an evolutive problem, the numerical scheme \eqref{scheme} defines $\ut^{n}\in\Rt$ for all $n\in \{0,\ldots,N_T\}$. We consider approximate solutions which are piecewise constant in time. 
 Therefore, we define 
 the space-time approximation spaces $\Htdt$ and $\HDDdt$ based respectively
 on $\Ht$ and $\HDD$:
  \begin{align*}
 \Htdt&=\Big\{\uhdt\in L^1(Q_T)\ /\  \uhdt(\x,t)=\uh^n(\x)\  \forall t\in[t_{n-1},t_n), \text{ with }\uh^n\in\Ht, 
\ \forall 1\leq n\leq N_T\Big\},\\
 \HDDdt&=\Big\{u_{h,\dt,\DD}\in L^1(Q_T)\ /\  u_{h,\dt,\DD}(\x,t)=u_{h,\DD}^n(\x)\  \forall t\in[t_{n-1},t_n), \\
 &\hspace*{8.5cm} \text{ with }u_{h,\DD}^n\in\HDD, 
\ \forall 1\leq n\leq N_T\Big\}.
 \end{align*}
 We still keep the notation $\nabla^h$ to define the approximate gradient of $\uhdt\in \Htdt$: 
 $$
 \nabla^h\uhdt(\x,t)=\nabla^h u_h^n(\x)\ \forall  t\in[t_{n-1},t_n).
 $$
 Therefore, for all $\uhdt\in \Htdt$, we have $\nabla^h\uhdt\in(\HDDdt)^2$.
  Furthermore, we introduce the following reconstructions
\begin{subequations}\label{defudth}
 \begin{align}
 & u_{h,\dt,\M}(\x,t)=u_{h,\M}^n(\x)=\sumpri\uk^n\boldsymbol{1}_\k(\x),&\forall t\in[t_{n-1},t_n),\\
 &u_{h,\dt,{\overline \Mie}}(\x,t)=u_{h,{\overline \Mie}}^n(\x)=\sumdua\uke^n\boldsymbol{1}_\ke(\x),&\forall t\in[t_{n-1},t_n).
\end{align}
\end{subequations}

 We may now introduce some norms on the functional spaces $\Ht$ and  $\Htdt$. For a discrete solution $\ut\in\Rt$,  we define $ \left|\ut\right|_{p,\T}$ for 
 $1\leq p\leq\infty$ by 
 \begin{eqnarray*}
 \left|\ut\right|_{p,\T}^p&=&\left(\frac{1}{2}\sumpri\mk\left|\uk\right|^p+\frac{1}{2}\sumdua \mke\left|\uke\right|^p
\right)^{1/p}\\
\left|\ut\right|_{\infty,\T}&=&\max\left(\max_{\k\in{ \M}}|\uk|, \max_{\ke\in{\overline \Mie}}|\uke|\right).
\end{eqnarray*}
It permits to define discrete $W^{1,p}$-norms ($1\leq p\leq +\infty$) and a discrete $W^{-1,1}$-norm on $\Ht$. For all $\uh\in\Ht$, we set 
\begin{eqnarray*}
 \left\|\uh\right\|_{1,p,\T}&=&\left(\left|\ut\right|_{p,\T}^p+ \left\|\nabla^h\uh\right\|_{p}^p\right)^{1/p},\quad \forall 1\leq p<+\infty, \\
 \|\uh\|_{1,\infty,\T}&=&\left|\ut\right|_{\infty,\T}+ \left\|\nabla^h\uh\right\|_{\infty},\\
  \|\uh\|_{1,\infty\star,\T}&=&\|\uh\|_{1,\infty,\T}+\left\llbracket\Pet \ut,\ut\right\rrbracket_\T^{\frac12},\\
 \|\uh\|_{-1,1,\T}&=&\max \bigg\{\left \llbracket\vt,\ut\right\rrbracket_\T,\forall \vh\in \Ht 
 \text{ verifying } \|\vh\|_{1,\infty\star,\petitt}\leq 1\bigg\}.
\end{eqnarray*}
Let us just remark that, as $\nabla^h\uh$ is a piecewise constant function on diamonds, we have:
$$
\left\|\nabla^h\uh\right\|_{p}^p=\sumdiam \md |\gradD\ut |^p\quad \forall 1\leq p<+\infty \mbox{ and }
\left\|\nabla^h\uh\right\|_{\infty}=\max_{\Ds\in\DD}|\gradD\ut|.
$$

Then, we define some discrete $L^q(0,T; W^{1,p}(\Omega))$ ($1\leq p,q<+\infty$), $L^\infty(0,T;W^{1,\infty}(\O))$ and $L^\infty(0,T;L^p(\O))$-norms on $\Htdt$. 
For all $\uhdt\in\Htdt$, we set:
\begin{eqnarray*}
\left\|\uhdt\right\|_{q;1,p,\T}&=&\left(\ssum_{n=1}^{N_T}\dt  \left\|\uh^{n}\right\|_{1,p,\T}^q\right)^{1/q},\quad \forall 1\leq p,q<+\infty, \\
\left\|\uhdt\right\|_{\infty;1,\infty,\T}&=&\underset{n\in\{1,\cdots,N_T\}}{\max} \left\|\uh^n\right\|_{1,\infty,\T},\\
\left\|\uhdt\right\|_{\infty;0,p,\T}&=&\underset{n\in\{1,\cdots,N_T\}}{\max}\left(\frac12\sumpri\mk|\uk^n|^p+\frac12\sumdua\mke|\uke^n|^p \right)^{1/p},\quad \forall 1\leq p<+\infty.
\end{eqnarray*}

\subsection{Main results}\label{ssec:main}
The numerical analysis of the scheme strongly relies on a discrete version of the 
energy/energy dissipation relation~\eqref{eq:EI}. In order to make it explicit, let us introduce the discrete counterpart $(\bbE_\T^n)_{n\geq 0}$ of the free energy $\bbE$ defined by \eqref{eq:E}:
$$
\bbE_\T^n = \llbracket H(u_\T^{n}), 1_\T \rrbracket_\T  +  \llbracket  V_\T , u_\T^{n} \rrbracket_\T, \qquad \forall n \ge 0,
$$
and the discrete counterpart $(\bbI^n_\T)_{n\geq 1}$ of the dissipation $\bbI$ defined by \eqref{eq:I}:
\be\label{eq:bbITn}
\bbI^n_\T = T_\DD \left(u_\T^n; g_\T^n, g_\T^n \right), \qquad \forall n \ge 1.
\ee

The first main result of our paper is the existence of a positive solution to the nonlinear scheme \eqref{scheme}; it is stated  in Theorem~\ref{thm:main1}. The mesh is given and 
fulfills the very permissive requirements of Section~\ref{sec-meshes}. Our nonlinear scheme~\eqref{scheme} yields a nonlinear system of 
algebraic equations. The fact that this system admits a solution is not obvious and is ensured by Theorem~\ref{thm:main1}. 
The proof strongly relies on the fact that the scheme fulfills a discrete entropy/dissipation relation. 

\begin{thm}[Existence of a discrete solution]\label{thm:main1}
For all $n\ge 0$, there exists a solution $u_\T^{n+1} \in \left(\R_+^\ast\right)^\T$ to the nonlinear system~\eqref{scheme} that 
satisfies the discrete entropy/entropy dissipation estimate
\be\label{eq:EI_D1}
\frac{\bbE^{n+1}_\T - \bbE^{n}_\T}\dt + \; \bbI_\T^{n+1}  \leq 0, \qquad \forall n \ge 0.
\ee
\end{thm}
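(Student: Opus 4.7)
The plan is to proceed by induction on $n$. Given $u_\T^n$, I construct $u_\T^{n+1} \in (\R_+^\ast)^\T$ by combining a topological degree argument, which handles existence and positivity, with an a priori energy/dissipation estimate obtained from testing the scheme with $\psi_\T = g_\T^{n+1}$; the latter yields \eqref{eq:EI_D1}.

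I would first establish the estimate, assuming a positive solution is at hand. Plugging $\psi_\T = g_\T^{n+1} = \log(u_\T^{n+1}) + V_\T$ into \eqref{sch_formcompacte}, I treat the three terms separately. Convexity of $H$ (with $H'=\log$) gives pointwise $H(b) - H(a) \leq \log(b)(b-a)$ for $a,b > 0$, so that
\[
\left\llbracket \frac{u_\T^{n+1} - u_\T^n}{\dt},\, \log u_\T^{n+1}\right\rrbracket_\T \ge \frac{\llbracket H(u_\T^{n+1}),1_\T\rrbracket_\T - \llbracket H(u_\T^n),1_\T\rrbracket_\T}{\dt},
\]
while the $V_\T$-contribution telescopes exactly since $V_\T$ is stationary. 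The second term in \eqref{sch_formcompacte} is, by construction, the dissipation $\bbI_\T^{n+1}$ of \eqref{eq:bbITn}, and it is nonnegative because $r^\Ds(u_\T^{n+1}) > 0$ (as $u_\T^{n+1} > 0$) and $\bbA^\Ds$ is positive semi-definite. Finally, the penalization term $\kappa \llbracket \Pet g_\T^{n+1}, g_\T^{n+1} \rrbracket_\T$ is nonnegative by \eqref{prop_pen}. Summing these contributions yields \eqref{eq:EI_D1} with an extra nonnegative penalization surplus on the left.

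For existence and positivity, I introduce the change of variables $u_K = \exp(g_K - V_K)$ and $u_{K^\ast} = \exp(g_{K^\ast} - V_{K^\ast})$, which automatically enforces positivity and transforms \eqref{scheme} into a continuous nonlinear equation $F(g_\T) = 0$ on the interior degrees of freedom. I then apply a finite-dimensional topological degree argument: introducing a suitable homotopy $F_\lambda$, $\lambda \in [0,1]$, connecting $F_1 = F$ to a reference map $F_0$ with nonzero degree on a ball (for instance $F_0(g_\T) = g_\T$), and along which the above estimate remains valid, it suffices to produce a $\lambda$-independent bound on any solution. Discrete mass conservation, obtained by taking $\psi_\T = 1_\T$ in \eqref{sch_formcompacte} and using $\delta^\Ds 1_\T = 0$ together with $\Pet 1_\T = 0$, combined with $H,V \ge 0$, gives a uniform bound on $\bbE_\T^{n+1}$ and hence on $\bbI_\T^{n+1} + \kappa \llbracket \Pet g_\T^{n+1}, g_\T^{n+1}\rrbracket_\T$.

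The main obstacle is converting these integral bounds into the $L^\infty$ bound on $g_\T^{n+1}$ required by the degree argument. The difficulty is that $\bbI_\T^{n+1}$ degenerates where $r^\Ds(u_\T^{n+1})$ is small, precisely where $g_\T^{n+1}$ can be very negative. The remedy is twofold: (i) mass conservation together with the entropy bound $\llbracket H(u_\T^{n+1}),1_\T\rrbracket_\T \le C$ guarantee, by a Jensen-type argument, that at least one primal cell (resp. one dual cell) carries a mass bounded away from zero, providing a local lower bound on $u_\T^{n+1}$ and hence a one-sided control on $g_\T^{n+1}$ at that cell; (ii) the penalization term \eqref{prop_pen} controls primal/dual jumps of $g_\T^{n+1}$. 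A discrete Poincar\'e--Wirtinger type inequality applied separately to the primal and dual components of $g_\T^{n+1}$, anchored at these controlled cells and glued via the penalization, then yields the desired $L^\infty$ bound and closes the degree argument. The positive solution so obtained automatically satisfies \eqref{eq:EI_D1} by the computation above.
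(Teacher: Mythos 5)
Your overall architecture (a priori entropy estimate obtained by testing with $g_\T^{n+1}$, followed by a topological degree argument in the variables $g_\T$) is the same as the paper's, and your derivation of \eqref{eq:EI_D1} --- convexity of $H$, exact telescoping of the $V_\T$ contribution, nonnegativity of the dissipation and of the penalization --- is correct and matches Proposition~\ref{prop:base}. The gap is in the step that converts the integral bounds into the uniform lower bound on $u_\T^{n+1}$ (equivalently, the $L^\infty$ bound on $g_\T^{n+1}$) needed to close the degree argument. You correctly identify that $\bbI_\T^{n+1}$ degenerates where $r^\Ds(u_\T^{n+1})$ is small, but the proposed remedy --- a discrete Poincar\'e--Wirtinger inequality anchored at one cell of controlled mass and glued via the penalization --- does not overcome this difficulty: a Poincar\'e--Wirtinger inequality requires control of the \emph{unweighted} discrete gradient of $g_\T^{n+1}$ over the whole mesh, whereas the dissipation only controls the weighted quantity $\sum_{\Ds} r^\Ds(u_\T^{n+1})\,\delta^\Ds g_\T^{n+1}\cdot \bbA^\Ds \delta^\Ds g_\T^{n+1}$. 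On diamonds far from the anchor cell the weight $r^\Ds(u_\T^{n+1})$ can be arbitrarily small, so $\delta^\Ds g_\T^{n+1}$ is simply not controlled there, and the inequality you invoke is false in general.

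What actually closes the argument (Lemma~\ref{lem:pos} in the paper) is a propagation rather than a one-shot Poincar\'e inequality: once $u_{M_0}^{n+1}\ge c>0$ at the anchor cell $M_0$, every diamond $\Ds\in\DD_{M_0}$ satisfies $r^\Ds(u_\T^{n+1})\ge c/4$ by \eqref{eq:rD}, so on \emph{those} diamonds the bound on $\widehat\bbI_\T^{n+1}$ (deduced from $\bbI_\T^{n+1}\le C_{\dt}$ via Lemma~\ref{lem:Fisher-0}) yields $\left|\log u_{K}^{n+1}-\log u_{L}^{n+1}\right|\le C_{\dt}$, hence a lower bound on the neighbours of $M_0$; these neighbours in turn serve as new anchors, and a finite induction over the connected primal mesh gives $\log u_K^{n+1}\ge -C_{\dt,\T}$ for all $K\in\ov\M$. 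Only then is the penalization term used, exactly as you suggest, to transfer the lower bound from the primal to the dual unknowns. If you replace your Poincar\'e step by this induction, your proof is complete and essentially coincides with the paper's.
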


Once the existence of $u_\T^{n+1}$ at hand for all $n \ge 0$, we can reconstruct the approximate solutions 
$u_{h,\dt}$, $u_{h,\dt, \M}$, and $u_{h,\dt, \M^*}$. The convergence of these approximate solutions towards 
a weak solution when the mesh size and the time step tend to 0 is then a very natural question. 
This question is addressed in Theorem~\ref{thm:main2}. 

In what follows, $\left(\T_m\right)_{m\ge 1} = \left(\ov \M_m, \ov{\M_m^*}\right)_{m\ge1}$ denotes a sequence of 
admissible discretization of $\O$ and $\left(\DD_m\right)_{m\ge1}$ denotes the corresponding diamond mesh. 
We assume that the
\be\label{eq:mesh-reg_m}
\size(\T_m) \underset{m\to\infty}\longrightarrow 0, \quad \text{whereas}\quad 
\underset{m\to\infty}{\textrm{limsup}} \max_{\Ds \in \DD_m} \max \left(\theta_\Ds,\widetilde\theta_\Ds\right) \leq \theta^\star,
\ee
the regularity factors $\theta_\Ds$ and $\widetilde \theta_\Ds$ being defined by~\eqref{eq:thetaD}. 

Concerning the time discretization, we consider a sequence $\left({N_{T,m}}\right)_{m\ge1}$ of positive integers tending to $+\infty$, 
and we denote by $\left(\dt_m\right)_{m\ge1} = \left(\frac{T}{N_{T,m}}\right)_{m\ge1}$ the corresponding sequence of time steps. 
For technical reasons that will appear later on, and even though this condition does not seem to be mandatory from a practical point of view, 
we have to make the assumption that there exists some constant $\ctel{CFL}>0$ such that 
\be\label{eq:CFL}
\dt_m \geq \cter{CFL} \size(\T_m), \qquad \forall m\ge 1.
\ee 

The existence of a discrete solution $u_{\T_m}^{n+1}$ to the scheme~\eqref{scheme} for all $n\in \{0, \dots, N_{T,m}-1\}$ 
and all $m \ge 1$ stated in Theorem~\ref{thm:main1} allows us to define the approximate solutions 
$u_{h_m, \dt_m}$, $u_{h_m, \dt_m, \M_m}$, and $u_{h_m, \dt_m, \ov{\M^*_m}}$ for all $m\ge 1$. 
The next theorem ensures that, up to a subsequence, the sequences of approximate solution converge 
towards a weak solution of the problem~\eqref{pb}.

\begin{thm}[Convergence towards a weak solution]\label{thm:main2}
Assume that \eqref{eq:mesh-reg_m} and \eqref{eq:CFL} holds. Then there exists a weak solution $u$ in the 
sense of Definition~\ref{Def:weak} such that, 
up to a subsequence, 
$$
u_{h_m, \dt_m, \M_m} \underset{m\to\infty}\longrightarrow u, \qquad
u_{h_m, \dt_m, \M_m^*} \underset{m\to\infty}\longrightarrow u, \quad \text{and} \quad 
u_{h_m, \dt_m} \underset{m\to\infty}\longrightarrow u
 \quad \text{in $L^p(0,T;L^1(\O))$}
$$
for all $p \in [1,\infty)$.
\end{thm}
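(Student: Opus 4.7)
The plan is to follow the classical compactness scheme used for nonlinear degenerate parabolic FV methods, carried out in four steps.

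\textbf{Step 1 (a priori estimates).} Summing the discrete energy/dissipation inequality~\eqref{eq:EI_D1} in time and invoking (A1)--(A2) gives a uniform bound on $\bbE_{\T_m}^n$ and on $\sum_n \dt_m \bbI_{\T_m}^n$. The first yields an $L^\infty(0,T;L\log L)$ bound on $u_{h_m,\dt_m}$; combined with discrete mass conservation, this gives $L^\infty(0,T;L^1(\Omega))$ control. Expanding $\delta^\Ds g_{\T_m} = \delta^\Ds\log u_{\T_m} + \delta^\Ds V_{\T_m}$, using the elementary inequality $(\log a-\log b)(a-b)\ge 4(\sqrt a-\sqrt b)^2$, the smoothness of $V$, and the coercivity bounds~\eqref{eq:condAD} for $\bbA^\Ds$, I would extract from $\bbI_{\T_m}^n$ a discrete $L^2(Q_T)$ bound on $\nabla^{h_m}\sqrt{u_{h_m,\dt_m}}$. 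In parallel, the penalization contribution $\kappa\llbracket \mathcal{P}^{\T_m} g_{\T_m},g_{\T_m}\rrbracket_{\T_m}$ together with~\eqref{prop_pen} and $\beta\in(0,2)$ quantifies the primal/dual discrepancy in the spirit of~\cite{ABK2010,CKM15}.

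\textbf{Step 2 (space-time compactness).} From the gradient bound on $\sqrt{u_{h_m,\dt_m}}$ I would apply a DDFV discrete Rellich/Sobolev result (as in \cite{ABH_2007}) to get space-translation estimates on both $u_{h_m,\dt_m,\M_m}$ and $u_{h_m,\dt_m,\ov{\M^*_m}}$. Time translations would be controlled by testing the scheme~\eqref{sch_formcompacte} against a well-chosen discrete function and exploiting the dissipation bound to obtain a $W^{-1,1}$-type estimate of the form $\int_0^{T-\tau}\|u_{h_m,\dt_m}(\cdot,t+\tau)-u_{h_m,\dt_m}(\cdot,t)\|_{L^1}\d t\le C(\tau+\dt_m)$. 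The CFL-like hypothesis~\eqref{eq:CFL} is likely to be invoked here to absorb residual time/space couplings. A Fréchet–Kolmogorov argument then delivers, up to a subsequence, strong $L^p(0,T;L^1(\Omega))$ limits $u$ and $\tilde u$ of the primal and dual reconstructions, with $\sqrt{u}\in L^2(0,T;H^1(\Omega))$ as a by-product.

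\textbf{Step 3 (primal and dual limits coincide).} The control on $\sum_n \dt_m \llbracket\mathcal P^{\T_m} g_{\T_m}^n,g_{\T_m}^n\rrbracket_{\T_m}$ translates through~\eqref{prop_pen} into
\begin{equation*}
\frac{1}{\size(\T_m)^\beta}\iint_{Q_T}\sum_{K,\ke}\mathbf 1_{K\cap\ke}\,(\log u_K^n-\log u_\ke^n)(u_K^n-u_\ke^n)\,\d\x\,\d t\le C,
\end{equation*}
which, by the monotonicity of $\log$ and Fatou, forces $u=\tilde u$ a.e. in $Q_T$. Consequently $u_{h_m,\dt_m}$ also converges strongly to $u$ in $L^p(0,T;L^1(\Omega))$ for all $p<\infty$.

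\textbf{Step 4 (passing to the limit in the weak formulation).} For $\varphi\in C^\infty_c(\ov\Omega\times[0,T))$ I would take $\psi_{\T_m}^n$ to be the nodal interpolate of $\varphi(\cdot,t^n)$ in~\eqref{sch_formcompacte}, sum in $n$, and use the discrete duality $\langle\div^{\T_m}\mathbf J,\psi_{\T_m}\rangle=-(\mathbf J,\nabla^{\DD_m}\psi_{\T_m})_{\DD_m}$ to recover a discrete version of~\eqref{eq:weak}. Reformulating the discrete flux $r^{\DD_m}[u_{\T_m}]\bLambda^{\DD_m}\nabla^{\DD_m} g_{\T_m}$ as $2\sqrt{r^{\DD_m}[u_{\T_m}]}\bLambda^{\DD_m}\nabla^{\DD_m}\sqrt{u_{\T_m}}+r^{\DD_m}[u_{\T_m}]\bLambda^{\DD_m}\nabla^{\DD_m} V_{\T_m}$ plus a vanishing commutator allows me to combine the strong $L^2$ convergence of $\sqrt{u_{h_m,\dt_m}}$ from Step 2 with the weak $L^2$ convergence of $\nabla^{h_m}\sqrt{u_{h_m,\dt_m}}$, while the penalization term vanishes because it is bounded by $\size(\T_m)^{2-\beta}$ after Cauchy--Schwarz. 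The main obstacle will be Step 4: carefully identifying the weak limit of the product $\sqrt{r^{\DD_m}[u_{\T_m}]}\nabla^{\DD_m}\sqrt{u_{\T_m}}$ with $\sqrt{u}\nabla\sqrt{u}$ requires reconciling the diamond-based reconstruction $r^\Ds$ with the primal/dual pointwise limits, and controlling the consistency errors between the chain-rule identities at the continuous and discrete levels. A secondary technical point is to ensure that the nodal interpolation errors on $V$ and $\varphi$ vanish uniformly under the regularity~\eqref{eq:mesh-reg_m}.
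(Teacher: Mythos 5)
Your overall architecture (energy/dissipation estimate, discrete Fisher information, compactness, penalization to identify the primal and dual limits, chain-rule splitting of the flux to pass to the limit) matches the paper's, and your Step 4 is essentially Proposition~\ref{prop:identify}, including the correct identification of the commutator between $r^\Ds$ and the discrete chain rule as the delicate remainder. However, two points in Steps 1--2 are genuine gaps rather than omitted routine details. The first concerns time compactness: you claim an integrated strong $L^1$ time-translate estimate $\int_0^{T-\tau}\|u_{h,\dt}(\cdot,t+\tau)-u_{h,\dt}(\cdot,t)\|_{L^1}\,\d t\le C(\tau+\dt_m)$ obtained by testing the scheme against a well-chosen discrete function. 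But the only time-derivative information available is the discrete $L^1(0,T;W^{-1,1})$ bound of Lemma~\ref{lem:u_t}, i.e.\ a bound in duality with discrete $W^{1,\infty}$; you cannot pair $\p_{t,\T}u$ with $u(\cdot,t+\tau)-u(\cdot,t)$ itself, since $u$ carries no Lipschitz bound, and the spatial regularity you control is that of $\sqrt{u}$, not of $u$. Reconciling these mismatched pieces of information to get strong compactness of $u=(\sqrt u)^2$ is exactly the nonlinear Aubin--Lions problem; the paper resolves it by verifying the hypotheses of \cite[Theorem 3.9]{ACM15_preprint} and then upgrading the resulting a.e.\ convergence via Vitali's theorem. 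An Alt--Luckhaus-type argument could substitute, but as stated your Step 2 does not close.

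The second gap is the control of $\sum_{\Ds\in\DD}\md\, r^\Ds(\ut^n)$, which is needed both to absorb the $V$-cross term when extracting the Fisher information from $\bbI_\T^n$ (Lemmas~\ref{lem:mass-ud} and \ref{lem:Fisher-0}) and in the $W^{-1,1}$ estimate on the time increments. This quantity is \emph{not} controlled by mass conservation alone: the bilinear form $\llbracket\cdot,\cdot\rrbracket_\T$ assigns no mass to the primal boundary cells $L\in\dr\M$, whereas $r^\Ds$ does see $u_L$ on boundary diamonds. The paper handles this with the trace inequality of Theorem~\ref{thmtracebis} (which reintroduces $\|\nabla^h\xi_h\|_2$, absorbed thanks to the ${\rm size}(\T)$ prefactor), and it is here --- via Lemma~\ref{lem:estsolapp}(iii) --- that the hypothesis~\eqref{eq:CFL} is actually used, not in the time translates as you guessed. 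A minor further slip: the penalization contribution in your Step 3 is $\size(\T)^{-\beta}\sum {\rm m}_{K\cap\ke}(g_K-g_\ke)^2$ with $g=\log u+V$, not the monotone product $(\log u_K-\log u_\ke)(u_K-u_\ke)$; the conclusion $u^{(1)}=u^{(2)}$ still follows, in fact more directly, from the $L^2$ smallness of $\log(u_{h,\dt,\M})-\log(u_{h,\dt,\M^*})$ combined with the a.e.\ convergence already established.
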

Further convergence properties are established during the proof of Theorem~\ref{thm:main2}.
We don't make them explicit here in order to minimize the notations and to improve the readability of the paper. 
We refer to Section~\ref{ssec:compact} for refined statements. 

\begin{rem}
The convergence of the scheme is only assessed up to a subsequence in Theorem~\ref{thm:main2}. 
This comes from the fact that the uniqueness of weak solutions in the sense of Definition~\ref{Def:weak} 
is still an open problem even for initial data $u_0$ belonging to $L^2(\O)$. However, we conjecture 
that for $u_0$ being such that $H(u_0)$ belongs to $L^1(\O)$, 
the weak solutions in the sense of Definition~\ref{Def:weak} are renormalized solutions 
(see for instance~\cite{BP05}). Uniqueness should follow, implying the convergence of the whole sequence. 
\end{rem}

The main goal of the paper is to prove Theorems~\ref{thm:main1} and \ref{thm:main2}. 
The proof is articulated as follows: In Section~\ref{sec:existence}, we 
derive some estimates on the discrete solution. These {\em a priori} estimates allow us to show that the nonlinear 
system originating from the~\eqref{scheme} admits (at least) one solution, as claimed in Theorem~\ref{thm:main1}. 
Most of the estimates derived in Section~\ref{sec:existence} are uniform w.r.t. $m$. This provides enough compactness 
on the approximate solutions to pass to the limit $m\to\infty$ in Section~\ref{sec:convergence}.

\section{Energy and dissipation estimates, existence of a solution to the scheme}\label{sec:existence}

\subsection{{\em a priori} estimates}\label{ssec:estimates2}

The first statement of this section is devoted to what we call the fundamental estimates, that are discrete counterparts of the conservation of mass and 
of the energy/dissipation relation~\eqref{eq:EI}. All the further {\em a priori} estimates on the discrete solution are based on these two estimates. 

\begin{prop}[fundamental estimates]\label{prop:base}
Let $\left(u_\T^n\right)_{n\ge1}$, with $u_\T^n \in \left(\R_+^*\right)^\T$ for all $n\geq 0$, be  a solution to the scheme~\eqref{scheme} corresponding to the initial data $u_0$. Then,
\begin{enumerate}[(i)]
\item the mass is conserved along time, i.e., 
\be\label{eq:mass}
\int_\O u_{h}^n \d\x = \llbracket \ut^n, 1_\T \rrbracket_\T = \int_\O u_0 \d\x, \qquad \forall n \ge 0,
\ee
\item the discrete free energy is dissipated along time, i.e., 
\be\label{eq:EI_D2}
\frac{\bbE^{n+1}_\T - \bbE^{n}_\T}\dt + \; \bbI_\T^{n+1} + \kappa\,\llbracket \Pet g_\T^{n+1}, g_\T^{n+1}\rrbracket_\T  \leq 0, \qquad \forall n \ge 0.
\ee
\end{enumerate}
Moreover, the discrete free energy is decaying along time and is bounded: 
\be\label{en_decay}
0 \leq \bbE_\T^{n+1} \leq \bbE^n_\T \leq \bbE_\T^0 \leq \int_\O H(u_0) \d\x  + \|V\|_\infty \|u_0\|_{L^1(\O)}
\ee
and the ``integrated over time'' dissipation is also bounded:
\be\label{dissipation_control}
0\leq \sum_{n=1}^{N_T}\Delta t \bbI_\T^{n}\leq  \sum_{n=1}^{N_T}\Delta t (\bbI_\T^{n}+\kappa\,\llbracket \Pet g_\T^{n}, g_\T^{n}\rrbracket_\T )\leq\int_\O H(u_0) \d\x  + \|V\|_\infty \|u_0\|_{L^1(\O)}.
\ee
\end{prop}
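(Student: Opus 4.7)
The plan is to derive both \eqref{eq:mass} and \eqref{eq:EI_D2} directly from the variational formulation~\eqref{sch_formcompacte} by choosing two particular test functions, and then to deduce \eqref{en_decay} and \eqref{dissipation_control} by telescoping.

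For the mass conservation \eqref{eq:mass}, I would plug $\psi_\T = 1_\T$ (the constant function equal to $1$ on every primal and dual cell) into~\eqref{sch_formcompacte}. Since $\delta^\Ds 1_\T = 0$ for every diamond $\Ds$, the diffusion term $T_\DD(\ut^{n+1};g_\T^{n+1},1_\T)$ vanishes by~\eqref{sch_defTd}. Similarly, the penalization term vanishes thanks to identity~\eqref{prop_pen} applied with $\vt = 1_\T$. What remains is $\llbracket \ut^{n+1} - \ut^n, 1_\T\rrbracket_\T = 0$, so $\llbracket \ut^{n+1},1_\T\rrbracket_\T = \llbracket \ut^n,1_\T\rrbracket_\T$. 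Iterating down to $n=0$ and using the definition \eqref{eq:u0K} of $\ut^0$ (note that $\dr\M$ cells carry zero measure and the primal and dual meshes both partition $\O$) yields $\llbracket \ut^n,1_\T\rrbracket_\T = \int_\O u_0\,\d\x$, which is \eqref{eq:mass}.

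For the entropy/dissipation inequality \eqref{eq:EI_D2}, I would test~\eqref{sch_formcompacte} with $\psi_\T = g_\T^{n+1} = \log(\ut^{n+1}) + V_\T$ (this is well-defined by the assumption $\ut^{n+1} \in (\R_+^*)^\T$). The diffusion term then coincides with $\bbI_\T^{n+1}$ by definition~\eqref{eq:bbITn}, while the penalization term becomes $\kappa \llbracket \Pet g_\T^{n+1}, g_\T^{n+1}\rrbracket_\T$, which is non-negative by~\eqref{prop_pen} (it equals $\tfrac{\kappa}{2\size(\T)^\beta}\sum_{K\in\M}\sum_{K^*\in\ov{\Mie}} m_{K\cap K^*}(g_K^{n+1}-g_{K^*}^{n+1})^2$). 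The key step is the discrete chain rule / convexity trick applied to the time-derivative term: since $H$ is convex with $H'(s) = \log s$, one has the cell-wise inequality
\[
H(u_K^{n+1}) - H(u_K^n) \leq \log(u_K^{n+1})(u_K^{n+1}-u_K^n),
\]
and analogously on dual cells. Combining these with the identity $\llbracket V_\T, \ut^{n+1}-\ut^n\rrbracket_\T = \llbracket V_\T,\ut^{n+1}\rrbracket_\T - \llbracket V_\T,\ut^n\rrbracket_\T$ (no time dependence in $V_\T$) gives
\[
\bbE_\T^{n+1} - \bbE_\T^n \;\leq\; \llbracket \ut^{n+1}-\ut^n, g_\T^{n+1}\rrbracket_\T,
\]
and substituting into the tested scheme yields~\eqref{eq:EI_D2}. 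This convexity inequality is the only delicate point of the proof; everything else is linear bookkeeping.

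It remains to derive the bounds \eqref{en_decay} and \eqref{dissipation_control}. Non-negativity of $\bbE_\T^n$ follows from $H\geq 0$ (since $H$ is convex, minimized at $s=1$ with $H(1)=0$), from $V \ge 0$ by~(A2), and from $\ut^n \ge 0$. Since $\bbI_\T^{n+1} \ge 0$ (as $r^\Ds(\ut^{n+1}) > 0$ because $\ut^{n+1}>0$, and $\bbA^\Ds$ is positive semi-definite by~\eqref{eq:AD}) and the penalization term is non-negative, \eqref{eq:EI_D2} gives $\bbE_\T^{n+1} \le \bbE_\T^n$, hence by induction $\bbE_\T^{n} \le \bbE_\T^0$. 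The bound on $\bbE_\T^0$ comes from Jensen's inequality applied to the convex function $H$ on each cell (using definition~\eqref{eq:u0K}): $m_K H(u_K^0) \le \int_K H(u_0)\,\d\x$ and the analogous inequality on dual cells, together with $\llbracket V_\T,\ut^0\rrbracket_\T \le \|V\|_\infty \|u_0\|_{L^1(\O)}$. Finally, summing \eqref{eq:EI_D2} from $n=0$ to $n=N_T-1$, multiplying by $\dt$, and telescoping the left-hand side yields
\[
\sum_{n=1}^{N_T}\dt\,\bigl(\bbI_\T^n + \kappa\llbracket \Pet g_\T^n, g_\T^n\rrbracket_\T\bigr) \;\le\; \bbE_\T^0 - \bbE_\T^{N_T} \;\le\; \bbE_\T^0,
\]
which together with the upper bound on $\bbE_\T^0$ gives \eqref{dissipation_control}.
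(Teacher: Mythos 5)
Your proof is correct and follows exactly the same route as the paper: testing \eqref{sch_formcompacte} with $\psi_\T=1_\T$ for mass conservation, with $\psi_\T=g_\T^{n+1}$ together with the convexity of $u\mapsto H(u)+uV$ for the energy/dissipation inequality, then Jensen's inequality for the bound on $\bbE_\T^0$ and telescoping for \eqref{dissipation_control}. The only difference is that you spell out the cell-wise convexity inequality and the vanishing of the diffusion and penalization terms, which the paper leaves implicit.
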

\begin{proof}
Equation~\eqref{eq:mass} is obtained directly by choosing $\psi_\T = 1_\T$ in \eqref{sch_formcompacte}. In order to get Estimate~\eqref{eq:EI_D2}, 
it suffices to take $\psi_\T = g_\T^{n+1}$ in \eqref{sch_formcompacte} and to remark that, because of the convexity of $u \mapsto H(u) + uV$, one has 
$\llbracket \ut^{n+1} - \ut^n, g_\T^{n+1} \rrbracket_\T \ge \bbE_\T^{n+1} - \bbE^n_\T.$
Inequality \eqref{en_decay} is just a consequence of \eqref{eq:EI_D2}, of the nonnegativity of the dissipation and of the penalization term and of Jensen's inequality.  By summation of \eqref{eq:EI_D2} over $n$, we deduce \eqref{dissipation_control}.
\end{proof}

The goal of the remaining part of this section is to take advantage of the fundamental estimates 
of Proposition~\ref{prop:base} to derive some further estimates to be used in the numerical analysis. 
As in the continuous framework, the energy/dissipation estimate~\eqref{eq:EI_D2} is used in order to estimate the discrete counterpart 
of the Fisher information. But in the discrete framework, the chain rule appearing in~\eqref{eq:Fisher-chainrule} does not longer hold, 
and we have to manipulate several objects related to the Fisher information. 
The last goal of this section is to prove that, for a fixed grid and a fixed time step, the discrete solutions $\left(u_\T^n\right)_{n\ge1}$ 
is uniformly bounded away from 0, cf. Lemma~\ref{lem:pos}. 

Define the discrete fields $\xi_\T^n= \sqrt{u_\T^n}$ which play a key role as in the continuous level. 
In order to relate different discrete counterparts of the Fisher information, we first have to derive some properties on the 
local diffusion matrices $\bbA^\Ds$, $\Ds \in \DD$.

Let $\Ds \in \DD$, then we define the diagonal matrix $\bbB^\Ds$ by 
\be\label{eq:bbB}
\bbB^\Ds = \begin{pmatrix} B^\Ds_\sig & 0 \\ 0 & B^\Ds_{\sige}\end{pmatrix} =
\begin{pmatrix} \left|A^\Ds_{\sig,\sig}\right| + \left|A^\Ds_{\sig,\sige}\right| & 0 \\ 0 & 
\left|A^\Ds_{\sige,\sige}\right| + \left|A^\Ds_{\sig,\sige}\right| 
\end{pmatrix}.
\ee
For all $\bw = (w_\sig, w_{\sige})^T \in \R^2$ and all $\Ds \in \DD$, there holds
$$
\bbA^\Ds \bw \cdot \bw \leq \bbB^\Ds \bw \cdot \bw \leq \|\bbA^\Ds\|_1 |\bw|^2, 
$$
where $\|\cdot\|_q$ is the usual matrix $q$-norm and $|\cdot|$ is the Euclidian norm on $\R^2$.
It follows from the equivalence of the matrix $1$- and $2$- norms on the finite dimensional space $\R^{2\times2}$ that 
$$
\|\bbA^\Ds\|_1 |\bw|^2 \leq \gamma \|\bbA^\Ds\|_2 |\bw|^2 
\leq \gamma \text{Cond}_2(\bbA^\Ds)  \bbA^\Ds \bw \cdot \bw, \qquad \forall \bw \in \R^2
$$
for some $\gamma\ge 1$.
Therefore, in view of~\eqref{eq:condAD}, we get the existence of $\ctel{cte:ABequiv}$ depending only on $\theta^\star$, $\lambda_m$ and $\lambda^M$ 
such that 
\be\label{eq:ABequiv}
\bbA^\Ds \bw \cdot \bw \leq \bbB^\Ds \bw \cdot \bw \leq \cter{cte:ABequiv} \; \bbA^\Ds \bw \cdot \bw, \qquad \forall \Ds \in \DD, \; \forall \bw \in \R^2. 
\ee

We introduce now a discrete counterpart of $\int_\Omega u\Lambda\nabla \log u\cdot \nabla \log u$, it is the quantity $\widehat\bbI_\T^{n}$  defined by
\be\label{eq:whI}
\widehat \bbI_\T^{n+1} = \sum_{\Ds \in \DD}  r^\Ds(\ut^{n+1}) \; \delta^\Ds \log(u_\T^{n+1}) \cdot \bbB^\Ds \delta^\Ds \log(u_\T^{n+1}), 
\qquad n \ge 0. 
\ee
Let us first relate this quantity to a discrete Fisher information.
\begin{lem}\label{lem:Fisher}
For all $n \ge 0$, there holds
$$
\left\|\gradDD \xi_\T^{n+1} \right\|_{\bLambda, \DD}^2
\leq   \widehat \bbI_\T^{n+1}.
$$
\end{lem}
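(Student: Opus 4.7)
The plan is to reduce the bound to a diamond-by-diamond comparison, then to a one-dimensional inequality between $(\sqrt a - \sqrt b)^2$ and $(\log a-\log b)^2$ that is classical for $a,b>0$. First, using the identity
\[
\left\|\gradDD \xi_\T^{n+1}\right\|_{\bLambda,\DD}^2
= \sum_{\Ds\in\DD} \delta^\Ds \xi_\T^{n+1} \cdot \bbA^\Ds \delta^\Ds \xi_\T^{n+1},
\]
together with the left inequality in~\eqref{eq:ABequiv}, I replace $\bbA^\Ds$ by the diagonal matrix $\bbB^\Ds$ at the cost of losing the off-diagonal coupling between the primal and dual contributions but gaining a diamond-wise decoupled expression.

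Once $\bbB^\Ds$ is diagonal, the desired inequality amounts, on each diamond $\Ds$, to proving the two scalar bounds
\[
B_\sig^\Ds\bigl(\xi_\k^{n+1}-\xi_\l^{n+1}\bigr)^2 \le r^\Ds(\ut^{n+1})\,B_\sig^\Ds\bigl(\log u_\k^{n+1}-\log u_\l^{n+1}\bigr)^2
\]
and its $\sige$-analogue. Since $u_\T^{n+1}\in(\R_+^\ast)^\T$ by assumption, the quantities $\log u_\k^{n+1}$ and $\sqrt{u_\k^{n+1}}=\xi_\k^{n+1}$ make sense. The key one-dimensional ingredient is the logarithmic mean inequality $\sqrt{ab}\le (a-b)/(\log a-\log b)\le (a+b)/2$ applied to $\sqrt a,\sqrt b$ in place of $a,b$, which yields
\[
(\sqrt a-\sqrt b)^2 \le \frac{(\sqrt a+\sqrt b)^2}{16}(\log a-\log b)^2\le \frac{a+b}{8}(\log a-\log b)^2, \qquad \forall a,b>0.
\]

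The proof is then completed by the elementary observation that, by the definition~\eqref{eq:rD} of $r^\Ds(\ut^{n+1})$ and the positivity of all nodal values of $u_\T^{n+1}$,
\[
\frac{u_\k^{n+1}+u_\l^{n+1}}{8}\le \frac{u_\k^{n+1}+u_\l^{n+1}+u_\ke^{n+1}+u_\le^{n+1}}{4}=r^\Ds(\ut^{n+1}),
\]
and likewise with $\ke,\le$ in place of $\k,\l$. Plugging these two diamond-wise bounds into the $\bbB^\Ds$-expression and summing over $\Ds\in\DD$ yields exactly the right-hand side $\widehat \bbI_\T^{n+1}$ defined in~\eqref{eq:whI}, which concludes the argument. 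I do not expect any serious obstacle: the nontrivial step is simply the logarithmic mean inequality, and everything else is bookkeeping; positivity of $u_\T^{n+1}$ (guaranteed by the structure of the scheme) is the only prerequisite.
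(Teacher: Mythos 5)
Your proof is correct and follows essentially the same route as the paper: first replace $\bbA^\Ds$ by the diagonal dominant matrix $\bbB^\Ds$ via the left inequality of~\eqref{eq:ABequiv}, then establish a per-diamond scalar bound of the form $(\xi_\k-\xi_\l)^2\le r^\Ds(\ut)\,(\log u_\k-\log u_\l)^2$ and sum. The only (immaterial) difference is the elementary inequality invoked: the paper uses $|\sqrt b-\sqrt a|\le \tfrac12\max(\sqrt a,\sqrt b)\,|\log b-\log a|$ together with $\max(u_\k,u_\l,u_\ke,u_\le)\le 4\,r^\Ds(\ut)$, whereas you use the logarithmic-mean bound to get the prefactor $(a+b)/8\le r^\Ds(\ut)$ --- both yield the same conclusion.
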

\begin{proof}
Thanks to the first inequality of~\eqref{eq:ABequiv}, one has
\be\label{eq:Fisher-00}
\left(\gradDD \xi_\T^{n+1},\gradDD \xi_\T^{n+1}\right)_{\bLambda, \DD}
\leq \sum_{\Ds \in \DD} \delta^\Ds \xi_\T^{n+1} \cdot \bbB^\Ds\delta^\Ds \xi_\T^{n+1}.
\ee
It results from the elementary inequality 
$$
\left|\sqrt{b} - \sqrt{a}\right| \leq \frac{\max(\sqrt a,\sqrt b)}2 \left|\log(b)-\log(a)\right|, \qquad \forall (a,b) \in \left(\R_+^*\right)^2,
$$
that for all $\Ds \in \DD$, one has 
\begin{align*}
\left|\xi_K^{n+1} - \xi_L^{n+1}\right| \leq &
\frac{\max(\xi_K^{n+1},\xi_L^{n+1},\xi_{K^*}^{n+1},\xi_{L^*}^{n+1})}2 \left|\log(u_{K}^{n+1})-\log(u_{L}^{n+1})\right|, \\
\left|\xi_{K^*}^{n+1} - \xi_{L^*}^{n+1}\right| \leq &
\frac{\max(\xi_K^{n+1},\xi_L^{n+1},\xi_{K^*}^{n+1},\xi_{L^*}^{n+1})}2 \left|\log(u_{K^*}^{n+1})-\log(u_{L^*}^{n+1})\right|.
\end{align*}
This yields 
$$
\delta^\Ds \xi_\T^{n+1} \cdot \bbB^\Ds\delta^\Ds \xi_\T^{n+1}
 \leq \frac{\max({u_K^{n+1}},{u_L^{n+1}},{u_{K^*}^{n+1}},{u_{L^*}^{n+1}})}4 
 \delta^\Ds \log({\ut^{n+1}}) \cdot \bbB^\Ds\delta^\Ds \log({\ut^{n+1}}), 
 $$
 and since 
 $\max({u_K^{n+1}},{u_L^{n+1}},{u_{K^*}^{n+1}},{u_{L^*}^{n+1}}) \leq 4 r^\Ds(\ut^{n+1})$, one gets 
\be\label{eq:Fisher-01}
 \delta^\Ds \xi_\T^{n+1} \cdot \bbB^\Ds\delta^\Ds \xi_\T^{n+1} \leq r^\Ds(\ut^{n+1})  \delta^\Ds \log({\ut^{n+1}}) \cdot \bbB^\Ds\delta^\Ds \log({\ut^{n+1}}), \quad \forall \Ds \in \DD.
\ee
In order to conclude the proof of Lemma~\ref{lem:Fisher}, it only remains to combine~\eqref{eq:Fisher-00} and 
\eqref{eq:Fisher-01}.
\end{proof}

We now want to get a bound on $\widehat \bbI^{n+1}_\T$ in order to deduce some bound on $\Vert \nabla^\DD \xi_\T^{n+1}\Vert_{{\mathbf \Lambda},\DD}$. Therefore, we first need to establish an estimate on the discrete reconstruction by diamond $r^\DD[u_\T^{n+1}]$.

\begin{lem}\label{lem:mass-ud}
Let $r^\DD[\ut^{n+1}] \in \R^\DD$ be defined by~\eqref{eq:rD}.
There exists $C>0$, depending only on $\O$, $\lambda_m$, $\lambda^M$ and $\theta^\star$ such that 
\be\label{eq:mass-ud}
\sum_{\Ds \in \DD} \md \,r^\Ds(\ut^{n+1}) \leq C\, (1+{\rm size}(\T))\int_\O u_0 \d\x + C\, {\rm size}(\T)\, \widehat \bbI_\T^{n+1}, \qquad \forall n \ge 0.
\ee
\end{lem}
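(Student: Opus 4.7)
The plan is to expand the left-hand side as
\[
\sum_{\Ds\in\DD}\md\, r^\Ds(\ut^{n+1})=\tfrac14\sum_{\Ds\in\DD}\md\left(u_K^{n+1}+u_L^{n+1}+u_{K^*}^{n+1}+u_{L^*}^{n+1}\right),
\]
and bound each of the four families separately. For the interior-primal and dual contributions, I would reorganize as sums over the cells: using the regularity estimate \eqref{eq:reg-meas} and the mass conservation \eqref{eq:mass},
\[
\sum_{\Ds\in\DD}\md\, u_K^{n+1}\le\sum_{K\in\M}u_K^{n+1}\sum_{\Ds\in\DD_K}\md\le\theta^\star\sum_{K\in\M}\mk u_K^{n+1}\le 2\theta^\star\int_\O u_0\,\d\x,
\]
and similarly $\sum_\Ds \md(u_{K^*}^{n+1}+u_{L^*}^{n+1})\le 2\theta^\star\int_\O u_0\,\d\x$, as each diamond carries exactly two dual vertices. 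The term $\sum_{\Ds\in\DD^{int}}\md\, u_L^{n+1}$ is treated in the same way, since then $L\in\M$.

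The only delicate contribution is the ``boundary primal'' one, namely $\sum_{\Ds\in\DD^{ext}}\md\, u_L^{n+1}$, where $L\in\dr\M$ is a degenerate cell with $m_L=0$, so mass conservation gives no direct control. To handle it, set $\xi_\T^{n+1}=\sqrt{\ut^{n+1}}$ and use the pointwise inequality
\[
u_L^{n+1}=(\xi_L^{n+1})^2\le 2 u_K^{n+1}+2(\xi_L^{n+1}-\xi_K^{n+1})^2,
\]
where $K\in\M$ is the interior primal vertex of $\Ds=\Ds_{\sig,\sig^*}\in\DD^{ext}$. To bound the gradient-type term, I would isolate the $(\xi_L-\xi_K)^2$ contribution from the diagonal entry $B_\sig^\Ds$ of $\bbB^\Ds$; using $B_\sig^\Ds\ge A_{\sig,\sig}^\Ds\ge\lambda_m m_\sig^2/(4\md)$, one gets
\[
\md(\xi_L^{n+1}-\xi_K^{n+1})^2\le\frac{\md}{A_{\sig,\sig}^\Ds}\,\delta^\Ds\xi_\T^{n+1}\cdot\bbB^\Ds\delta^\Ds\xi_\T^{n+1}\le\frac{{\rm m}_{\sig^*}^2}{\lambda_m}\,\delta^\Ds\xi_\T^{n+1}\cdot\bbB^\Ds\delta^\Ds\xi_\T^{n+1},
\]
and ${\rm m}_{\sig^*}\le\size(\T)$. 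Invoking the chain-rule estimate \eqref{eq:Fisher-01} established in the proof of Lemma~\ref{lem:Fisher} and summing over $\DD^{ext}$ yields
\[
\sum_{\Ds\in\DD^{ext}}\md(\xi_L^{n+1}-\xi_K^{n+1})^2\le\frac{\size(\T)^2}{\lambda_m}\,\widehat\bbI_\T^{n+1}.
\]

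Combining with the interior estimate on $\sum_\Ds\md u_K^{n+1}$ produces
\[
\sum_{\Ds\in\DD^{ext}}\md\, u_L^{n+1}\le 4\theta^\star\int_\O u_0\,\d\x+\frac{2\size(\T)^2}{\lambda_m}\widehat\bbI_\T^{n+1},
\]
and since $\size(\T)^2\le\diam(\O)\,\size(\T)$, this is of the desired form. Putting the four contributions together delivers the claimed bound, with $C$ depending only on $\O$, $\lambda_m$, $\lambda^M$, and $\theta^\star$. The main obstacle is precisely this boundary-primal piece: without mass control on $u_L$ for $L\in\dr\M$, one must trade a small factor of $\size(\T)^2$ against the Fisher-information-type quantity $\widehat\bbI_\T^{n+1}$ through the square-root/log chain rule of Lemma~\ref{lem:Fisher}; the interior bookkeeping and the interior/dual bulk bounds are then a routine consequence of \eqref{eq:reg-meas} and \eqref{eq:mass}.
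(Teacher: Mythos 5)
Your proof is correct, but it handles the one delicate term --- the boundary-primal contribution $\sum_{\Ds \in \DD^{ext}} \md\, u_L^{n+1}$ with $L \in \dr\M$ degenerate --- by a genuinely different route than the paper. The paper bounds $\md \leq \tfrac12 \msig\, {\rm size}(\T)$ for boundary diamonds, recognizes the resulting sum as $\tfrac12\,{\rm size}(\T)\,\|\gamma_{\p\M}\xi_\T^{n+1}\|_{L^2(\p\O)}^2$, and invokes the global discrete trace inequality of Theorem~\ref{thmtracebis} together with~\eqref{eq:normegradsqrtut} and Lemma~\ref{lem:Fisher} to land on $C\,{\rm size}(\T)\bigl(\int_\O u_0 + \widehat\bbI_\T^{n+1}\bigr)$. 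You instead argue purely locally on each boundary diamond: the Young-type splitting $u_L \leq 2u_K + 2(\xi_L-\xi_K)^2$, the lower bound $B_\sig^\Ds \ge A_{\sig,\sig}^\Ds \ge \lambda_m \msig^2/(4\md)$ on the diagonal entry (which, via $\md = \tfrac12\msig\msige\sin(\alpha_\Ds)$, gives the factor $\msige^2/\lambda_m \leq {\rm size}(\T)^2/\lambda_m$), and then the same chain-rule estimate~\eqref{eq:Fisher-01} to convert $\delta^\Ds\xi_\T^{n+1}\cdot\bbB^\Ds\delta^\Ds\xi_\T^{n+1}$ into a piece of $\widehat\bbI_\T^{n+1}$. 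Your argument is self-contained (no appeal to the appendix trace theorem) and in fact marginally sharper, since the Fisher term appears with ${\rm size}(\T)^2$ rather than ${\rm size}(\T)$; the paper's route buys a reusable trace inequality that it needs anyway elsewhere in spirit. The bookkeeping of the interior primal and dual contributions via~\eqref{eq:reg-meas} and~\eqref{eq:mass} is identical in both proofs, and all your intermediate inequalities check out.
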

\begin{proof}
The definition~\eqref{eq:rD} implies that 
\be\label{eq:mass-ud1}
\sum_{\Ds \in \DD} \md \,r^\Ds(\ut^{n+1}) = (T^{n+1} + T^{{n+1,*}})/4, 
\ee
where we have set 
$$
T^{n+1} = \sum_{\Ds \in \DD} \md (u_K^{n+1}+u_L^{n+1})
\quad \text{and}\quad
T^{{n+1,*}}= \sum_{\Ds \in \DD} \md (u_{K^*}^{n+1}+u_{L^*}^{n+1}), 
\qquad \forall n \ge 0.
$$
The terms $T^{n+1}$ and $T^{{n+1,*}}$ can be rewritten
\begin{align*}
T^{n+1}  =& \; \sum_{K \in \M}u_K^{n+1}  \sum_{\Ds \in \DD_K} \md 
+\sum_{L \in \dr\M}u_L^{n+1} {\rm m}_{\Ds_L} 
= T_\M^{n+1} + T_{\p\M}^{n+1},\\
T^{{n+1,*}} = & \; \sum_{K^* \in \ov{\M^*}} u_{K^*}^{n+1} \sum_{\Ds \in \DD_{K^*}} \md,
\end{align*}
where ${\Ds_L}$ denotes the unique diamond cell associated to the primal boundary cell $L \in \p\M$.
The terms $T_\M^{n+1}$ and $T^{{n+1,*}}$ can be estimated thanks to the regularity of the mesh~\eqref{eq:reg-meas} by 
$$
T_\M^{n+1} + T^{{n+1,*}} \leq  \theta^\star \left( \sum_{K \in \M} \mk u_K^{n+1} + \sum_{K^* \in \ov{\M^*}} u_{K^*}^{n+1} \mke \right).
$$
We deduce from~\eqref{eq:mass}
that 
$$
T_\M^{n+1} + T^{{n+1,*}} \leq 2 \theta^\star \int_\O u_0 \d\x.
$$

Let us now focus on the term $T_{\p\M}^{n+1}$. The area of the diamond cell $\Ds$ corresponding to a 
boundary edge $\sigma \subset \p\O$ (or equivalently to a primal boundary cell $L\in \p\M$) can be estimated by 
$$
\md \leq \frac12 {\rm m}_\sig {\rm size}(\T). 
$$
Therefore, we get that 
$$
T_{\p\M}^{n+1} \leq\frac12 {\rm size}(\T) \| \gamma_{\p\M} \ut^{n+1}\|_{L^1(\p\O)} =
 \frac12{\rm size}(\T)\left\| \gamma_{\p\M} \xi_\T^{n+1}\right\|^2_{L^2(\p\O)},
$$
where $\gamma_{\p\M} \ut(\x)=\sumpriext\ul\boldsymbol{1}_{\l}(\x),$ $ \forall \x \in \p\O. $
The trace inequality stated in Theorem~\ref{thmtracebis} gives 
with $\vt = \xi_\T^{n+1}$
$$
T_{\p\M}^{n+1} \leq C\;{\rm size}(\T)\, \left(|\xi_\T^{n+1}|_{2,\T}^2+ \left\|\nabla^h\xi_h^{n+1}\right\|_{2}^2\right).
$$
Thanks to \eqref{eq:elliptic} and the regularity of the mesh \eqref{eq:thetaD} and \eqref{eq:theta_star}, 
there exists $C>0$ depending only on $\lambda_m$, $\lambda^M$ and $\theta^\star$ such that
\begin{equation}\label{eq:normegradsqrtut}
 \left\|\nabla^h\xi_h^{n+1}\right\|_{2}\leq C\left\|\gradDD \xi_\T^{n+1} \right\|_{\bLambda, \DD}
\end{equation}
Since $|\xi_\T^{n+1}|_{2,\T}^2= |\ut^{n+1}|_{1,\T}$, Proposition~\ref{prop:base} and Lemma \ref{lem:Fisher} provide that 
$$
T_{\p\M}^{n+1}\leq C\;{\rm size}(\T) \, \left( \int_\O u_0 \d\x+ \widehat \bbI_\T^{n+1}\right).
$$
\end{proof}

Thanks to \eqref{eq:mass-ud}, it is now possible to relate  $\widehat \bbI^{n+1}_\T$ to $ \bbI^{n+1}_\T$.

\begin{lem}\label{lem:Fisher-0}
There exist $C>0$ and $h^\star>0$ depending only on $u_0$, $V$, $\lambda_m$, $\lambda^M$ and $\theta^\star$ such that 
\be\label{eq:Fisher-0}
\widehat \bbI_\T^{n+1} \leq C\,\left(1+ \bbI_\T^{n+1}\right), \qquad \forall n \ge0, \quad \text{if  ${\rm size}(\T) \leq h^\star$.}
\ee
\end{lem}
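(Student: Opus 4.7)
\medskip

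\noindent\textbf{Proof plan for Lemma~\ref{lem:Fisher-0}.}

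The strategy is to compare $\widehat\bbI_\T^{n+1}$ with $\bbI_\T^{n+1}$ by noticing that $\log(\ut^{n+1}) = g_\T^{n+1} - V_\T$, so that the two quantities differ only through terms controlled by~$V$. First, since $\bbB^\Ds$ and $\bbA^\Ds$ are equivalent as quadratic forms by~\eqref{eq:ABequiv}, one has
\[
\widehat\bbI_\T^{n+1} \leq \cter{cte:ABequiv}\sum_{\Ds\in\DD} r^\Ds(\ut^{n+1})\,\delta^\Ds\log(\ut^{n+1})\cdot\bbA^\Ds\delta^\Ds\log(\ut^{n+1}).
\]
Writing $\delta^\Ds\log(\ut^{n+1}) = \delta^\Ds g_\T^{n+1} - \delta^\Ds V_\T$ and using the elementary inequality $(\bw-\bz)^\top \bbA^\Ds(\bw-\bz) \leq 2\bw^\top\bbA^\Ds\bw + 2\bz^\top\bbA^\Ds\bz$ valid for any symmetric nonnegative matrix, I obtain
\[
\widehat\bbI_\T^{n+1} \leq 2\cter{cte:ABequiv}\,\bbI_\T^{n+1} + 2\cter{cte:ABequiv}\sum_{\Ds\in\DD} r^\Ds(\ut^{n+1})\,\delta^\Ds V_\T\cdot\bbA^\Ds\delta^\Ds V_\T.
\]

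The next step is to estimate the potential term. Since $V\in C^1(\ov\O)$, one has $|V_K-V_L|\leq \|\nabla V\|_\infty\,\msige$ and $|V_{K^*}-V_{L^*}|\leq \|\nabla V\|_\infty\,\msig$ (recall that the endpoints of $\sig$ are $x_{K^*}, x_{L^*}$ and those of $\sige$ are $x_K, x_L$). Plugging these two bounds into the explicit form~\eqref{eq:AD} of $\bbA^\Ds$ and using~\eqref{eq:m_Dd} together with the bound~\eqref{eq:alpha-star} on $\sin(\alpha_\Ds)$ leads to
\[
\delta^\Ds V_\T\cdot\bbA^\Ds\delta^\Ds V_\T \;\leq\; \frac{\lambda^M\,\msig^2\msige^2}{\md}\|\nabla V\|_\infty^2 \;\leq\; \ctel{cte:V}\,\md\,\|\nabla V\|_\infty^2,
\]
for some $\cter{cte:V}$ depending only on $\theta^\star$ and $\lambda^M$.

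Summing over $\Ds$ and invoking Lemma~\ref{lem:mass-ud}, I get
\[
\sum_{\Ds\in\DD} r^\Ds(\ut^{n+1})\,\delta^\Ds V_\T\cdot\bbA^\Ds\delta^\Ds V_\T \;\leq\; \cter{cte:V}\|\nabla V\|_\infty^2\sum_{\Ds\in\DD}\md\,r^\Ds(\ut^{n+1})
\;\leq\; C_1 + C_2\,\size(\T)\,\widehat\bbI_\T^{n+1},
\]
where $C_1$ and $C_2$ depend on $u_0$, $V$, $\O$, $\lambda_m$, $\lambda^M$ and $\theta^\star$. Combining the two inequalities above and fixing $h^\star$ so that $2\cter{cte:ABequiv}C_2\,h^\star \leq 1/2$, the term $\widehat\bbI_\T^{n+1}$ appearing on the right-hand side can be absorbed into the left-hand side whenever $\size(\T)\leq h^\star$, which yields the desired bound $\widehat\bbI_\T^{n+1}\leq C(1+\bbI_\T^{n+1})$.

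The only subtle point is the absorption argument: it requires the prefactor in front of $\widehat\bbI_\T^{n+1}$ on the right to be smaller than~$1$, which is the reason why the estimate holds only for sufficiently fine meshes. Everything else reduces to using the equivalence~\eqref{eq:ABequiv}, the $C^1$-regularity of~$V$ together with the explicit expression~\eqref{eq:AD} of~$\bbA^\Ds$, and Lemma~\ref{lem:mass-ud}.
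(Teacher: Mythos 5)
Your proof is correct and follows essentially the same route as the paper: split $\delta^\Ds\log(\ut^{n+1})=\delta^\Ds g_\T^{n+1}-\delta^\Ds V_\T$ via the quadratic-form inequality, control the potential term by $C\,\md\,\|\grad V\|_\infty^2$ using the mesh regularity, invoke Lemma~\ref{lem:mass-ud}, and absorb the resulting $\size(\T)\,\widehat\bbI_\T^{n+1}$ term for $\size(\T)$ small. The only cosmetic difference is that you carry the matrix $\bbA^\Ds$ through the potential estimate via its explicit form~\eqref{eq:AD}, whereas the paper works with the diagonal matrix $\bbB^\Ds$ throughout; both yield the same bound.
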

\begin{proof}
Bearing in mind the definition \eqref{eq:bbITn} of the dissipation $\bbI_\T^{n+1}$, we deduce thanks to \eqref{eq:ABequiv} that
$$
\bbI_\T^{n+1} \leq  \sum_{\Ds \in \DD}  r^\Ds(\ut^{n+1}) \; \delta^\Ds g_\T^{n+1} \cdot \bbB^\Ds \delta^\Ds g_\T^{n+1} 
\leq \cter{cte:ABequiv}\, \bbI_\T^{n+1}, \qquad \forall n \ge 0.
$$
But, as $g_\T^{n+1}=\log u_\T^{n+1} +V_\T$, the elementary inequality $(a+b)^2\leq 2 (a^
2+b^2)$ implies that 
\be\label{eq:I-whI2}
\widehat \bbI_\T^{n+1}
\leq  2 \, \cter{cte:ABequiv}\, \bbI_\T^{n+1} + 2 \sum_{\Ds \in \DD} r^\Ds(\ut^{n+1}) \delta^\Ds V_\T \cdot \bbB^\Ds \delta^\Ds V_\T.
\ee
It follows from the regularity of $V$ and from the regularity of the mesh~\eqref{eq:theta_star} that
$$
 0 \leq \delta^\Ds V_\T \cdot \bbB^\Ds \delta^\Ds V_\T \leq \|\grad V\|_\infty^2  
 \left(B^\Ds_\sig \msige^2 + B^\Ds_{\sige}\msig^2\right) 
 \leq  C
 \|\grad V\|_\infty^2 \md, 
 \qquad \forall \Ds \in \DD
$$
for some $C$ depending only on $\lambda^M$ and $\theta^\star$.
Therefore, we deduce from Lemma~\ref{lem:mass-ud} that 
\be\label{eq:I-whI3}
\sum_{\Ds \in \DD} r^\Ds(\ut^{n+1})\delta^\Ds V_\T \cdot \bbB^\Ds \delta^\Ds V_\T 
\leq C \sum_{\Ds \in \DD} \md r^\Ds(u_\T^{n+1}) \leq C(1+{\rm size}(\T) \widehat \bbI_\T^{n+1}).
\ee
We infer from~\eqref{eq:I-whI2} and \eqref{eq:I-whI3} that if  ${\rm size}(\T)$ is small enough,~\eqref{eq:Fisher-0} holds.
\end{proof}

We have at hand the necessary tool to address the uniform positivity of the solutions. The next lemma states that the discrete solutions remain 
bounded away from $0$ by a small quantity $\eps>0$ depending on the data of the continuous problem and on the discretization parameters.
This information is of great importance since, because of the singularity of the $\log$, the nonlinear functional corresponding to the scheme is not continuous on the boundary of $\left(\R_+^*\right)^\T$.
The proof is inspired from the ones of \cite[Lemma 3.10]{CG16_MCOM} and \cite[Lemma 3.7]{CG2016}. We sketch it here to 
highlight how we overpass the difficulties related to the fact that there is a limited communication between the primal and dual meshes.

\begin{lem}\label{lem:pos}
There exists $\eps>0$ depending on the data $u_0$ and $V$, 
on the mesh $\T$, on the time step $\dt$, and on the stabilization parameter $\kappa$, such that
\be\label{eq:pos}
u_K^{n+1} \geq \eps\quad\text{and}\quad  u_{\ke}^n\geq \eps, \qquad \forall K \in \ov \M, \, \forall \ke \in \ov{\M^\ast}, \, \forall n \ge 0.
\ee
\end{lem}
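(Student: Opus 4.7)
The plan is to derive a pointwise bound on $g_\T^{n+1} = \log(u_\T^{n+1}) + V_\T$ on the whole of $\T$. Since $V$ is bounded, such a pointwise bound on $g_\T^{n+1}$ immediately yields a strictly positive lower bound on $u_\T^{n+1}$. The bound will be produced in two stages: first an initial pointwise bound at some single cell $P_0$, obtained from mass conservation~\eqref{eq:mass} and the energy bound~\eqref{en_decay}, and then a propagation argument across the primal-dual graph of the mesh, powered by the two nonnegative contributions in~\eqref{eq:EI_D2}. The estimate $\varepsilon$ obtained this way will unavoidably depend on $\T$, $\dt$ and $\kappa$.

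\textbf{Initial bounds.} The mass conservation~\eqref{eq:mass} rewrites as $\sum_{\k\in\M}\mk u_\k^{n+1} + \sum_{\ke\in\ov{\Mie}}\mke u_\ke^{n+1} = 2\|u_0\|_{L^1(\O)}$. Since both the primal and dual meshes cover $\O$, at least one cell $P_0$ (primal or dual) must satisfy $u_{P_0}^{n+1}\ge m_0 := \|u_0\|_{L^1(\O)}/|\O|$, which gives a lower bound on $g_{P_0}^{n+1}$. The bound~\eqref{en_decay} applied cell by cell gives $\mk H(u_\k^{n+1})\le \bbE_\T^0$ and the analogous estimate on every dual cell; on a fixed mesh, this translates into a uniform upper bound on every $u_\k^{n+1}$ and $u_\ke^{n+1}$, hence an upper bound on $g_\T^{n+1}$.

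\textbf{Propagation estimates.} From~\eqref{eq:EI_D2} one has $\bbI_\T^{n+1} \le \bbE_\T^0/\dt$ and $\kappa\,\llbracket \Pet g_\T^{n+1}, g_\T^{n+1}\rrbracket_\T \le \bbE_\T^0/\dt$. On a diamond $\Ds$ having at least one vertex where $u^{n+1}\ge \mu$, we deduce $r^\Ds(u_\T^{n+1})\ge \mu/4$, so that the $\Ds$-contribution to $\bbI_\T^{n+1}$ gives a bound of the form $|\delta^\Ds g_\T^{n+1}|^2 \le C_1(\T,\dt)/\mu$, hence explicit bounds on $|g_\k-g_\l|$ and $|g_\ke-g_\le|$ for the vertices of $\Ds$. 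Similarly, by~\eqref{prop_pen}, for every primal-dual pair $(\k,\ke)$ with $m_{\k\cap\ke}>0$ one gets $|g_\k-g_\ke|\le C_2(\T,\dt,\kappa)$.

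\textbf{Induction on the mesh graph.} Introduce the graph with vertex set $\M\cup\ov{\Mie}$ whose edges are (i) vertex pairs of a common diamond (same-mesh edges) and (ii) primal-dual pairs with non-trivial overlap (cross-mesh edges). This graph is connected with some finite diameter $N(\T)$. Starting from $P_0$, a breadth-first propagation of length at most $N(\T)$ extends the bound on $|g^{n+1}|$ to every cell: at each step, the control on $|g|$ at a parent cell $P$ with $u_P^{n+1}\ge\mu$ gives a bound on $|g|$ at the child cell, hence a new lower bound $\mu'=\exp(-|g|_{\text{child}}-\|V\|_{\infty})>0$. After at most $N(\T)$ steps we obtain a uniform bound $|g_P^{n+1}|\le M(\T,\dt,\kappa,u_0,V)$ for every $P\in\M\cup\ov{\Mie}$, whence $u_P^{n+1}\ge \exp(-M-\|V\|_\infty) =:\varepsilon$. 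The main obstacle is that the same-mesh step degrades like $1/\sqrt{\mu}$ as $\mu\to 0$, so the successive lower bounds $\mu,\mu',\ldots$ shrink very rapidly; still, the number of propagation steps is finite, so $\varepsilon$ stays strictly positive, at the price of strong sensitivity to $\T$, $\dt$ and $\kappa$.
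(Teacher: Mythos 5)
Your proof is correct and follows essentially the same strategy as the paper's: a first cell is bounded below via mass conservation, the bound is propagated through diamonds using the dissipation term $\bbI_\T^{n+1}\le \bbE_\T^0/\dt$ (with $r^\Ds\ge \mu/4$ once one vertex is controlled), and the penalization term is used to cross between the primal and dual meshes, the whole argument terminating after finitely many steps on the fixed mesh. The only cosmetic differences are that the paper phrases the propagation via $\widehat\bbI_\T^{n+1}$ and the diagonal matrices $\bbB^\Ds$ (whose entries are uniformly bounded below) rather than via $\lambda_{\min}(\bbA^\Ds)$ on each diamond, and that it propagates through the primal mesh first and then jumps once to each dual cell, whereas you organize both meshes into a single connected graph.
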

\begin{proof}
In this proof as elsewhere in the paper, the generic constants $C$ only depend on the data of the continuous 
problem and on the regularity bound $\theta^\star$ for the mesh. In order to highlight the dependency of 
a quantity with respect to the mesh or to the time step, we use subscripts. For instance $C_\dt$ may depend on $\dt$, whereas 
$C_\T$ may depend on the mesh and 
$C_{\T,\dt}$ may depend on the mesh and on the time step.

Owing to~\eqref{eq:mass}, there exists $M_0 \in \M \cap \ov{\M^*}$ such that 
$$
u_{M_0}^{n+1}  \ge \frac1{{\rm m}_\O} \int_\O u_0\d\x >0, 
$$
implying in particular that 
\be\label{eq:pos1}
\log(u_{M_0}^{n+1}) \ge - C \quad  \text{and} \quad r^\Ds(\ut^{n+1}) \ge  \frac1{4{\rm m}_\O} \int_\O u_0\d\x >0 \quad\text{for all}\quad \Ds \in \DD_{M_0}.
\ee
On the other hand, it follows from \eqref{dissipation_control} that
$\bbI_\T^{n+1} \leq C_\dt$.
Together with Estimate~\eqref{eq:Fisher-0}, this provides that 
\be\label{eq:pos2}
\widehat \bbI_\T^{n+1} \leq C_\dt.
\ee
Assume for instance that $M_0=K_0 \in \M$ (the case $M_0 \in \ov{\M^*}$ is similar). Since $B^\Ds_\sig > \frac1C$ for all $\Ds \in \DD$, 
we deduce from \eqref{eq:pos1}--\eqref{eq:pos2} that $\log(u_{K_1}^{n+1}) \ge - C_\dt$ for all neighboring cell ${K_1} \in \M$ such that $K_0|K_1 \in \Ee$ and for all $L \in \p\M\cap \p K_0$. It follows from a simple induction based on the reproduction of this argument (see \cite[Lemma 3.10]{CG16_MCOM} or \cite[Lemma 3.7]{CG2016} for details) that 
\be\label{eq:pos-primal}
\log\left(u_K^{n+1}\right) \geq -C_{\dt, \T}, \qquad \forall K \in \ov \M.
\ee

Let $K^* \in\ov { \M^*}$, then there exists $K \in \M$ such that m$(\k \cap \ke) >0$. 
Thanks to the penalization term in Estimate~\eqref{eq:EI_D2},
one has that
$$
\left(g_{K^*}^{n+1} - g_K^{n+1}\right)^2 \leq C_{\T,\dt}.
$$
The regularity of $V$ implies that
$$
\left(\log\left(u_{K^*}^{n+1}\right) - \log\left(u_K^{n+1}\right)\right)^2 \leq C_{\T,\dt},
$$
hence, using  \eqref{eq:pos-primal}, one gets that
\be\label{eq:pos-dual}
\log\left(u_{K^*}^{n+1}\right) \geq -C_{\dt, \T}, \qquad \forall \ke \in \ov {\M^*}.
\ee
The relation~\eqref{eq:pos} follows from \eqref{eq:pos-primal} and \eqref{eq:pos-dual}.
\end{proof}

\subsection{Existence of a solution to the scheme}

The numerical scheme~\eqref{scheme} amounts at each time step $n \ge 0$ to solve 
a nonlinear system $\boldsymbol{\mathcal F}^n(\ut^{n+1}) = \0$. The existence of a solution $\ut^{n+1}$ 
is therefore non trivial. It is established in the following proposition. 
\begin{prop}\label{prop:existence}
For all $n \ge 0$, there exists (at least) one solution $u_\T^{n+1} \in (\R_+^\ast)^\T$ to the nonlinear system~\eqref{scheme}.
\end{prop}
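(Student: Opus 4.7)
The plan is to reformulate \eqref{scheme} as the zero-finding problem for a continuous map on a finite-dimensional linear space and then invoke a topological-degree (or Leray--Schauder) argument. The difficulty is that the natural map $\boldsymbol{\mathcal F}^n$ associated with the scheme is only defined on $(\R_+^\ast)^\T$ and blows up on $\p (\R_+^\ast)^\T$ because of the $\log$. I would bypass this by taking $g_\T = \log u_\T + V_\T \in \R^\T$ as the primary unknown, since $u_\T = \exp(g_\T - V_\T)$ is then automatically in $(\R_+^\ast)^\T$. Plugging this substitution into \eqref{scheme} rewrites the scheme as $\widetilde{\boldsymbol{\mathcal F}}^n(g_\T) = 0$ for a continuous map $\widetilde{\boldsymbol{\mathcal F}}^n : \R^\T \to \R^\T$.

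Next, I would introduce a continuous homotopy $H : [0,1] \times \R^\T \to \R^\T$ with $H(1,\cdot) = \widetilde{\boldsymbol{\mathcal F}}^n$ and $H(0,\cdot)$ an affine invertible map whose unique zero is $g_\T = \log u_\T^n + V_\T$ (i.e., $u_\T = u_\T^n$). A natural choice is to multiply the flux term $T_\DD$ and the penalization $\kappa\llbracket \Pet g_\T,\cdot\rrbracket_\T$ by $\lambda$, so that at $\lambda = 0$ the scheme degenerates to the identity $u_\T = u_\T^n$ mesh-by-mesh. The critical observation is that the arguments of Proposition~\ref{prop:base} and of Lemmas~\ref{lem:mass-ud}--\ref{lem:pos} are robust with respect to this scaling: for every $\lambda \in [0,1]$ and every zero $g_{\T,\lambda}$ of $H(\lambda,\cdot)$, the associated $u_{\T,\lambda} = \exp(g_{\T,\lambda} - V_\T)$ still conserves mass, still satisfies a discrete entropy/dissipation inequality (with $\lambda$ in front of the dissipation terms), and therefore still enjoys the uniform-in-$\lambda$ lower bound $u_{M,\lambda} \ge \eps > 0$ produced by Lemma~\ref{lem:pos} and a matching upper bound from $|u_\T|_{1,\T} = \int_\O u_0\,\d\x$ combined with the $L^\infty$-to-$L^1$ comparison on each cell.

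These two-sided bounds give $\|g_{\T,\lambda}\|_\infty \leq R$ uniformly in $\lambda \in [0,1]$, so all zeros of $H(\lambda,\cdot)$ lie in the open ball $B_R \subset \R^\T$. By the homotopy invariance of the Brouwer degree, $\deg(\widetilde{\boldsymbol{\mathcal F}}^n, B_R, 0) = \deg(H(0,\cdot), B_R, 0) \neq 0$ (the latter being the degree of an invertible linear map whose zero lies in $B_R$), which yields a zero $g_\T^{n+1} \in B_R$ of $\widetilde{\boldsymbol{\mathcal F}}^n$, hence a positive solution $u_\T^{n+1} = \exp(g_\T^{n+1} - V_\T) \in (\R_+^\ast)^\T$ of \eqref{scheme}.

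The main obstacle is making sure that the $\lambda$-dependent homotopy is chosen so that \emph{both} the entropy/dissipation estimate and the chain of arguments used in Lemma~\ref{lem:pos} (mean-value cell, neighbor-by-neighbor propagation, primal-to-dual transfer via the penalization) go through with bounds independent of $\lambda$. Scaling only the diffusion and penalization by $\lambda$ is delicate because at $\lambda = 0$ the penalization vanishes and the primal/dual components decouple; one must therefore keep a $\lambda$-independent coercive piece (for instance by adding a vanishing regularization $\epsilon(g_\T - g_\T^n)$ that one removes in the end, or by choosing $H(0,\cdot) = g_\T \mapsto g_\T - (\log u_\T^n + V_\T)$ as the whole $\lambda = 0$ map rather than a partial one) so that the uniform bounds remain available throughout the homotopy.
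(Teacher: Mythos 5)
Your strategy---a topological degree / Leray--Schauder argument whose only real content is the a priori positivity of Lemma~\ref{lem:pos}, which confines every candidate solution to a compact subset of $(\R_+^\ast)^\T$ on which the nonlinear map is continuous---is exactly the paper's proof, which is itself only sketched and deferred to \cite[Proposition 3.8]{CG2016}. The one place where your write-up overstates things is the claim, in the middle paragraph, that the lower bound $u_{M,\lambda}\ge\eps>0$ from Lemma~\ref{lem:pos} is \emph{uniform in $\lambda$} for the homotopy that multiplies $T_\DD$ and the penalization by $\lambda$: testing the $\lambda$-scheme with $g_{\T,\lambda}$ only yields $\lambda\,\dt\,\bigl(\bbI_{\T,\lambda}+\kappa\llbracket\Pet g_{\T,\lambda},g_{\T,\lambda}\rrbracket_\T\bigr)\le \bbE^n_\T-\bbE_{\T,\lambda}\le \bbE^n_\T$, so the dissipation bound---which is precisely the quantity that Lemma~\ref{lem:pos} propagates from the mean-value cell to its neighbours and then from the primal to the dual mesh---degrades like $1/\lambda$, and the resulting $\eps_\lambda$ collapses as $\lambda\to 0$. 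You correctly identify this as the main obstacle in your final paragraph, and the remedy you sketch there (keeping a $\lambda$-independent coercive piece so that the entropy/dissipation and penalization estimates hold uniformly over the whole interval $[0,1]$, since degree theory requires that no zero of $H(\lambda,\cdot)$ touch $\partial B_R$ for \emph{any} $\lambda$) is indeed what is needed; with that repair the argument is complete and coincides with the one the paper cites.
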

The proof of Proposition \ref{prop:existence} relies on a topological degree argument~\cite{LS34,Dei85,EGGH98}. 
The key point is that, owing to Lemma~\ref{lem:pos}, one can restrain our search for the solution on a compact subset of 
$\left(\R_+^*\right)^\T$ on which the functional $ \boldsymbol{\mathcal F}^n$ is (uniformly) continuous, making 
Leray-Schauder's theorem applicable. 
We do not detail the proof here since it is very close to the one of~\cite[Proposition 3.8]{CG2016}.

\section{Convergence w.r.t. discretization parameters}\label{sec:convergence}

\subsection{Compactness of the approximate solutions}\label{ssec:compact}

Thanks to Proposition~\ref{prop:existence}, we have at hand discrete solutions $\left(\ut^{n+1}\right)_{n\ge 0}$ corresponding 
to all the time steps, and thus the corresponding reconstructions 
$u_{h,\Delta t} \in \Htdt$, $u_{h,\DD,\Delta t}$ as defined in Section~\ref{ssec:functspaces}. We also define $\xi_{h,\Delta t} \in \Htdt$ based on $\xi_\T^n = \sqrt{\ut^n}$ for all $n \in \{0,\dots, N_T\}$.

Thanks to the estimates established in Section~\ref{ssec:estimates2}, we can obtain 
some further estimates satisfied by the discrete reconstructions. These estimates, stated in Lemma \ref{lem:estsolapp}, will then be used 
to deduce some compactness properties of sequences of  approximate solutions.  

\begin{lem}\label{lem:estsolapp}
\begin{enumerate}[(i)]
\item There exists $C$ depending only on $u_0$ and $V$ such that 
\be\label{est:equi-int}
\int_\O H({u_{h,\Delta t}}(\x,t)) \d\x \leq \frac12\left( \int_\O H({u_{h,\Delta t, \M}}(\x,t)) \d\x  + \int_\O H({u_{h,\Delta t, \M^*}}(\x,t)) \d\x \right)\leq  C, \ \forall t \ge 0.
\ee
\item There exists $C$ depending only on $u_0$, $V$, $\lambda_m$, $\lambda^M$ and $\theta^\star$ such that, 
for ${\rm size}(\T)$ small enough, one has
\be
\label{eq:Fisher-sum}
 \sum_{n = 1}^{N_T} \dt\,\widehat \bbI^n_\T \leq C(1+T) \mbox{ and } \iint_{Q_T} |\grad^h \xi_{h,\dt}|^2 \d\x \d t\leq C(1+T).
\ee 
\item There exists $C$ depending only on $\theta^\star$, $u_0$, $V$, $\cter{CFL}$,
$\lambda_m$ and $\lambda^M$ such that
\be\label{est:u_D_L1}
\int_\O u_{h,\DD}^n \d\x \leq C(1+T), \qquad \forall n \in \{1,\dots, N_T\}.
\ee

\end{enumerate}
\end{lem}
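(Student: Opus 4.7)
The plan is to derive each of the three estimates essentially as a corollary of the fundamental estimates from Proposition~\ref{prop:base} combined with the technical lemmas from Section~\ref{ssec:estimates2}. I would treat (i)--(iii) one after the other, the last one being the only real difficulty because one needs a pointwise-in-time control on $\sum_\Ds \md r^\Ds(\ut^n)$, whereas Lemmas~\ref{lem:mass-ud} and \ref{lem:Fisher-0} only provide a time-integrated estimate. This is where the CFL-type assumption~\eqref{eq:CFL} will be crucially invoked.

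For (i), the key observation is that the convexity of $H$, together with the definition $u_h = \frac12(u_{h,\M} + u_{h,\M^*})$, gives $H(u_h) \le \frac12 H(u_{h,\M}) + \frac12 H(u_{h,\M^*})$ pointwise; integrating over $\O$ yields the first inequality. For the second, I would rewrite
\[
\frac12\int_\O H(u_{h,\dt,\M})\d\x + \frac12\int_\O H(u_{h,\dt,\M^\ast})\d\x = \llbracket H(\ut^n), 1_\T\rrbracket_\T
\]
and note that, since $V\ge 0$ and $\ut^n\ge 0$, this quantity is bounded above by $\bbE_\T^n$. Invoking the monotonicity and bound~\eqref{en_decay} on the discrete free energy concludes.

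For (ii), Lemma~\ref{lem:Fisher-0} gives $\widehat\bbI_\T^{n+1}\le C(1+\bbI_\T^{n+1})$ for $\size(\T)\le h^\star$, so summing and using the dissipation bound~\eqref{dissipation_control} produces
\[
\sum_{n=1}^{N_T}\dt\,\widehat\bbI_\T^n \le C\bigl(T + \textstyle\sum_n \dt\,\bbI_\T^n\bigr) \le C(1+T).
\]
For the gradient of $\xi_{h,\dt}$, I would rewrite the space-time integral as $\sum_n \dt\,\|\grad^h \xi_h^n\|_2^2$, then use~\eqref{eq:normegradsqrtut} to replace $\|\grad^h \xi_h^n\|_2$ by $\|\gradDD\xi_\T^n\|_{\bLambda,\DD}$, apply Lemma~\ref{lem:Fisher} to bound this by $\widehat\bbI_\T^n$, and conclude by the previous estimate.

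For (iii), the main obstacle is that the right-hand side of~\eqref{eq:mass-ud} involves the single-time quantity $\widehat\bbI_\T^{n+1}$, which is not controlled uniformly in $n$. Here I would use~\eqref{eq:CFL}, which gives $\size(\T_m) \le \dt_m/\cter{CFL}$, so that
\[
\size(\T)\,\widehat\bbI_\T^{n} \;\le\; \frac{1}{\cter{CFL}}\,\dt\,\widehat\bbI_\T^{n} \;\le\; \frac{1}{\cter{CFL}}\sum_{k=1}^{N_T}\dt\,\widehat\bbI_\T^k \;\le\; \frac{C(1+T)}{\cter{CFL}}.
\]
Combined with Lemma~\ref{lem:mass-ud} and the fact that $\int_\O u_{h,\DD}^n\d\x = \sum_\Ds \md\, r^\Ds(\ut^n)$, this yields~\eqref{est:u_D_L1}. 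The CFL assumption is therefore used precisely to trade the single-step Fisher-information bound for a time-averaged one, absorbing the loss of the factor $\size(\T)$.
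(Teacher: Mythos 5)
Your proposal is correct and follows the paper's proof essentially step for step: Jensen's inequality plus the energy decay \eqref{en_decay} for (i), the combination of \eqref{dissipation_control}, Lemma~\ref{lem:Fisher-0} and Lemma~\ref{lem:Fisher} (via \eqref{eq:normegradsqrtut}) for (ii), and Lemma~\ref{lem:mass-ud} together with the CFL hypothesis \eqref{eq:CFL} to absorb the factor $\size(\T)\,\widehat\bbI_\T^n$ for (iii). Your reading of where the CFL condition enters --- trading the single-step Fisher bound for the time-integrated one --- is exactly the role it plays in the paper.
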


\begin{proof}
The first inequality in \eqref{est:equi-int} is just a consequence of Jensen's inequality because $H$ is a convex function. 
Moreover, since we assumed that $V\geq 0$ and proved the positivity of the discrete solution, we have: 
\begin{multline*}
\int_\O H({u_{h,\Delta t}}(\x,t))\leq \frac12\left( \int_\O H({u_{h,\Delta t, \M}}(\x,t)) \d\x  + \int_\O H({u_{h,\Delta t, \M^*}}(\x,t)) \d\x \right) \\
\leq  \bbE^{n+1}_\T, \ \forall t \in ( t^n, t^{n+1}],
\end{multline*}
The last inequality in  \eqref{est:equi-int} is then a straightforward consequence of \eqref{en_decay}. The estimates in \eqref{eq:Fisher-sum} are deduced from \eqref{dissipation_control}, \eqref{eq:Fisher-0} and Lemma \ref{lem:Fisher}.

It remains to prove \eqref{est:u_D_L1}. From Lemma \ref{lem:mass-ud}, we have that 
$$
\int_\O u_{h,\DD}^n \d\x \leq C \left(1 + \size(\T) \,\widehat \bbI_\T^n \right), \qquad \forall n \ge 1.
$$
We infer from the first inequality of~\eqref{eq:Fisher-sum} that 
$
 \widehat \bbI_\T^n \leq {C(1+T)}/{\dt}
$
and the assumption~\eqref{eq:CFL} implies that 
$$
\size(\T) \,\widehat \bbI_\T^n \leq C(1+T), \qquad \forall n \ge 1.
$$
This concludes the proof of Lemma \ref{lem:estsolapp}.

\end{proof}

In order to get the compactness of a sequence of approximate solutions, it is also crucial to establish a discrete counterpart of a $L^1(0,T;W^{-1,1}(\O))$ estimate on the discrete time derivative. In what follows, we denote by 
$$\p_{t,\T} u_{h,\dt}^n = \left( \left(\frac{u_K^{n+1}-u_K^{n}}{\dt}\right)_{K\in\ov \M},  \left(\frac{u_{K^*}^{n+1}-u_{K^*}^{n}}{\dt}\right)_{K^*\in\ov{\M^*}}\right) \in \R^\T, 
\qquad \forall n \ge 0.$$
\begin{lem}\label{lem:u_t}
There exists $C$ depending only on $V$, $u_0$, $T$, $\kappa$, $\theta^\star$, $\cter{CFL}$, $\lambda_m$ and $\lambda^M$ such that 
$$
\sum_{n=0}^{N_T-1}\dt \left\|\p_{t,\T} u_{h,\dt}^n \right\|_{-1,1,\T}  \leq C.
$$
\end{lem}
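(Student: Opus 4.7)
The plan is to bound the dual norm by duality: for any test $\psi_\T\in\Rt$ associated with $\psi_h\in\Ht$ satisfying $\|\psi_h\|_{1,\infty\star,\T}\le 1$, I will control $\llbracket \p_{t,\T}u_{h,\dt}^n,\psi_\T\rrbracket_\T$ by quantities whose time-integrated square is already bounded by Proposition~\ref{prop:base} and Lemma~\ref{lem:estsolapp}. Plugging $\psi_\T$ in the variational formulation \eqref{sch_formcompacte} yields
\[
\llbracket \p_{t,\T}u_{h,\dt}^n,\psi_\T\rrbracket_\T \;=\; -\,T_\DD(\ut^{n+1};g_\T^{n+1},\psi_\T)\;-\;\kappa\,\llbracket \Pet g_\T^{n+1},\psi_\T\rrbracket_\T,
\]
so the task reduces to bounding these two terms separately.

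For the diffusion term I apply Cauchy--Schwarz in the bilinear form associated with the matrices $\bbA^\Ds$, which is positive semi-definite thanks to~\eqref{eq:elliptic}. Using the identity $\delta^\Ds\psi_\T\cdot\bbA^\Ds\delta^\Ds\psi_\T=\md\,\bLambda^\Ds\gradD\psi_\T\cdot\gradD\psi_\T$ and the uniform bound $\|\nabla^h\psi_h\|_\infty\le\|\psi_h\|_{1,\infty,\T}\le 1$, this yields
\[
|T_\DD(\ut^{n+1};g_\T^{n+1},\psi_\T)|\le \sqrt{\bbI_\T^{n+1}}\,\Bigl(\lambda^M\sum_{\Ds\in\DD}\md\,r^\Ds(\ut^{n+1})\Bigr)^{1/2}.
\]
Lemma~\ref{lem:estsolapp}(iii) (which relies on the CFL-like hypothesis~\eqref{eq:CFL}) bounds the last factor by $C\sqrt{1+T}$, so that $|T_\DD(\ut^{n+1};g_\T^{n+1},\psi_\T)|\le C\sqrt{1+T}\sqrt{\bbI_\T^{n+1}}$.

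For the penalization term I use that the symmetric bilinear form $(\ut,\vt)\mapsto\llbracket\Pet\ut,\vt\rrbracket_\T$ is positive semi-definite (cf.~\eqref{prop_pen}), so Cauchy--Schwarz gives
\[
|\llbracket \Pet g_\T^{n+1},\psi_\T\rrbracket_\T|\le \llbracket \Pet g_\T^{n+1},g_\T^{n+1}\rrbracket_\T^{1/2}\,\llbracket \Pet \psi_\T,\psi_\T\rrbracket_\T^{1/2}\le \llbracket \Pet g_\T^{n+1},g_\T^{n+1}\rrbracket_\T^{1/2},
\]
the last inequality coming from $\llbracket \Pet \psi_\T,\psi_\T\rrbracket_\T^{1/2}\le\|\psi_h\|_{1,\infty\star,\T}\le 1$. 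Taking the supremum over $\psi_\T$ I conclude
\[
\|\p_{t,\T}u_{h,\dt}^n\|_{-1,1,\T}\;\le\; C\sqrt{1+T}\,\sqrt{\bbI_\T^{n+1}}\;+\;\kappa\,\llbracket \Pet g_\T^{n+1},g_\T^{n+1}\rrbracket_\T^{1/2}.
\]

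To finish, I multiply by $\dt$, sum over $n\in\{0,\dots,N_T-1\}$, and apply the discrete Cauchy--Schwarz inequality in time:
\[
\sum_{n=0}^{N_T-1}\dt\,\sqrt{\bbI_\T^{n+1}}\le\sqrt{T}\Bigl(\sum_{n=0}^{N_T-1}\dt\,\bbI_\T^{n+1}\Bigr)^{1/2},
\]
and similarly for the penalization contribution. Both time-integrated quantities are uniformly bounded by~\eqref{dissipation_control}, yielding the desired constant $C$. The only nontrivial obstacle is to ensure the $L^1$-bound on $r^\DD[\ut^{n+1}]$ with a constant independent of $n$, which is precisely the role played by Lemma~\ref{lem:estsolapp}(iii) combined with the CFL hypothesis~\eqref{eq:CFL}; everything else is a clean application of duality and Cauchy--Schwarz.
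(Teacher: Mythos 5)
Your proof is correct and follows essentially the same route as the paper's: express the discrete time derivative via the scheme, bound the two right-hand-side terms by Cauchy--Schwarz against the dissipation $\bbI_\T^{n+1}$ and the penalization energy, control the weight $\sum_\Ds \md\, r^\Ds(\ut^{n+1})$ through the $L^1$ bound of Lemma~\ref{lem:estsolapp}(iii) (hence the CFL hypothesis), and conclude with Cauchy--Schwarz in time and \eqref{dissipation_control}. The only cosmetic difference is that the paper applies Cauchy--Schwarz once to the combined scalar product $T_\DD(\ut^{n+1};\cdot,\cdot)+\kappa\llbracket\Pet\cdot,\cdot\rrbracket_\T$ whereas you treat the two terms separately, which yields the same estimate up to a harmless constant.
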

\begin{proof}
We proceed as in \cite[Lemma 3.4]{CKM15}. It follows from \eqref{sch_formcompacte} that for all $\psi_\T \in \R^\T$, one has 
$$
\llbracket \p_{t,\T} u_{h,\dt}^n, \psi_\T \rrbracket_\T = -  T_\DD(u_\T^{n+1}, g_\T^{n+1}, \psi_\T) 
- \kappa \llbracket \mathcal P(g_\T^{n+1}), \psi_\T\rrbracket_\T, \qquad \forall n \ge 0.
$$
The application $(g_\T^{n+1}, \psi_\T) \mapsto T_\DD(u_\T^{n+1}, g_\T^{n+1}, \psi_\T) + 
\kappa\llbracket \mathcal P(g_\T^{n+1}), \psi_\T\rrbracket_\T$ is a scalar 
product on $\R^\T$, then it follows from Cauchy-Schwarz inequality that 
$$
\llbracket \p_{t,\T} u_{h,\dt}^n, \psi_\T \rrbracket_\T \leq 
\left( \bbI_\T^{n+1} +  \kappa\llbracket \mathcal P(g_\T^{n+1}), g_\T^{n+1}\rrbracket_\T \right)^{1/2}
\left( T_\DD(u_\T^{n+1}, \psi_\T, \psi_\T) +  \kappa\llbracket \mathcal P(\psi_\T), \psi_\T\rrbracket_\T \right)^{1/2} .
$$
We can estimate the term 
$T_\DD(u_\T^{n+1}, \psi_\T, \psi_\T)$ by 
$$
 T_\DD(u_\T^{n+1}, \psi_\T, \psi_\T) = \int_\O u_{h,\DD}^{n+1}Â \Lambda_{h,\DD} \grad^h \psi_h \cdot   \grad^h \psi_h \d\x
 \leq \lambda^M \left\|u_{h,\DD}^{n+1}\right\|_{L^1(\O)} {\|\grad^h \psi_h\|}^2_\infty.
$$
Thanks to \eqref{est:u_D_L1} and since $u_{h,\DD}^{n+1} \ge 0$, one gets that 
$$
 T_\DD(u_\T^{n+1}, \psi_\T, \psi_\T) \leq  C\; {\|\grad^h \psi_h\|}^2_\infty, \qquad \forall n \ge0, 
$$
therefore 
$$
\left( T_\DD(u_\T^{n+1}, \psi_\T, \psi_\T) + \kappa\llbracket \mathcal P(\psi_\T), \psi_\T\rrbracket_\T \right)^{1/2}  
\leq C  \|\psi_h\|_{1,\infty\star,\T}, \qquad \forall \psi_\T \in \R^\T.
$$
Since $\psi_\T$ can be chosen arbitrarily, this implies that 
$$
 \left\|\p_{t,\T} u_{h,\dt}^n \right\|_{-1,1,\T} \leq C  \left( \bbI_\T^{n+1} +  \kappa\llbracket \mathcal P(g_\T^{n+1}), g_\T^{n+1}\rrbracket_\T \right)^{1/2}, \qquad \forall n \in \{0,\dots, N_T-1\}.
$$
Thanks to Cauchy-Schwarz inequality once again, we obtain that 
\begin{align*}
\sum_{n=0}^{N_T-1}Â \dt \left\|\p_{t,\T} u_{h,\dt}^n \right\|_{-1,1,\T} 
\leq\; &\sqrt T \left(\sum_{n=0}^{N_T-1}Â \dt \left\|\p_{t,\T} u_{h,\dt}^n \right\|_{-1,1,\T}^2\right)^{1/2} \\
\leq \;&C  \left(\sum_{n=1}^{N_T}Â \dt \left(\bbI_\T^n + \kappa\llbracket \mathcal P(g_\T^{n}), g_\T^{n}\rrbracket_\T\right) \right)^{1/2}.
\end{align*}
One concludes the proof by using \eqref{dissipation_control}.
\end{proof}

Let $\left(\T_m\right)_{m\ge 1}$ be a sequence of meshes as in Section \ref{sec-meshes} such that ${\rm size}(\T_m)$ tends to 
$0$ as $m$ tends to $\infty$, and such that the regularity of the discretization $\T_m$ is uniformly bounded w.r.t. $m$, i.e., 
$$
1 \leq \theta_\Ds, \widetilde \theta_\Ds \leq \theta^\star, \qquad \forall \Ds \in \DD_m, \; \forall m \ge 1.
$$
Assume moreover that simultaneously the time step $\dt_m$ tends to $0$ as $m$ tends to $\infty$ while satisfying~\eqref{eq:CFL}.
The estimates stated in this section are uniform w.r.t. $m$. 
We deduce from Lemma~\ref{lem:estsolapp} that the sequences $\left(u_{h_m, \dt_m}\right)_{m\ge 1}$, 
$\left(u_{h_m, \dt_m,\M}\right)_{m\ge 1}$, and $\left(u_{h_m, \dt_m,\M^*}\right)_{m\ge 1}$ are  equi-integrable
in $L^1(Q_T)$ while uniformly bounded in $L^\infty(0,T;L^1(\O))$, 
hence there exists $u, u^{(1)}, u^{(2)} \in L^\infty(0,T;L^1(\O))$ such that, for all $p \in [1,\infty)$, 
the following convergence holds up to the extraction of an unlabeled subsequence:
\begin{subequations}\label{eq:convL1-weak}
\begin{align}
&u_{h_m, \dt_m} \underset{m\to\infty}\longrightarrow u \quad \text{weakly in}\; L^p(0,T;L^1(\O)), \label{eq:convL1-weak-h}\\
& u_{h_m, \dt_m, \M} \underset{m\to\infty}\longrightarrow u^{(1)} \quad \text{weakly in}\; L^p(0,T;L^1(\O)).\label{eq:convL1-weak-hM}\\
& u_{h_m, \dt_m, \M^*} \underset{m\to\infty}\longrightarrow u^{(2)} \quad \text{weakly in}\; L^p(0,T;L^1(\O)).\label{eq:convL1-weak-hM*}
\end{align}
\end{subequations}

Moreover, it follows from \eqref{eq:mass} and \eqref{eq:Fisher-sum} that 
\be\label{eq:L2H1-discrete}
\|\xi_{h,\Delta t}\|_{2;1,2,\T}\leq C.
\ee 
Thus, similarly to \cite[Proposition 5.3]{ABK2010} (which is strongly related to \cite[Lemma 3.8]{ABH_2007}), we get the existence of  $\xi \in L^2(0,T;H^1(\O))$ such that 
\be\label{eq:conv-xi-weak}
\xi_{h_m, \dt_m} \underset{m\to\infty}\longrightarrow \xi \quad \text{weakly in}\; L^2(Q_T), 
\qquad 
\grad^h \xi_{h_m, \dt_m} \underset{m\to\infty}\longrightarrow \grad \xi\quad \text{weakly in}\; L^2(Q_T)^2. 
\ee

Up to now, we only have weak convergence results, which are  not sufficient to pass to the limit in the scheme because of the nonlinearities. The purpose of the following proposition is to recover some strong convergence results. Our approach is based on the time-compactness toolbox presented in~\cite{ACM15_preprint} (see also~\cite{Gallouet_FVCA8} for a closely related approach).

\begin{prop}\label{prop:strong-conv} Up to the extraction of an unlabeled subsequence, 
\begin{subequations}\label{eq:convL1-strong}
\begin{align}
&u_{h_m, \dt_m}  \underset{m\to\infty}\longrightarrow u \quad \text{strongly in}\; L^p(0,T;L^1(\O)), \label{eq:convL1-strong-h}\\
& u_{h_m, \dt_m, \M_m} \underset{m\to\infty}\longrightarrow u \quad \text{strongly in}\; L^p(0,T;L^1(\O)),\label{eq:convL1-strong-hM}\\
& u_{h_m, \dt_m, \M^*_m} \underset{m\to\infty}\longrightarrow u \quad \text{strongly in}\; L^p(0,T;L^1(\O)),\label{eq:convL1-strong-hM*}\\
& u_{h_m, \dt_m, \DD_m} \underset{m\to\infty}\longrightarrow u \quad \text{strongly in}\; L^1(Q_T),\label{eq:convL1-strong-hD}
\end{align}
\end{subequations}
for all $p \in [1,\infty)$.
Moreover, let $\xi \in L^2(0,T,H^1(\O))$ be as in~\eqref{eq:conv-xi-weak}, then $\xi=\sqrt{u}$.
\end{prop}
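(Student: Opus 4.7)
My strategy is to first establish strong $L^1(Q_T)$ compactness of the averaged reconstruction $u_{h_m,\dt_m}$ through a discrete Aubin--Simon-type argument in the spirit of~\cite{ACM15_preprint}, then to propagate this convergence to the primal, dual, and diamond reconstructions using the penalization, and finally to identify the limit via an almost-everywhere extraction.

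For the first step, the three ingredients required by the compactness toolbox are all at hand. Equi-integrability of $(u_{h_m,\dt_m})$ in $L^1(Q_T)$ follows from the $L^\infty(0,T;L^1(\O))$ bound on $H(u_{h_m,\dt_m})$ of Lemma~\ref{lem:estsolapp}(i) combined with the superlinearity of $H$. A space translation estimate on $u$ is obtained from the uniform discrete $L^2(0,T;H^1(\O))$ bound~\eqref{eq:L2H1-discrete} on $\xi_{h_m,\dt_m}$: the factorization $u(\cdot+\tau)-u(\cdot)=(\xi(\cdot+\tau)+\xi(\cdot))(\xi(\cdot+\tau)-\xi(\cdot))$ and Cauchy--Schwarz, together with the $L^2(Q_T)$ bound on $\xi$ stemming from mass conservation~\eqref{eq:mass}, yield smallness of space translations uniformly in $m$. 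Time translation estimates follow from the discrete $L^1(0,T;W^{-1,1})$ bound of Lemma~\ref{lem:u_t} by a duality argument against suitably smoothed test functions (the space regularity of $\xi$ entering again through the $\|\cdot\|_{1,\infty\star,\T}$ norm). The resulting relative compactness of $(u_{h_m,\dt_m})$ in $L^1(Q_T)$ must coincide with its weak limit from~\eqref{eq:convL1-weak-h}, yielding~\eqref{eq:convL1-strong-h}; equi-integrability and the $L^\infty(0,T;L^1(\O))$ bound then upgrade this to convergence in $L^p(0,T;L^1(\O))$ for every $p\in[1,\infty)$.

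Second, the penalization turns the dissipation estimate~\eqref{dissipation_control} into a quantitative control on the discrepancy between primal and dual values. By definition~\eqref{prop_pen} of $\Pet$, one has
\[
\sum_{n=1}^{N_T}\dt\sum_{K\in\M_m}\sum_{K^*\in\overline{\M_m^*}}{\rm m}_{K\cap K^*}\bigl(g_K^n-g_{K^*}^n\bigr)^2 \;\le\; \frac{2\,\size(\T_m)^\beta}{\kappa}\,\sum_{n=1}^{N_T}\dt\,\kappa\llbracket\Pet g_\T^n,g_\T^n\rrbracket_\T \;\longrightarrow\; 0,
\]
since $\beta\in(0,2)$ and $\size(\T_m)\to 0$. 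As $g=\log u+V$ with $V$ smooth so that $|V_K-V_{K^*}|\lesssim\size(\T_m)$, this forces $\log u_{h_m,\dt_m,\M_m}-\log u_{h_m,\dt_m,\overline{\M_m^*}}\to 0$ in the weighted $L^2(Q_T)$ sense above. Extracting a further subsequence along which $u_{h_m,\dt_m}\to u$ almost everywhere on $Q_T$ and using the relation $u_{h_m,\dt_m}=\tfrac12\bigl(u_{h_m,\dt_m,\M_m}+u_{h_m,\dt_m,\overline{\M_m^*}}\bigr)$ together with the above control, one deduces the a.e. convergence of both primal and dual reconstructions to $u$; Vitali's theorem, with equi-integrability obtained by Jensen's inequality applied to each reconstruction, upgrades to strong $L^p(0,T;L^1(\O))$ convergence. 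The diamond reconstruction is a convex combination of primal and dual values by~\eqref{eq:rD}, and its strong $L^1(Q_T)$ convergence follows by the same averaging mechanism together with the uniform bound~\eqref{est:u_D_L1}.

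The identification $\xi=\sqrt u$ closes the proof: along the subsequence where $u_{h_m,\dt_m}\to u$ a.e., one has $\xi_{h_m,\dt_m}=\sqrt{u_{h_m,\dt_m}}\to\sqrt u$ a.e., and the weak $L^2(Q_T)$ limit from~\eqref{eq:conv-xi-weak} forces $\xi=\sqrt u$ by uniqueness of limits. The main technical obstacle I anticipate is the space translation step: the absence of a direct $L^1$ or BV bound on $\nabla^h u$ and the double-mesh character of the DDFV gradient force a delicate bookkeeping when transferring the $L^2$ regularity of $\xi$ into integral translation estimates on $u$ that hold simultaneously with respect to the primal and dual meshes.
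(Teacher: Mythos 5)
Your plan runs the compactness machinery on the averaged reconstruction $u_{h,\dt}$ first and only afterwards propagates to the primal and dual reconstructions; this ordering hides a genuine gap. Both your space-translation estimate (via the factorization $u(\cdot+\tau)-u(\cdot)=(\xi(\cdot+\tau)+\xi(\cdot))(\xi(\cdot+\tau)-\xi(\cdot))$) and your final identification ($\xi_{h,\dt}=\sqrt{u_{h,\dt}}$) rest on the identity $u_{h,\dt}=(\xi_{h,\dt})^2$, which is \emph{false} for the averaged reconstruction: one has $\xi_{h,\dt,\M}^2=u_{h,\dt,\M}$ and $\xi_{h,\dt,\M^*}^2=u_{h,\dt,\M^*}$ because these functions take nodal values, but
$$
u_{h,\dt}-\left(\xi_{h,\dt}\right)^2=\tfrac12\left(\xi_{h,\dt,\M}^2+\xi_{h,\dt,\M^*}^2\right)-\tfrac14\left(\xi_{h,\dt,\M}+\xi_{h,\dt,\M^*}\right)^2=\tfrac14\left(\xi_{h,\dt,\M}-\xi_{h,\dt,\M^*}\right)^2,
$$
which does not vanish and is not controlled at the stage where you need it (the penalization only controls $\log u_{\M}-\log u_{\M^*}$, and turning that into a quantitative bound on $\xi_{\M}-\xi_{\M^*}$ requires further work). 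This is exactly the pitfall the paper flags when verifying assumption $\bf(A_x2)$ of the toolbox: the nodal-value property holds for $v_{h,\M}$ and $v_{h,\Mie}$ but ``is not the case of the reconstruction $v_h$.'' The correct route (and the paper's) is to apply the nonlinear compactness result of \cite{ACM15_preprint} to $u_{h,\dt,\M}$ and $u_{h,\dt,\M^*}$ \emph{separately} (each being the square of a spatially compact $\xi$-reconstruction, with the $L^{3/2}$/$L^3$ duality supplied by Poincar\'e--Sobolev and Riesz--Thorin), obtain a.e.\ limits $u^{(1)}$, $u^{(2)}$ with $\xi^{(i)}=\sqrt{u^{(i)}}$, and only then invoke the penalization to conclude $u^{(1)}=u^{(2)}$.

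A secondary, lesser gap concerns the diamond reconstruction: $r^\Ds(\ut)$ averages values from the two primal cells $K,L$ and two dual cells $K^*,L^*$ adjacent to $\Ds$, so $u_{h,\dt,\DD}-u_{h,\dt}$ is not an ``averaging mechanism'' between the primal and dual functions at the same point; it involves the jumps $|u_K-u_L|$ and $|u_{K^*}-u_{L^*}|$ across each diamond. Controlling these requires the factorization $|a-b|=(\sqrt a+\sqrt b)|\sqrt a-\sqrt b|$, Cauchy--Schwarz against $\sum_\Ds \md\, r^\Ds(u_\T^n)$ (bounded by \eqref{est:u_D_L1}), and the discrete Fisher-information bound \eqref{eq:Fisher-sum}, yielding $\|u_{h,\dt,\DD}-u_{h,\dt}\|_{L^1(Q_T)}\leq C\,\size(\T)$; this estimate does not follow from convexity or the $L^1$ bound alone.
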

\begin{proof}
The proof is divided into three steps.
We remove the subscripts $m$ for the ease of reading.
\smallskip

\noindent {\em Step 1.} The goal of this part is to make use on both $\left(u_{h,\dt,\M}\right)$ and $\left(u_{h,\dt,\M^*}\right)$ 
of the time-compactness criterion of~\cite[Theorem 3.9]{ACM15_preprint}. 

It follows directly from their definitions that 
$$
u_{h,\dt,\M} = \left(\xi_{h,\dt,\M}\right)^2, \qquad u_{h,\dt,\M^*} = \left(\xi_{h,\dt,\M^*}\right)^2.
$$
Thanks to discrete Poincar\'e-Sobolev Inequality~\cite{BCCHF15}, it follows from~\eqref{eq:L2H1-discrete} that 
 $$
\| \xi_{h,\dt,\M} \|_{L^2(0,T;L^p(\O))} \leq C_p, \qquad \| \xi_{h,\dt,\M^*} \|_{L^2(0,T;L^p(\O))} \leq C_p, \qquad \forall p\in [1,+\infty),
 $$
 for some $C_p$ depending only on $p$, on $\O$ and on the regularity $\theta^\star$ of the mesh and  therefore
  \be\label{eq:uL1Lp}
\| u_{h,\dt,\M} \|_{L^1(0,T;L^p(\O))} \leq C_p, \qquad \| u_{h,\dt,\M^*} \|_{L^1(0,T;L^p(\O))} \leq C_p, \qquad \forall p \in [1,\infty).
 \ee
On the other hand, the mass conservation \eqref{eq:mass} and the positivity of the solutions yield 
 \be\label{eq:uLinfL1}
\| u_{h,\dt,\M} \|_{L^\infty(0,T;L^1(\O))} \leq C, \qquad \| u_{h,\dt,\M^*} \|_{L^\infty(0,T;L^1(\O))} \leq C. 
\ee
It results from~\eqref{eq:uL1Lp}, \eqref{eq:uLinfL1} and from Riesz-Thorin interpolation theorem that 
 \be\label{eq:uLpLp}
\| u_{h,\dt,\M} \|_{L^p(Q_T)} \leq C_p, \qquad \| u_{h,\dt,\M^*} \|_{L^p(Q_T)} \leq C_p, \qquad 
\forall p \in [1,2), 
\ee
thus in particular for $p=3/2$.  The weak limits $u^{(1)}$, $u^{(2)}$ of $u_{h,\dt,\M}$ and $u_{h,\dt,\M^*}$ thus belong to $L^{3/2}(Q_T)$, 
and  the weak limits of $\xi_{h,\dt,\M}$ and $\xi_{h,\dt,\M^*}$ (up to the extraction of an unlabeled subsequence), denoted by $\xi^{(1)}$, $\xi^{(2)}$, belong to $L^{3}(Q_T)$ of . The functions $u_{h,\dt,\M}$ and $\xi_{h,\dt,\M}$, as well as $u_{h,\dt,\M^*}$ and $\xi_{h,\dt,\M^*}$, are thus in duality. 

We now want to apply \cite[Theorem 3.9]{ACM15_preprint} in order to show that $\xi^{(1)}=\sqrt{u^{(1)}}$ and $\xi^{(2)}=\sqrt{u^{(2)}}$. Therefore, we have to verify that the three assumptions {$\bf (A_x1)$}, {$\bf (A_x2)$} and {$\bf (A_x3)$} of  \cite{ACM15_preprint} are satisfied. 
\begin{enumerate}[(i)]
\item 
As a direct consequence of the arguments developed in the proofs of \cite[Lemma 3.8]{ABH_2007} or \cite[Proposition 4.3]{CKM15} and of Poincar\'e Sobolev embedding~\cite{BCCHF15}, any sequence $\left(v_{\T_m}\right)_m$ such that $\left\|v_{h}\right\|_{1,2,\T} \leq C$ 
is such that $v_{h,\M}$ and $v_{h,\Mie}$ converges strongly in $L^3(\O)$ (up to the extraction of an unlabeled subsequence). Therefore Assumption~{$\bf (A_x1)$} of~\cite{ACM15_preprint} is fulfilled.
\item The reconstructions $v_{h,\M}$ and $v_{h,\Mie}$ from $v_\T$ are piecewise constant, the functions being equal to nodal values of 
$v_\T$ a.e. in $\O$, then Assumption~{$\bf (A_x2)$} of~\cite{ACM15_preprint} is fulfilled by these reconstructions. Note here that this 
is not the case of the reconstruction $v_h$.
\item Let $\varphi \in C^\infty(\ov\O)$ and let us define $\varphi_\T$ by 
\be\label{eq:proj_phi}
\begin{cases}
\dsp \varphi_K = \frac1{\mk} \int_K \varphi \d\x & \text{for}\;  K \in \M,\\[5pt]
\dsp \varphi_L =  \frac1{\msig} \int_\sig \varphi \d\x& \text{for}\; L\equiv \sig \in \p\M,\\[5pt]
\dsp \varphi_{K^*} = \frac1{\mke} \int_{K^*} \varphi \d\x& \text{for}\; {K^*} \in \ov{\M^*}.
\end{cases}
\ee
Following the proof of \cite[Proposition 4.2]{CKM15}, there exists $C$ depending only on the regularity of the mesh $\theta^\star$ 
such that 
\be\label{eq:grad-Linf0}
\left\|\varphi_h \right\|_{1,\infty,\T} \leq C \|\grad \varphi\|_\infty, \qquad \forall \varphi \in C^\infty(\ov \O).
\ee
On the other hand, one can show that 
\be\label{eq:stab-Lip}
\left\llbracket\Pet \varphi_\T,\varphi_\T\right\rrbracket_\T \leq C \|\grad \varphi \|_\infty^2
\ee
for some $C$ depending only on the regularity $\theta^\star$ of the mesh. 
Therefore, 
\be\label{eq:grad-Linf}
\left\|\varphi_h \right\|_{1,\infty\star,\T} \leq C \|\grad \varphi\|_\infty, \qquad \forall \varphi \in C^\infty(\ov \O).
\ee
Assumption $\bf (A_x3)$ of \cite{ACM15_preprint} is thus fulfilled. 
\end{enumerate}
We can then make use of Lemma~\ref{lem:u_t} and apply~\cite[Theorem 3.9]{ACM15_preprint} 
to claim that $\xi^{(i)} = \sqrt{u^{(i)}}$ and that 
\be\label{eq:ae-conv12}
\begin{cases}
u_{h,\dt,\M} \underset{m\to \infty}\longrightarrow u^{(1)} \\
u_{h,\dt,\M^*} \underset{m\to \infty} \longrightarrow u^{(2)}
\end{cases}\quad 
\text{a.e. in $Q_T$}.
\ee
Setting $u= \left(u^{(1)}+u^{(2)}\right)/2$, we get that 
\be\label{eq:ae-conv}
u_{h,\dt}\underset{m\to \infty}\longrightarrow u.
\ee
Moreover, as the sequences $\left(u_{h,\dt}\right)_m$, 
$\left(u_{h,\dt, \M}\right)_m$, and $\left(u_{h,\dt, \M^*}\right)_m$ are uniformly equi-integrable in $L^p(0,T,L^1(\O))$ and  converge point-wise, thanks to  \eqref{eq:ae-conv12}--\eqref{eq:ae-conv}, we can apply Vitali's convergence theorem to claim that the sequences converge strongly in $L^p(0,T,L^1(\O))$.
\smallskip

\noindent {\em Step 2.} Here, the goal is to show that the sequences $\left(u_{h,\dt}\right)_{m\ge1}$, 
$\left(u_{h,\dt,\M}\right)_{m\ge1}$ and $\left(u_{h,\dt,\M^*}\right)_{m\ge1}$ share the same limit $u$, 
i.e., $u^{(1)} = u^{(2)} = u$. 
As a consequence of \eqref{dissipation_control}, one has 
$$
 \sum_{n=1}^{N_T} \dt\llbracket \mathcal P(g_\T^{n}), g_\T^{n}\rrbracket_\T \leq C, 
$$
hence
$$
\left\| \log(u_{h,\dt,\M}) + V_{h,\M} - \log(u_{h,\dt,\M^*}) - V_{h,\M^*}\right\|^2_{L^2(Q_T)} \leq C\, {\rm size(\T)}^\beta.
$$
The regularity of the exterior potential $V$ implies that 
$$
\left\|V_{h,\M} - V_{h,\M^*}\right\|^2_{L^2(Q_T)} \leq C\, {\rm size(\T)}^2, 
$$
so that one gets that 
$$
\left\| \log(u_{h,\dt,\M})  - \log(u_{h,\dt,\M^*}) \right\|^2_{L^2(Q_T)} \leq C\, {\rm size(\T)}^\beta.
$$
Up to a subsequence, this ensures that $\log(u_{h,\dt,\M})  - \log(u_{h,\dt,\M^*})$ tends to $0$ a.e. in $Q_T$.
But owing to \eqref{eq:ae-conv12}, $\log(u_{h,\dt,\M})$ tends to $\log(u^{(1)})$ while $\log(u_{h,\dt,\M^*})$ tends to $\log(u^{(2)})$.
Then $\log(u^{(1)})=\log(u^{(2)})$ a.e. in $Q_T$, thus $u^{(1)} = u^{(2)}=u$. 
\smallskip

\noindent {\em Step 3.} In order to conclude the proof of Proposition~\ref{prop:strong-conv}, 
it remains to check that \eqref{eq:convL1-strong-hD} holds. To this end, we compute $u_{h,\dt, \DD} - u_{h,\dt}$ 
on each quarter diamond to get 
$$\left\|u_{h,\dt, \DD} - u_{h,\dt}\right\|_{L^1(Q_T)} 
\leq \; \frac14 \sum_{n=1}^{N_T} \dt \sum_{\Ds \in \DD} \md \left(\left|u_K^n - u_L^n\right| + \left|u_{K^*}^n - u_{L^*}^n\right|\right).
$$
Using the identity $|a-b| = \left(\sqrt{a}+\sqrt{b}\right) \left|\sqrt{a}-\sqrt{b}\right|$ for $a,b \ge 0$ , 
one gets 
\begin{align*}
\left\|u_{h,\dt, \DD} - u_{h,\dt}\right\|_{L^1(Q_T)}
\leq & \, \frac1{4} \sum_{n=1}^{N_T} \dt \sum_{\Ds \in \DD} \md \left( \left(\xi_K^n + \xi_L^n\right)\left|\xi_K^n - \xi_L^n\right|
+ \left(\xi_{K^*}^n + \xi_{L^*}^n\right)\left|\xi_{K^*}^n - \xi_{L^*}^n\right| \right) \\
\leq & \,  \sum_{n=1}^{N_T} \dt \sum_{\Ds \in \DD} \md r^\Ds(\xi_\T^n) \left(\left|\xi_K^n - \xi_L^n\right| + \left|\xi_{K^*}^n - \xi_{L^*}^n\right| 
\right).
\end{align*}
Owing to Cauchy-Schwarz inequality, there holds 
$$
\left\|u_{h,\dt, \DD} - u_{h,\dt}\right\|_{L^1(Q_T)}  \leq  \left( \sum_{n=1}^{N_T} \dt \sum_{\Ds \in \DD} \md r^\Ds(u_\T^n) \right)^{1/2}
  \left( \sum_{n=1}^{N_T} \dt \sum_{\Ds \in \DD} \md \left|\delta^\Ds \xi_\T^n \right|^2  \right)^{1/2}.
$$
It is easy to verify that $B_{\sig}^\Ds \ge \frac1{\theta^\star}$ and $B_{\sige}^\Ds \ge\frac1{\theta^\star}$ (see \eqref{eq:bbB} for their definition). 
Hence, using \eqref{est:u_D_L1}, it provides 
$$
\left\|u_{h,\dt, \DD} - u_{h,\dt}\right\|_{L^1(Q_T)} \leq  C \; \size(\T)  \left( \sum_{n=1}^{N_T} \dt \sum_{\Ds \in \DD} \delta^\Ds \xi_\T^n 
\cdot \bbB^\Ds  \delta^\Ds \xi_\T^n \right)^{1/2}.
$$
Using~\eqref{eq:Fisher-01} together with~\eqref{eq:Fisher-sum}, we obtain that 
$$
\left\|u_{h,\dt, \DD} - u_{h,\dt}\right\|_{L^1(Q_T)} \leq  C \; \size(\T), 
$$
thus $u_{h,\dt,\DD}$ also converges towards $u$ in $L^1(Q_T)$. 
To get the convergence in $L^p(0,T,L^1(\O))$ for any $p\in [1,\infty)$, 
it only remains to write
\begin{multline*}
\left\|u_{h,\dt, \DD} - u_{h,\dt}\right\|_{L^p(0,T,L^1(\O))} 
\leq  \; \left\|u_{h,\dt, \DD} - u_{h,\dt}\right\|_{L^\infty(0,T,L^1(\O))}^{\frac{p-1}p}  \left\|u_{h,\dt, \DD} - u_{h,\dt}\right\|_{L^1(Q_T)}^{1/p} \\
\leq  \; \left(\left\|u_{h,\dt, \DD}\right\|_{L^\infty(0,T,L^1(\O))} + \left\|u_{h,\dt} \right\|_{L^\infty(0,T,L^1(\O))} \right)^{\frac{p-1}p}  
\left\|u_{h,\dt, \DD} - u_{h,\dt}\right\|_{L^1(Q_T)}^{1/p}
\end{multline*}
and to use~\eqref{eq:mass} and \eqref{est:u_D_L1}.
\end{proof}

The purpose of the following statement is the uniform in time weak-$L^1$ in space convergence of the approximate solution 
towards $u$.
\begin{prop}\label{prop:unif_L1-w}
Up to the extraction of an additional subsequence, 
\be\label{eq:unif_L1-w}
u_{h_m,\dt_m}(\cdot,t) \underset{m\to\infty}\longrightarrowÂ u(\cdot, t) \quad \text{in the $L^1(\O)$-weak sense for all $t\in [0,T]$}.
\ee
Moreover, the limit function $u$ satisfies 
$$\sup_{t\in[0,T]} \int_\O H(u) \d\x \leq C$$
for some $C$ depending only $u_0$ and $V$.
\end{prop}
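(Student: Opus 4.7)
The plan is to combine two ingredients: a uniform-in-time equi-integrability of $\{u_{h_m,\dt_m}(\cdot,t)\}$ in $L^1(\O)$, and a uniform-in-$m$ bounded variation in $t$ of the scalar pairings $t \mapsto \int_\O u_{h_m,\dt_m}(\cdot,t)\,\varphi\,\d\x$ for test functions $\varphi \in C^\infty(\ov\O)$. Equi-integrability is immediate: the decay estimate~\eqref{en_decay} together with Jensen's inequality yields
\begin{equation*}
\sup_{t\in[0,T]} \int_\O H(u_{h_m,\dt_m}(\cdot,t))\,\d\x \leq C
\end{equation*}
uniformly in $m$, and the superlinearity of $H$ at infinity makes $\{u_{h_m,\dt_m}(\cdot,t)\}_{m,t}$ uniformly equi-integrable in $L^1(\O)$ by the de la Vall\'ee--Poussin criterion. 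In particular, for each fixed $t$ the sequence is weakly relatively compact in $L^1(\O)$ thanks to the Dunford--Pettis theorem.

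For the time regularity, fix $\varphi \in C^\infty(\ov\O)$ and let $\varphi_{\T_m}$ denote its nodal projection defined in~\eqref{eq:proj_phi}. Because the primal and dual meshes both partition $\O$ while the boundary primal cells $L \in \dr\M_m$ are degenerate of vanishing two-dimensional measure, one has the exact identity
\begin{equation*}
\llbracket v_{\T_m},\varphi_{\T_m}\rrbracket_{\T_m} \;=\; \int_\O v_{h_m}\,\varphi\,\d\x, \qquad \forall v_{\T_m}\in \R^{\T_m}.
\end{equation*}
Applying this to $v_{\T_m} = \p_{t,\T_m} u_{h_m,\dt_m}^n$ and using the duality bound
\begin{equation*}
\llbracket \p_{t,\T_m} u_{h_m,\dt_m}^n,\, \varphi_{\T_m}\rrbracket_{\T_m} \leq \|\p_{t,\T_m} u_{h_m,\dt_m}^n\|_{-1,1,\T_m}\,\|\varphi_{h_m}\|_{1,\infty\star,\T_m},
\end{equation*}
combined with the stability estimate~\eqref{eq:grad-Linf} and Lemma~\ref{lem:u_t}, we obtain
\begin{equation*}
\sum_{n=0}^{N_{T,m}-1}\left|\int_\O (u_{h_m}^{n+1} - u_{h_m}^n)\,\varphi\,\d\x\right| \;\leq\; C\,\|\varphi\|_{C^1(\ov\O)}.
\end{equation*}
Together with the mass conservation~\eqref{eq:mass}, this shows that $t\mapsto \int_\O u_{h_m,\dt_m}(\cdot,t)\,\varphi\,\d\x$ is uniformly bounded on $[0,T]$ and of uniformly bounded total variation in $t$.

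Helly's selection theorem then extracts, for each such $\varphi$, a pointwise convergent subsequence on $[0,T]$. A standard diagonal extraction along a countable $C^0(\ov\O)$-dense family of $C^\infty$ test functions yields a single subsequence along which $\int_\O u_{h_m,\dt_m}(\cdot,t)\,\varphi\,\d\x$ converges for every $t\in[0,T]$ and every $\varphi \in C^0(\ov\O)$. Combining this with the uniform equi-integrability established above, Dunford--Pettis promotes this to weak $L^1(\O)$ convergence $u_{h_m,\dt_m}(\cdot,t) \rightharpoonup \tilde u(\cdot,t)$ at every $t\in[0,T]$. The strong convergence in $L^p(0,T;L^1(\O))$ provided by Proposition~\ref{prop:strong-conv} gives, along a further subsequence, convergence a.e. in $t$ towards $u$, which identifies $\tilde u(\cdot,t) = u(\cdot,t)$ for a.e. $t$; redefining $u$ on the exceptional null set to coincide with $\tilde u$ yields~\eqref{eq:unif_L1-w}.

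Finally, the bound $\sup_{t\in[0,T]}\int_\O H(u(\cdot,t))\,\d\x \leq C$ follows by lower semi-continuity of the convex nonnegative functional $v \mapsto \int_\O H(v)\,\d\x$ under weak $L^1$ convergence, applied at each $t$ to the pointwise-in-$t$ weak limit just established, combined with the uniform $L\log L$ bound deduced from~\eqref{en_decay}. The most delicate point of the argument is producing a \emph{single} subsequence valid simultaneously for every $t\in[0,T]$; this is precisely what the BV-in-time control combined with Helly's theorem delivers, and is the reason why the duality estimate of Lemma~\ref{lem:u_t} is formulated in $L^1(0,T;W^{-1,1}_\T)$ rather than an $L^2$-in-time variant.
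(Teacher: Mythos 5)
Your proof is correct, and it takes a genuinely different route from the paper's. Both arguments begin identically, extracting uniform equi-integrability of $\{u_{h_m,\dt_m}(\cdot,t)\}_{m,t}$ from \eqref{en_decay} via de la Vall\'ee Poussin and Dunford--Pettis. Where they diverge is the uniform-in-$t$ compactness mechanism: the paper works in the compact metric space $E_R$ endowed with the bounded-Lipschitz distance and derives, directly from the scheme, a quantitative equicontinuity estimate ${\rm dist}_{\rm BL}(u_{h,\dt}(\cdot,t+\tau),u_{h,\dt}(\cdot,t))\leq C\sqrt{\tau+\dt}$ --- obtained by summing \eqref{sch_formcompacte} between the two times, applying Cauchy--Schwarz with the cumulated dissipation \eqref{dissipation_control} on one side and the bound $T_\DD(u_\T^n;\varphi_\T,\varphi_\T)\leq C$ (from \eqref{eq:grad-Linf0} and \eqref{est:u_D_L1}) on the other --- and then concludes with the refined Arzel\`a--Ascoli theorem of \cite[Proposition 3.3.1]{AGS08}. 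You instead recycle Lemma~\ref{lem:u_t} to bound the total variation in time of the scalar pairings $t\mapsto\int_\O u_{h_m,\dt_m}(\cdot,t)\,\varphi\,\d\x$, and then invoke Helly's selection theorem plus a diagonal extraction over a countable dense family of test functions, upgrading narrow convergence to weak-$L^1$ convergence at each $t$ via Dunford--Pettis. Your route is more economical --- it reuses an estimate the paper has already proved rather than re-deriving a time modulus from the scheme --- and it suffices for the proposition as stated; the price is that Helly delivers only pointwise-in-$t$ convergence, whereas the paper's argument additionally yields uniform-in-time convergence and (weak) time-continuity of the limit with a $\sqrt\tau$ modulus, information the paper's phrasing ``uniform in time weak-$L^1$ in space'' alludes to. Two minor remarks: the identity $\llbracket v_\T,\varphi_\T\rrbracket_\T=\int_\O v_h\varphi\,\d\x$ you rely on is indeed exact for the projection \eqref{eq:proj_phi} (a mean-value projection, not a nodal one as you call it), because the boundary primal cells carry no area and are excluded from the bracket; and your closing lower semi-continuity argument for $\sup_t\int_\O H(u)\,\d\x\leq C$ is exactly the paper's observation that $E_R$ is weakly closed.
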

\begin{proof}
Let $R>0$ be arbitrary. 
As a consequence of the de La Vall\'ee Poussin theorem~\cite{DM78} the space 
$$
E_R = \left\{ f: \O\to\R_+\; \middle|\; \int_\O f \d\x = \int_\O u_0\d\x \; \text{and}\; \int_\O H(f) \dx \leq R\right\}
$$
is equi-integrable in $L^1(\O)$, thus it follows from Dunford-Pettis theorem that $E_R$ is relatively compact for the weak-$L^1(\O)$ topology. Since the function $f \mapsto \int_\O H(f) \d\x$ is lower semi-continuous, any limit value for a sequence of $E_R$ 
also belongs to $E_R$, hence $E_R$ is closed, thus compact. 

Since $\O$ is bounded and because $E_R$ is equi-integrable, the $L^1(\O)$-weak topology coincides with the 
topology corresponding to the narrow convergence of measures restricted to $E_R$. It can thus be endowed with the 
bounded-Lipschitz metric:
$$
{\rm dist}_{\rm BL}(f_n, f) \underset{n\to\infty}\longrightarrow 0 \quad \text{iff} \quad f_n \underset{n\to\infty}\longrightarrow f \; \text{weakly in $L^1(\O)$}.
$$
In the above formula, $f$ and $f_n$ ($n\ge 1$) belong to $E_R$, and 
$$
{\rm dist}_{\rm BL}(f, g) = \sup_{
 \|\grad \varphi\|_\infty \leq 1
}\int_\O (f-g) \varphi\, \d\x.
$$
We refer for instance to \cite[Theorem 5.9]{Santambrogio_OTAM} for the equivalence of the topology induced 
by the bounded-Lipschitz distance with the one of narrow convergence of positive measures. The fact that this latter topology 
coincides with the weak-$L^1(\O)$ topology on $E_R$ results from its equi-integrability.

As a consequence of Lemma~\ref{lem:estsolapp}, there exists $R$ such that $u_{h_m,\dt_m}(\cdot,t)$ belongs to $E_R$ for all $t \in [0,T]$ 
and all $m \ge 1$.
Let $\tau \in (0,T)$, and let $t \in (0,T-\tau)$ and let $\varphi: \ov \O \to \R$ be Lipschitz continuous 
with $\|\grad \varphi\|_\infty \leq 1$. 
Define $\varphi_{\T_m}$ as in~\eqref{eq:proj_phi}, then (we remove the subscript $m$ for legibility)
$$
\int_\O \left(u_{h,\dt}(\x,t+\tau) -  u_{h,\dt}(\x, t)\right) \varphi(\x) \d \x = 
\left\llbracket u_{\T}^{N^{(2)}}  - u_{\T}^{N^{(1)}}, \varphi_{\T} \right\rrbracket_{\T}
$$
where $N^{(1)}$ and $N^{(2)}$ are the positive integers such that 
$$
\left(N^{(1)} - 1\right) \dt < t \leq N^{(1)} \dt, \qquad \left(N^{(2)} - 1\right) \dt < t + \tau\leq N^{(2)} \dt.
$$
Using the scheme~\eqref{sch_formcompacte}, we obtain that 
$$
\left\llbracket u_{\T}^{N^{(2)}}  - u_{\T}^{N^{(1)}}, \varphi_{\T} \right\rrbracket_{\T} = 
\sum_{n=N^{(1)} +1}^{N^{(2)}} \dt \left( 
T_{\DD} \left( u_\T^{n} ; g_\T^{n}, \varphi_\T \right) + \kappa \left\llbracket \Pp^\T g_\T^n, \varphi_\T \right\rrbracket_\T \right).
$$
Cauchy-Schwarz inequality yields 
\begin{multline*}
\left\llbracket u_{\T}^{N^{(2)}}  - u_{\T}^{N^{(1)}}, \varphi_{\T} \right\rrbracket_{\T} \leq 
\left( 
\sum_{n=N^{(1)} +1}^{N^{(2)}} \dt \left( 
T_{\DD} \left( u_\T^{n} ; g_\T^{n}, g_\T^{n} \right) + \kappa \left\llbracket \Pp^\T g_\T^n, g_\T^{n} \right\rrbracket_\T \right)
\right)^{1/2}\\
\times \left( 
\sum_{n=N^{(1)} +1}^{N^{(2)}} \dt \left( 
T_{\DD} \left( u_\T^{n} ; \varphi_\T, \varphi_\T \right) + \kappa \left\llbracket \Pp^\T \varphi_\T, \varphi_\T \right\rrbracket_\T \right)
\right)^{1/2}.
\end{multline*}
The first term in the right-hand side is uniformly bounded by $\bbE(0)$ thanks to \eqref{dissipation_control}.
On the other hand, Estimate \eqref{eq:grad-Linf0} together with $\|\grad \varphi \|_\infty \leq 1$ provide that 
$$
T_{\DD} \left( u_\T^{n} ; \varphi_\T, \varphi_\T \right) \leq C \int_\O u_{h,\DD}^n \d\x, \qquad \forall n \in \{1,\dots, N_T\}.
$$
Owing to \eqref{est:u_D_L1}, we get that 
$$
T_{\DD} \left( u_\T^{n} ; \varphi_\T, \varphi_\T \right) \leq C, \qquad \forall n \in \{1,\dots, N_T\}.
$$
Hereby, we deduce that 
$$
\left\llbracket u_{\T}^{N^{(2)}}  - u_{\T}^{N^{(1)}}, \varphi_{\T} \right\rrbracket_{\T} \leq  C \sqrt{(N^{(2)} - N^{(1)} ) \dt} 
\leq C \sqrt{\tau +  \dt}, 
$$
and since $\varphi$ was chosen arbitrarily in $\{\varphi: \O \to \R\, | \, \|\grad \varphi\|_\infty \leq 1\}$, we obtain that 
$$
{\rm dist}_{\rm BL}(u_{h,\dt}(\cdot,t+\tau), u_{h,\dt}(\cdot, t)) \leq C\sqrt{\tau + \dt}, \qquad \forall t \in [0,T-\tau).
$$
We can apply the refined version of Arzel\`a-Ascoli theorem \cite[Proposition 3.3.1]{AGS08} (see also 
\cite[Theorem 4.26]{Kangourou}) and claim that $u_{h,\dt}$ converges uniformly towards $u \in C([0,T);E_R)$, $E_R$ being endowed 
with the $L^1(\O)$-weak topology.
\end{proof}

\subsection{Identification of the limit}

The goal of this section is to show that the limit function $u$ exhibited 
in Proposition~\ref{prop:strong-conv} is a weak solution in the sense 
of Definition~\ref{Def:weak}. 

The second and last point to check to complete the proof of  Theorem~\ref{thm:main2} is 
the fact that $u$ is a solution to~\eqref{pb} in the distributional sense, i.e., that the 
weak formulation~\eqref{eq:weak} is fulfilled. This is the purpose of the following statement. 

\begin{prop}\label{prop:identify}
Let $u$ be as in Proposition~\ref{prop:strong-conv}, then $u$ satisfies the weak formulation~\eqref{eq:weak}.
\end{prop}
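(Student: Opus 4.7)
Given $\varphi\in C^\infty_c(\ov\O\times[0,T))$, I would project it onto the mesh as in~\eqref{eq:proj_phi} to obtain $\varphi_\T^n$, test~\eqref{sch_formcompacte} against $\psi_\T=\varphi_\T^n$, multiply by $\dt$, and sum over $n\in\{0,\dots,N_T-1\}$; the compact support in time kills the boundary term at $n=N_T$ for $\dt_m$ small enough. An Abel summation transfers the time difference onto $\varphi_\T$, and the strong $L^p(0,T;L^1(\O))$ convergences of $u_{h_m,\dt_m,\M_m}$ and $u_{h_m,\dt_m,\M_m^*}$ from Proposition~\ref{prop:strong-conv}, combined with the uniform convergence of $\varphi_\T^0\to\varphi(\cdot,0)$ and of the time-divided differences to $\p_t\varphi$, yields the limit $-\iint_{Q_T}u\,\p_t\varphi\,\d\x\d t-\int_\O u_0\varphi(\cdot,0)\d\x$. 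For the penalization piece, Cauchy-Schwarz combined with~\eqref{dissipation_control} reduces matters to $\sum_n\dt\,\llbracket\Pet\varphi_\T^n,\varphi_\T^n\rrbracket_\T\leq C\|\grad\varphi\|_\infty^2\,T\,\size(\T)^{2-\beta}$ (from $|\varphi_K-\varphi_{K^*}|\leq C\size(\T)\|\grad\varphi\|_\infty$ inserted in~\eqref{prop_pen}), which vanishes since $\beta\in(0,2)$.

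\textbf{Drift part of $T_\DD$.} Splitting $g_\T^{n+1}=\log u_\T^{n+1}+V_\T$ separates $T_\DD(u_\T^{n+1};g_\T^{n+1},\varphi_\T^n)$ into a drift and a diffusion contribution. Rewriting it as an integral, the drift part reads $\iint_{Q_T}u_{h_m,\dt_m,\DD_m}\,\bLambda^{\DD_m}\grad^{\DD_m}V_{\T_m}\cdot\grad^{\DD_m}\varphi_{\T_m}\,\d\x\d t$, which passes to $\iint_{Q_T}u\,\bLambda\grad V\cdot\grad\varphi$ by pairing the strong $L^1(Q_T)$ convergence of $u_{h,\DD}$ from~\eqref{eq:convL1-strong-hD} against the uniform convergence of $\bLambda^\DD$, $\grad^\DD V_\T$, and $\grad^\DD\varphi_\T$, all uniformly bounded by $\|\bLambda\|_\infty$, $\|\grad V\|_\infty$ and $\|\grad\varphi\|_\infty$ respectively (consequences of the regularity of $\bLambda$, $V$ and $\varphi$).

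\textbf{Diffusion part, the main obstacle.} Applying the logarithmic-mean identity $\log a-\log b=2(\sqrt a-\sqrt b)/\mu^{\log}(\sqrt a,\sqrt b)$ on each primal and dual edge of each diamond (which is legitimate thanks to the strict positivity ensured by Lemma~\ref{lem:pos}), I would rewrite the diffusion contribution as
\[
2\sum_n\dt\sum_{\Ds\in\DD}\delta^\Ds\xi_\T^{n+1}\cdot \mathrm{diag}\bigl(\widetilde\xi_\sig^{\Ds,n+1},\widetilde\xi_{\sige}^{\Ds,n+1}\bigr)\bbA^\Ds\,\delta^\Ds\varphi_\T^n,
\]
with $\widetilde\xi_\sig^{\Ds,n+1}=r^\Ds(u_\T^{n+1})/\mu^{\log}(\xi_K^{n+1},\xi_L^{n+1})$, and analogously in the dual direction. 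The target limit is $\iint_{Q_T}2\sqrt u\,\bLambda\grad\sqrt u\cdot\grad\varphi=\iint_{Q_T}\bLambda\grad u\cdot\grad\varphi$, and I would reach it by a weak-strong pairing: weak $L^2(Q_T)^2$ convergence of $\grad^\DD\xi_{h,\dt}\to\grad\sqrt u$ from~\eqref{eq:conv-xi-weak} against strong $L^2(Q_T)$ convergence towards $\sqrt u$ of the diamond-wise constant reconstructions built from $(\widetilde\xi_\sig^{\Ds,n+1})$ and $(\widetilde\xi_{\sige}^{\Ds,n+1})$. This last convergence is the hardest step: it exploits that the four corner values $\xi_K,\xi_L,\xi_{K^*},\xi_{L^*}$ of each diamond converge to the same limit $\sqrt u$ (the common strong $L^2$ limit of $\xi_{h,\dt,\M}$ and $\xi_{h,\dt,\M^*}$ follows from Proposition~\ref{prop:strong-conv} together with the penalization-based identification argued in Step~2 of its proof), and proceeds through Vitali's theorem, the equi-integrability being furnished by the $L^\infty(0,T;L^1(\O))$ bound on $u$ together with the Fisher-type bound~\eqref{eq:Fisher-sum}. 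The uniform convergence of $\grad^\DD\varphi_\T\to\grad\varphi$ then closes the weak-strong pairing and concludes the proof.
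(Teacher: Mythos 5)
Your overall architecture coincides with the paper's proof: same projection of $\varphi$, same decomposition into time, drift, diffusion and penalization contributions, the same $\size(\T)^{2-\beta}$ estimate for the penalization term, the same weak--strong pairing idea for the drift term, and the correct target identity $2\sqrt u\,\bLambda\grad\sqrt u\cdot\grad\varphi=\bLambda\grad u\cdot\grad\varphi$. The gap is in the diffusion term, precisely at the step you identify as the hardest. After the logarithmic-mean rewriting, the weight attached to the primal component of the diamond is $\widetilde\xi_\sig^{\Ds,n}=r^\Ds(\ut^n)/\xi_{KL}^n$, where $\xi_{KL}^n$ denotes the logarithmic mean of $\xi_K^n$ and $\xi_L^n$ (the paper's notation~\eqref{eq:uKL}). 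You assert that the diamond-wise reconstruction built from these weights converges strongly in $L^2(Q_T)$ to $\sqrt u$ because the four corner values of $\xi$ on each diamond converge to the same limit. This does not follow: $\xi_{KL}^n$ is squeezed between $\min(\xi_K^n,\xi_L^n)$ and $\max(\xi_K^n,\xi_L^n)$, but $r^\Ds(\ut^n)$ also involves the dual values, so on a diamond where $u_K^n=u_L^n$ is tiny while $u_{K^*}^n$ is of order one the ratio behaves like $u_{K^*}^n/(4\sqrt{u_K^n})$ and is controlled by none of the corner values. None of the available uniform estimates (mass, entropy, Fisher information, penalization) even gives an $L^2(Q_T)$ bound on this quantity, so the claimed strong convergence --- and hence the equi-integrability argument via Vitali --- cannot be substantiated. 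A second, related defect is that even with convergent weights, the two diagonal entries $\widetilde\xi_\sig^{\Ds,n}$ and $\widetilde\xi_{\sige}^{\Ds,n}$ differ, so the expression is not of the form (scalar function)$\,\times\,\bLambda_{h,\DD}\grad^h\xi_{h,\dt}\cdot\grad^h\varphi$ and cannot be paired directly against the weak $L^2$ convergence of $\grad^h\xi_{h,\dt}$; a common scalar must first be substituted for both entries and the resulting error controlled.

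The paper's proof is organized exactly so as to avoid this trap. It replaces both weights by the common scalar $\sqrt{r^\Ds(\ut^{n+1})}$, so that the principal part becomes $2\iint_{Q_T}\sqrt{u_{h,\dt,\DD}}\,\bLambda_{h,\DD}\grad^h\xi_{h,\dt}\cdot\grad^h\varphi_{h,\dt}(\cdot,\cdot-\dt)\,\d\x\,\d t$, which passes to the limit by the weak--strong pairing you describe. In the remainder $B_{2,m}^{(2)}$, the factorization is chosen so that the factor controlled by the discrete Fisher information $\widehat\bbI_\T^n$ is $\sqrt{r^\Ds(\ut^n)}\,\delta^\Ds\log(\ut^n)$, while the factor that must vanish is $\xi_{KL}^n-\sqrt{r^\Ds(\ut^n)}$ --- a difference of two quantities each squeezed between corner values of $\xi$, hence tending to $0$ in $L^2(Q_T)$ by comparison with $\sqrt{u_{h,\dt,\DD}}$ and $\widetilde\xi_{h,\dt,\DD}$. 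Your factorization places the potentially singular ratio $r^\Ds(\ut^n)/\xi_{KL}^n$ on the side that must converge strongly; to repair the argument you need to revert to a splitting in which $\delta^\Ds\log(\ut^n)$, not $\delta^\Ds\xi_\T^n$, carries the Fisher-information bound.
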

\begin{proof}
Here again, we remove the subscript $m$ when it appears us to be detrimental for the readability. 
Let $\varphi \in C^\infty_c(\ov \O \times [0,T))$, then denote by 
$$
\varphi_K^n = \varphi(\x_K, t^n), \qquad \varphi_{K^*}^n = \varphi(\x_{K^*}, t^n), \qquad 
\forall K \in \ov \M, \; \forall K^* \in \ov{\M^*}.
$$
Choosing the corresponding $\psi_\T = \varphi_\T^n$ in~\eqref{sch_formcompacte}, multiplying by $\dt$ and 
summing over $n \in \{0,\dots, N_{T-1}\}$ leads to 
\be\label{eq:ABC_m}
A_m + B_m + C_m = 0, 
\ee
where
\begin{align*}
A_m =&\, \sum_{n=0}^{N_{T-1}} \llbracket u_\T^{n+1} - u_\T^n, \varphi_\T^n \rrbracket_\T, \\
B_m = &\, \sum_{n=0}^{N_{T-1}} \dt \, T_\DD (u_\T^{n+1}; g_\T^{n+1}, \varphi_\T^n ), \\
C_m = &\, \kappa \sum_{n=0}^{N_{T-1}} \dt\, \llbracket \mathcal P^\T g_\T^{n+1} , \varphi_\T^n \rrbracket_\T.
\end{align*} 
For the terms $A_m$ and $C_m$, we can proceed as is~\cite{CKM15} to get 
\be\label{eq:ACm}
A_m \underset{m\to\infty}\longrightarrow -\iint_{Q_T} u \p_t \varphi \, \d\x \d t - \int_\O u_0 \varphi(\cdot, 0) \d\x,
\qquad \text{and}\qquad
C_m \underset{m\to\infty}\longrightarrow 0.
\ee
Let us now detail the treatment of the term $B_m$ and start by splitting it into
\be\label{eq:Bm}
B_m = B_{1,m} + B_{2,m}, 
\ee
where 
\begin{align*}
B_{1,m} = &\, \sum_{n=0}^{N_{T-1}} \dt \sum_{\Ds \in \DD} r^\Ds(u_\T^{n+1}) \delta^\Ds V_\T \cdot \bbA^\Ds \delta^\Ds \varphi_\T^n \\
=&\, \iint_{Q_T} u_{h,\dt,\DD}(\x,t) \bLambda_{h,\DD} \grad^h V_h(\x) \cdot  \grad^h \varphi_{h,\dt}(\x, t-\dt) \d\x \d t,
\end{align*}
and
$$
B_{2,m} = \sum_{n=0}^{N_{T-1}} \dt \sum_{\Ds \in \DD} r^\Ds(u_\T^{n+1}) \delta^\Ds \log(u_\T^{n+1}) \cdot \bbA^\Ds \delta^\Ds \varphi_\T^n.
$$
Since $V$ and $\varphi$ are smooth functions, one has 
$$
\grad^h V_h \underset{m\to\infty} \longrightarrow \grad V \quad \text{uniformly on}\; \O, 
\qquad
\grad^h  \varphi_{h,\dt}(\cdot, \cdot-\dt)  \underset{m\to\infty} \longrightarrow \grad \varphi \quad \text{uniformly on}\; Q_T, 
$$
whereas $\bLambda_{h,\DD}$ converges a.e. towards $\bLambda$.
Then it follows from \eqref{eq:convL1-strong-hD} that 
\be\label{eq:B1m} 
B_{1,m} \underset{m\to\infty} \longrightarrow  \iint_{Q_T} u \bLambda \grad V \cdot \grad \varphi \d\x \d t.
\ee
The last term $B_{2,m}$ is treated following the method proposed in~\cite{CG2016} that consists in writing 
$$
B_{2,m} = B_{2,m}^{(1)}+ B_{2,m}^{(2)}, 
$$
with 
\begin{align*}
B_{2,m}^{(1)} =& \, 2 \iint_{Q_T} \sqrt{u_{h,\dt,\DD}} \bLambda_{h,\DD} \grad^h \xi_{h,\dt} \cdot \grad^h \varphi_{h,\dt}(\cdot, \cdot - \dt) \d\x \d t, \\
B_{2,m}^{(2)} =& \, \sum_{n=1}^{N_T} \dt \sum_{\Ds \in \DD} \sqrt{r^\Ds(u_\T^n)}
\delta^\Ds \log(u_\T^n) \cdot \\
& \hspace{3cm}\begin{pmatrix} {\xi_{KL}^n} - \sqrt{r^\Ds(u_\T^n)} & 0 \\
0 &  {\xi_{K^*L^*}^n} - \sqrt{r^\Ds(u_\T^n)} 
\end{pmatrix} 
\bbA^\Ds \delta^\Ds \varphi_\T^{n-1},
\end{align*}
where we have set 
\be\label{eq:uKL}
{\xi_{KL}^n} =  \begin{cases}
2 \frac{{\xi_K^n} - {\xi_L^n}}{\log(u_K^n) - \log(u_L^n)} & \text{if}\; u_K^n \neq u_L^n, \\
{\xi_{K}^n}  &\text{if}\; u_K^n = u_L^n,
\end{cases}
\qquad
{\xi_{K^*L^*}^n} =  \begin{cases}
2 \frac{{\xi_{K^*}^n} - \xi_{L^*}^n}{\log(u_{K^*}^n) - \log(u_{L^*}^n)} & \text{if}\; u_{K^*}^n \neq u_{L^*}^n, \\
{\xi_{K^*}^n}  &\text{if}\; u_{K^*}^n = u_{L^*}^n.
\end{cases}
\ee
We know that, up to a subsequence, $\sqrt{u_{h,\dt,\DD}}$ converges strongly in $L^2(Q_T)$ towards $\sqrt{u}$, 
whereas $\grad^h \xi_{h,\dt}$ converges weakly in $L^2(Q_T)^2$ towards $\grad \sqrt{u}$, and 
$\grad^h \varphi_{h,\dt}(\cdot, \cdot - \dt)$ converges uniformly towards $\grad \varphi$. Thus we can pass to the 
limit in $B_{2,m}^{(1)}$ and obtain that 
\be\label{eq:B_2m^1}
B_{2,m}^{(1)} \underset{m\to\infty}\longrightarrow  2 \iint_{Q_T} \sqrt{u} \bLambda \grad \sqrt{u} \cdot \grad \varphi \d\x \d t 
= \iint_{Q_T} \bLambda \grad u \cdot \grad \varphi \d\x \d t.
\ee

In order to show that $B_{2,m}^{(2)}$ tends to $0$, we need a few preliminaries. 
Owing to the definition~\eqref{eq:uKL} of $\xi_{KL}^n$ and $\xi_{K^*L^*}^n$, one always has 
$$
\min\left(\xi_K^n, \xi_L^n\right) \leq \xi_{KL}^n \leq \max\left(\xi_K^n, \xi_L^n\right), 
\qquad 
\min\left(\xi_{K^*}^n, \xi_{L^*}^n\right) \leq \xi_{K^*L^*}^n \leq \max\left(\xi_{K^*}^n, \xi_{L^*}^n\right), 
$$
so that, denoting by $\widetilde \xi_{h,\dt,\DD}$ and $\widetilde \xi^*_{h,\dt,\DD}$ the functions defined almost everywhere by 
$$
\widetilde \xi_{h,\dt,\DD}(\x,t) = {\xi_{KL}^n} \quad \text{and}\quad  \widetilde \xi^*_{h,\dt,\DD}(\x,t) ={\xi_{K^*L^*}^n}
\quad \text{if}\; (\x,t) \in \Ds \times (t_{n-1}, t_n],
$$
one obtains that 
$$
\| \xi_{h,\dt,\M} - {\widetilde \xi_{h,\dt,\DD}} \|_{L^2(Q_T)}^2 
\leq  \sum_{n=1}^{N_T} \dt \sum_{\Ds \in \DD} \md |\xi_K^n - \xi_L^n|^2 \leq C \size(\T)^2  \|\grad^h \xi_{h,\dt}\|_{L^2(Q_T)}^2
$$
for some $C$ depending only on $\theta^\star$ and $\bLambda$.
Similarly, one has 
$$
\| \xi_{h,\dt,\M^*} - {\widetilde \xi^*_{h,\dt,\DD}} \|_{L^2(Q_T)} \leq C \size(\T) \|\grad^h \xi_{h,\dt}\|_{L^2(Q_T)}.
$$
Bearing in mind that $\grad^h \xi_{h,\dt}$ is uniformly bounded in $L^2(Q_T)^2$ w.r.t. $m$, this ensures in particular that 
\be\label{eq:conv-tilde-xi}
 {\widetilde \xi_{h,\dt,\DD}} \underset{m\to\infty}\longrightarrow \xi = \sqrt{u} \quad \text{and} \quad 
 {\widetilde \xi^*_{h,\dt,\DD}} \underset{m\to\infty}\longrightarrow \xi = \sqrt{u}  \quad \text{in}\; L^2(Q_T).
\ee
As a consequence of~\eqref{eq:convL1-strong-hD}, the function $\sqrt{u_{h,\dt,\DD}}$ also converges towards $\xi$ in $L^2(Q_T)$. 

We now have at hand all the necessary material to study $B_{2,m}^{(2)}$. It results from Cauchy-Schwarz inequality that 
\begin{multline*}
B_{2,m}^{(2)} \leq \left(\sum_{n=1}^{N_T} \dt \widehat \bbI_\T^n \right)^{1/2} \\
\times \left(\sum_{n=1}^{N_T} \dt \sum_{\Ds \in \DD} 
\begin{pmatrix} \left|{\xi_{KL}^n} - \sqrt{r^\Ds(u_\T^n)}\right|^2 & 0 \\
0 &  \left|{\xi_{K^*L^*}^n} - \sqrt{r^\Ds(u_\T^n)}\right|^2 
\end{pmatrix} \delta^\Ds \varphi_\T^{n-1} \cdot \bbB^\Ds  \delta^\Ds \varphi_\T^{n-1} 
\right)^{1/2}.
\end{multline*}
Thanks to the regularity of the mesh and of $\varphi$, one has 
$$
\delta^\Ds \varphi_\T^{n-1} \cdot \bbB^\Ds  \delta^\Ds \varphi_\T^{n-1} \leq C \md
$$
for some $C>0$ depending only on $\theta^\star$ and on $\|\grad \varphi\|_\infty$, whereas Lemma~\ref{lem:Fisher-0} and \eqref{dissipation_control} ensure 
that $$\sum_{n=1}^{N_T} \dt \widehat \bbI_\T^n \leq C.$$
Hence, we get 
$$
B_{2,m}^{(2)} \leq C \left(\|\widetilde \xi_{h,\dt,\DD} - \sqrt{u_{h,\dt,\DD}}\|_{L^2(Q_T)} + 
\|\widetilde \xi^*_{h,\dt,\DD} - \sqrt{u_{h,\dt,\DD}}\|_{L^2(Q_T)}\right).
$$
Using \eqref{eq:conv-tilde-xi} together with the fact that $\sqrt{u_{h,\dt,\DD}}$ also converges towards $\xi$ in $L^2(Q_T)$, 
we get that 
\be\label{eq:B_2m^2}
B_{2,m}^{(1)} \underset{m\to\infty}\longrightarrow  0.
\ee
We conclude the proof by putting together the statements~\eqref{eq:ACm}, \eqref{eq:Bm}, \eqref{eq:B1m}, \eqref{eq:B_2m^1} and 
\eqref{eq:B_2m^2}  in~\eqref{eq:ABC_m}.
\end{proof}

\section{Numerical experiments}\label{sec:num}

\subsection{About the practical implementation}

The nonlinear system \eqref{scheme} is solved thanks to Newton's method. In order to avoid 
the singularity of the $\log$ near $0$, the sequence ${(u_\T^{n+1,i})}_{i\ge 0}$ to compute $u_\T^{n+1}$ 
from the previous state ${(u_\T^{n})}_{i\ge 0}$ is initialized by $u_\T^{n+1,0} = \max (u_\T^n, 10^{-12})$. 
In practice, we observe that the threshold criterion is not used.
As a stopping criterion, we require the $\ell^1$-norm of the residual to be smaller than $10^{-10}$.

\subsection{Convergence w.r.t. to the discretization parameters}

We test our method on a test case inspired from the one in~\cite{CG2016}. We set  $\O = (0,1)^2$, and $V(x_1,x_2) = -x_2$. 
The exact solution $u_{\rm ex}$ is then defined by 
$$
u_{\rm ex}((x_1, x_2), t) =  e^{-\alpha t+ \frac{x_2}{2}}\left(\pi\cos(\pi x_2)+\frac12\sin(\pi x_2)\right)+\pi e^{\left(x_2-\frac12\right)}
$$
with $\alpha = \pi^2 + \frac14$.
We choose $u_0 = u_{\rm ex}(\cdot, 0)$. Note that $u_0$ vanishes on $\{x_2 = 1\}$. 

In order to illustrate the convergence and the robustness of our method, we study its convergence on two sequences 
of meshes. The first sequence of primal meshes is made of successively refined Kershaw meshes. 
The second sequence of primal meshes is the so-called quadrangle meshes {\tt mesh\_quad\_i} of the FVCA8 benchmark on incompressible flows. 
One mesh of each sequence is depicted in Figure~\ref{fig:mesh}.
In the refinement procedure, the time step is divided by $4$ when the mesh size is divided by $2$.

\begin{figure}[htb]
\begin{center}
\includegraphics[height=3.3cm]{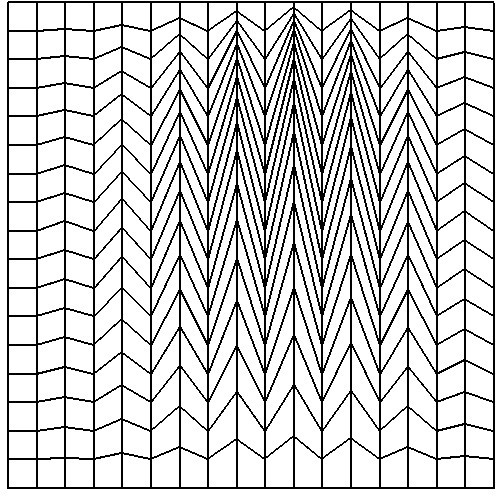}
\hspace{1cm}
\includegraphics[height=3.3cm]{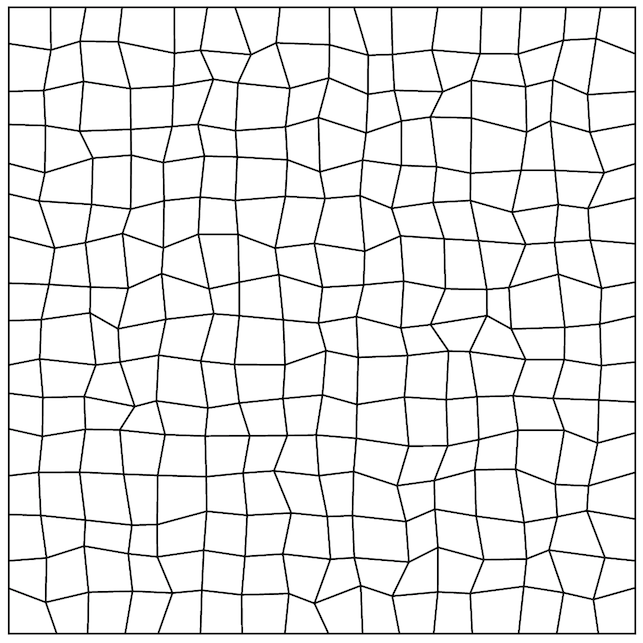}

\end{center}
\caption{Left: First Kershaw mesh. Right: Third quadrangle mesh. 
}
\label{fig:mesh}
\end{figure}

We have introduced a penalization operator in order to  prove that reconstruction  on the primal mesh $u_{h,\M,\dt} $ and the reconstruction on the dual mesh  $u_{h,\Mie,\dt}$ converge to the same limit. In Table~\ref{tab:uMuMie}, we compute normU the $L^2(\O\times(0,T))^2$  norm
of the difference between the two different reconstructions and ordU the corresponding convergence order for different values of $\kappa$ the penalization parameter.
We numerically observe the same result : the two reconstructions 
converge to the same limit even if $\kappa$ is zero.

\begin{table}[!ht]
 \begin{center}
  \begin{tabular}{|c|c||c|c||c|c|}
  \hline
    M&dt&\multicolumn{2}{|c|}{$\kappa=0$}&\multicolumn{2}{|c|}{$\kappa=10^{-1}$}\\ \hline
    & &normU &ordU&normU &ordU\\ \hline
1&4.032E-03&   1.798E-01&---    &  1.796E-01&---  \\ \hline
2&1.008E-03&   9.316E-02&        0.95&9.313E-02&        0.94\\ \hline
3&2.520E-04&   4.717E-02&       1.03& 4.716E-02&       1.03\\ \hline
4&6.300E-05&   2.361E-02&        1.05& 2.361E-02&        1.05\\ \hline
5&1.575E-05&   1.135E-02&        0.87&1.135E-02&        0.87\\ \hline
  \end{tabular} 
 \end{center}  
 \caption{Numerical results on the Quadrangle mesh family, final time T=0.25.}
 \label{tab:uMuMie}
\end{table}

In the following of this section, the penalization parameter $\kappa$ is set to zero.
In Tables~\ref{tab:Ker} and \ref{tab:quad},  the quantities erru and errgu respectively denote the 
$L^\infty((0,T);L^2(\O))$ error on the solution and the 
$L^2(\O\times(0,T))^2$ error on the gradient, whereas ordu and ordgu are the corresponding convergence orders. 
It appears that the method is slightly more than second order accurate w.r.t. space. 

The maximal (resp. mean) number of Newton iterations 
by time step is denoted by $N_{\text{max}}$ (resp. $N_{\text{mean}}$). 
We observe that the needed number of Newton iterations starts from a reasonably small value and falls 
down to $1$ after a small number of time steps. Therefore, our method does not imply an important 
extra computational cost when compared to linear methods. 
Eventually, we can check that the minimal value min $u_\T^n$ remains 
strictly greater than 0, as proved in Lemma~\ref{lem:pos}.

\begin{table}[!ht]
 \begin{center}
  \begin{tabular}{|c|c||c|c|c|c|c|c|c|}
  \hline
    M&dt&errgu&ordgu&erru&ordu&$N_{\text{max}}$&$N_{\text{mean}}$&Min $u^n$\\ \hline
1&2.0E-03&  6.693E-02& --- &7.254E-03&--- &9&2.15 &1.010E-01\\\hline
2&5.0E-04&  2.353E-02&1.54 &1.751E-03&2.09&8&2.02 &2.582E-02\\\hline
3&1.25E-04& 1.235E-02&1.61 &7.237E-04&2.20&7&1.49 &6.488E-03\\\hline
4&3.125E-05&7.819E-03&1.60 &3.962E-04&2.11&7&1.07 &1.628E-03 \\\hline
5&3.125E-05&5.507E-03&1.58 &2.556E-04&1.98&7&1.04 &1.628E-03\\\hline
  \end{tabular} 
 \end{center}  
 \caption{Numerical results on the Kershaw mesh family, final time T=0.25.}
 \label{tab:Ker}
\end{table}
\begin{table}[!ht]
 \begin{center}
  \begin{tabular}{|c|c||c|c|c|c|c|c|c|}
  \hline
    M&dt&errgu&ordgu&erru&ordu&$N_{\text{max}}$&$N_{\text{mean}}$&Min $u^n$\\ \hline
1&4.032E-03&1.754E-01 &--- &2.149E-02 &--- &9& 2.26& 1.803E-01\\ \hline
2&1.008E-03&5.933E-02 &1.56&5.055E-03 &2.08&9& 2.04& 5.079E-02\\ \hline
3&2.520E-04&2.294E-02 &1.44&1.299E-03 &2.06&8& 1.96& 1.352E-02\\ \hline
4&6.300E-05&8.631E-03 &1.48&3.256E-04 &2.09&8& 1.22& 3.349E-03\\ \hline
5&1.250E-05&2.715E-03 &1.37&7.702E-05 &1.70&7& 1.01& 8.695E-04 \\\hline
  \end{tabular} 
 \end{center}  
 \caption{Numerical results on the Quadrangle mesh family, final time T=0.25.}
  \label{tab:quad}
\end{table}

\subsection{Long time behavior}

In this section, the penalisation parameter $\kappa$ is set to zero.
The discrete stationary solution $u_\T^\infty$ is defined by 
$u_K^\infty = \rho e^{-V(x_K)}$ and $u_{K^\ast}^\infty = \rho^\ast e^{-V(x_{K^\ast})}$ for $K \in \overline {\mathfrak{M}}$ 
and $K^\ast \in \overline {\mathfrak{M}}^\ast$, the quantities
$\rho$ and $\rho^\ast$ being fixed so that  
$
\sum_{K\in \M} u_K^\infty m_K= \sum_{K\in \overline \M^\ast} u_{K^\ast}^\infty m_{K^\ast}= \int_\O u_0(x) dx.
$
In order to give an evidence of the good large-time behavior of our scheme, we plot in Figure~\ref{fig:EntRel} 
the evolution of the relative energy 
$$
{\mathbb E}_\T^{n}-{\mathbb E}_\T^{\infty} = \left\llbracket u_\T^n \log\left(\frac{u_\T^n}{u_\T^\infty}\right) - u_\T^n + u_\T^\infty, 1_\T\right\rrbracket_\T, \quad n \ge 0
$$
computed on the Kershaw meshes and on the quadrangle meshes. 
We observe the exponential decay of the relative energy, recovering on general grids the 
behavior of the Scharfetter-Gummel scheme~\cite{Chatard2011}.
\begin{figure}[htb]
\begin{center}
\includegraphics[height=5.3cm]{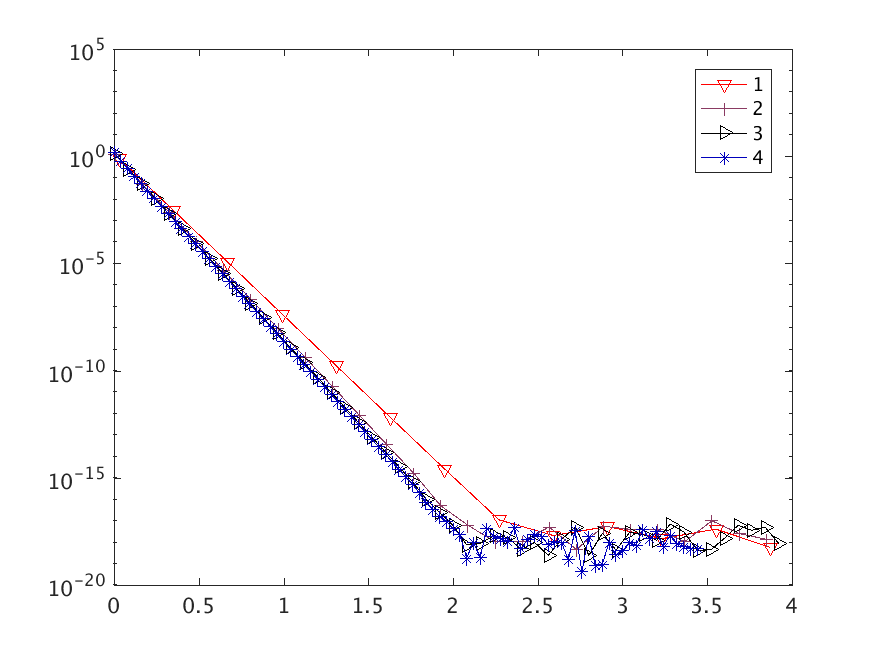}
\hspace{0.2cm}
\includegraphics[height=5.3cm]{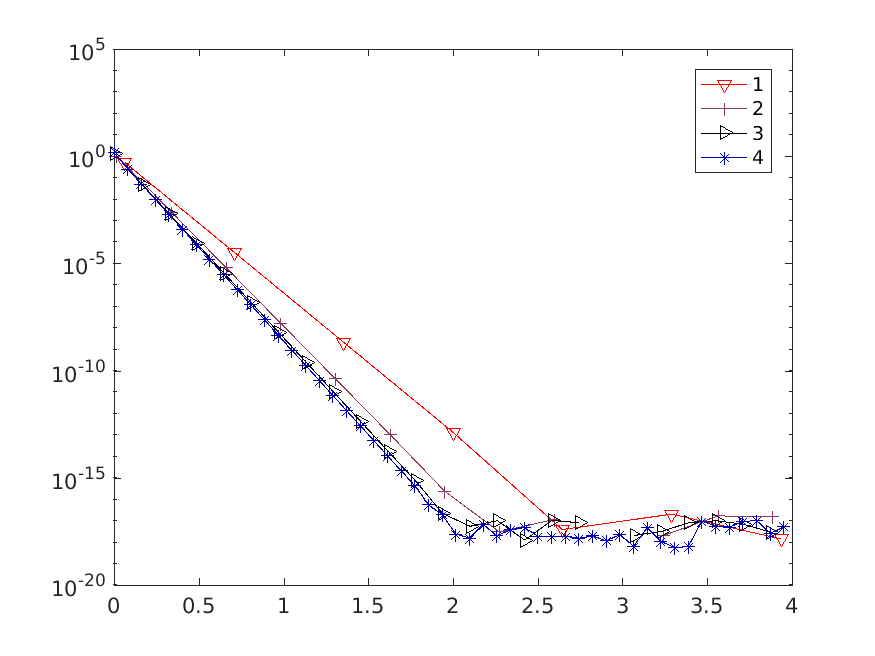}
\end{center}
\caption{Discrete relative energy ${\mathbb E}_\T^{n}-{\mathbb E}_\T^{\infty}$
 as a function of $n\Delta t$ computed on the  first four Kershaw meshes (on the left) and on the  first four quadrangle meshes (on the right).
}
\label{fig:EntRel}
\end{figure}

\appendix

\section{A trace inequality}

First, to a given vector $\ut=\left(\left(\uk\right)_{\petitk\in{\overline \M}},\left(\uke\right)_{\petitke\in{\overline \Mie}}\right) \in\Rt$
 defined on a DDFV mesh $\T$, we associate its {\em primal trace} $\gamma_{\p\M} \ut $ on $\p\O$ defined by 
\begin{equation*}\label{defuhbord}
 \gamma_{\p\M} \ut(\x)=\sumpriext\ul\boldsymbol{1}_{\l}(\x), \qquad \forall \x \in \p\O.
\end{equation*}
\begin{thm}[Trace inequality]\label{thmtracebis}
Let $\O$ be a convex polygonal domain of  $\R^2$ and $\T$ a DDFV mesh of this domain.
There exist $C>0$, depending only on $\O$ and $\theta^\star$, such that $\forall\ \ut \in\Rt $:
\begin{equation}\label{traceine}
 \|\gamma_{\p\M} \ut\|_{2,{\dO}}\leq C 
\left(|\ut|_{2,\T}+ \left\|\nabla^h u_h\right\|_{2}\right).
\end{equation}
\end{thm}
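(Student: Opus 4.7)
The plan is to mimic the classical continuous proof of the trace inequality for a convex domain, whose kernel is the radial field $\Phi(x) = x - x_0$: by convexity of $\O$, for any $x_0 \in \O$ there exists $\rho_0 > 0$ depending only on $\O$ and $x_0$ such that $(x-x_0)\cdot n(x) \ge \rho_0$ for a.e. $x \in \p\O$. I would start from
$$
\rho_0 \|\gamma_{\p\M}\ut\|_{2,\p\O}^2
\;=\; \rho_0 \sum_{L \in \p\M} m_\sigma u_L^2
\;\le\; \sum_{L \in \p\M} m_\sigma u_L^2 \,(x_\sigma-x_0)\cdot n_\sigma,
$$
where $x_\sigma$ is the midpoint of $\sigma$, and then reduce the boundary sum to an interior one.

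My preferred route is the \emph{cone decomposition}: for each boundary edge $\sigma$, the open cone $C_\sigma$ with apex $x_0$ and base $\sigma$ has area $|C_\sigma| = \tfrac12 m_\sigma (x_\sigma-x_0)\cdot n_\sigma$, and the family $\{C_\sigma\}_{\sigma\subset\p\O}$ partitions $\O$ up to a null set. For any $y \in C_\sigma$, write $u_L = u_{h,\M}(y) + (u_L - u_{h,\M}(y))$, square, integrate in $y$, and sum over $\sigma$ to get
$$
\tfrac{\rho_0}{2}\sum_{L \in \p\M} m_\sigma u_L^2
\;\le\; 2\int_\O u_{h,\M}(y)^2\, dy \;+\; 2\sum_{\sigma\subset\p\O}\int_{C_\sigma}\!\bigl(u_L - u_{h,\M}(y)\bigr)^2 dy.
$$
The first term equals $2\sum_{K\in\M} m_K u_K^2 \le 4\,|\ut|_{2,\T}^2$ by definition of $u_{h,\M}$ and of $|\cdot|_{2,\T}$. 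For the second, I would write $u_L - u_{h,\M}(y)$ as the telescoping sum of primal jumps $u_{K_i}-u_{K_{i-1}}$ along the (finitely many) primal edges $\sigma_i$ crossed by the segment $[y, x_\sigma]$, bound each jump via the diamond gradient on $\Ds_{\sigma_i}$, apply Cauchy--Schwarz along the segment, and then use Fubini to obtain a bound of order $C\,\mathrm{diam}(\O)\,\|\nabla^h u_h\|_2^2$, with $C$ depending only on $\theta^\star$.

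The main obstacle is precisely the primal-jump estimate $|u_L - u_K| \le C(\theta^\star)\,m_{\sigma^\ast}\,|\nabla^\Ds u_\T|$. The DDFV gradient $\nabla^\Ds u_\T$ mixes the two jumps $(u_L-u_K, u_{L^*}-u_{K^*})$ through a $2\times 2$ linear system encoded by $\bbA^\Ds$, and on non-orthogonal meshes one cannot bound the primal jump alone by the normal component of $\nabla^\Ds u_\T$. The way out is to invert that linear system and use the bound \eqref{eq:condAD} on the condition number of $\bbA^\Ds$, which is controlled uniformly by $\theta^\star$ thanks to the regularity assumption \eqref{eq:theta_star}; this gives $|u_L-u_K|^2 + |u_{L^*}-u_{K^*}|^2 \lesssim m_\Ds\,|\nabla^\Ds u_\T|^2$, from which the primal jump bound follows. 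Once this estimate is in hand, combining all the pieces and dividing by $\rho_0$ produces \eqref{traceine} with a constant $C$ depending only on $\O$ and $\theta^\star$.
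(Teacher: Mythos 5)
Your route is genuinely different from the paper's. The paper does not use convexity through a radial multiplier at all: it first invokes the parallel-chord argument of \cite[Lemma 10.5]{EGH00} and \cite[Theorem 7.1]{CKM15}, adapted to the $L^2$ norm, to bound $\sumdiamext\msig|\uk|^2$ where $\k$ is the \emph{interior} primal cell adjacent to each boundary edge, and only then passes from $\uk$ to the boundary value $\ul$ through the exact identity $\ul-\uk=\msige\,\gradD\ut\cdot\tkl$. Your cone decomposition from an interior point $x_0$ targets $\ul$ directly and gives a more self-contained geometric argument, at the price of using convexity in an essential way (the chord argument of \cite{EGH00} works on general polygonal domains); the reduction of the boundary sum to $2\sum_\sigma |C_\sigma| u_L^2$ and the treatment of the zeroth-order term are correct.

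Two caveats on the substance. First, what you single out as ``the main obstacle'' is not one: the DDFV gradient satisfies the two tangential identities $\gradD\ut\cdot\tkl=(\ul-\uk)/\msige$ and $\gradD\ut\cdot\tkele=(\ule-\uke)/\msig$, so $|\ul-\uk|\le\msige|\gradD\ut|$ holds immediately, and $\msige^2\le 4(\theta^\star)^2\md$ by \eqref{eq:thetaD}--\eqref{eq:theta_star}. This is precisely the identity the paper uses in its final display. Your detour through the inverse of $\bbA^\Ds$ and the condition-number bound \eqref{eq:condAD} does reach the same conclusion, but it drags $\lambda_m$ and $\lambda^M$ into the constant, whereas the statement asserts dependence on $\O$ and $\theta^\star$ only. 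Second, the genuinely delicate step in your plan is the one you dispatch in a single sentence: the Fubini/transversality count for the telescoping sum along the segments $[y,x_\sigma]$. Unlike the parallel chords used in \cite{EGH00}, your segments all converge to the apex $x_\sigma$, so the diamonds near $x_\sigma$ are crossed by a set of $y$ whose measure is comparable to $|C_\sigma|$ rather than to the diamond's own width; the Cauchy--Schwarz weights along each segment must then be chosen (for instance proportional to the chord length in each crossed cell) so that the resulting angular integrals remain summable with a constant depending only on $\O$ and $\theta^\star$. This can be carried out, but it is where the real work of the proof lies, and the proposal does not demonstrate it.
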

\begin{proof}
 The calculations are similar to those followed in \cite[Lemma  10.5]{EGH00} and in \cite[Theorem 7.1]{CKM15} for $L^1$-norm.
 The difference comes from the fact that here we define $\udM $ using the boundary primal mesh instead of the interior primal mesh.
 Adapting the proof of \cite[Theorem 7.1]{CKM15} in the $L^2$-norm, we get for $\k\in\M$ such that $\overline{\k}\cap\dO\ne0$ the inequality
 \[
  \sumdiamext\msig|\uk|^2\leq C 
\left(|\ut|_{2,\T}^2+ \left\|\nabla^h u_h\right\|_{2}^2\right).
 \]
 It implies
 \[
 \begin{aligned}
     \|\gamma_{\p\M} \ut\|_{2,{\dO}}^2&=\sumdiamext\msig |\ul-\uk+\uk|^2\\
     &\leq 2\sumdiamext\msig |\ul-\uk|^2+2 \sumdiamext\msig|\uk|^2\\
     &\leq 2\sumdiamext\msig |\ul-\uk|^2+ C 
\left(|\ut|_{2,\T}^2+ \left\|\nabla^h u_h\right\|_{2}^2\right).
 \end{aligned}
 \]
 Using the fact that $ \ul-\uk=\msige(\gradD\ut)\cdot\tkl$, we conclude 
 \[
  \|\gamma_{\p\M} \ut\|_{2,{\dO}}^2\leq 
C\left(|\ut|_{2,\T}^2+ (1+{\rm size}(\T))\left\|\nabla^h u_h\right\|_{2}^2\right).
 \]
\end{proof}

\bibliographystyle{plain}
\bibliography{biblio_CCK}

\end{document}